\pdfoutput=1
\documentclass[mathematics,article,accept,pdftex,oneauthor]{Definitions/mdpi} 
\firstpage{1} 
\pubvolume{1}
\issuenum{1}
\articlenumber{0}
\pubyear{2025}
\copyrightyear{2025}
\externaleditor{Adara M. Blaga} 
\datereceived{1 July 2025} 
\daterevised{29 July 2025} 
\dateaccepted{1 August 2025} 
\datepublished{ } 
\hreflink{https://doi.org/} 

\Title{Advanced Manifold--Metric Pairs} 

\TitleCitation{Advanced Manifold--Metric Pairs}

\Author{Pierros 
Ntelis}

        \AuthorCitation{Ntelis, P.}

\address [1]{%
Independent Researcher, 
13 Allee Turcat Mery, 13008 Marseille, France; ntelis.pierros@gmail.com}

\abstract{
This article presents a novel mathematical formalism for advanced manifold--metric pairs, enhancing the frameworks of geometry and topology. We construct various D-dimensional manifolds and their associated metric spaces using functional methods, with a focus on integrating concepts from mathematical physics, field theory, topology, algebra, probability, and statistics. Our methodology employs rigorous mathematical construction proofs and logical foundations to develop generalized manifold--metric pairs, including homogeneous and isotropic expanding manifolds, as well as probabilistic and entropic variants. Key results include the establishment of metrizability for topological manifolds via the Urysohn Metrization Theorem, the formulation of higher-rank tensor metrics, and the exploration of complex and quaternionic codomains with applications to cosmological models like the expanding spacetime. By combining spacetime generalized sets with information-theoretic and probabilistic approaches, we achieve a unified framework that advances the understanding of manifold--metric interactions and their physical implications.
}

\keyword{mathematics; mathematical physics; manifolds; metrics; algebra; topology; probabilities; statistics; cosmology \\
 MSC Classification: 00-XX, General Topology (54-XX), 
Mathematical logic and foundations (03-XX), 
Differential Geometry (53-XX), 
Functional Analysis (46-XX),  
Global analysis on manifolds (58-XX), 
Mathematical Physics/Relativity and gravitational theory (83-XX), Differential geometry of submanifolds of Möbius space (53A31). }

\MSC{00-XX, 54-XX, 03-XX, 53-XX, 46-XX, 58-XX, 83-XX, 53A31}

\begin{document}
%
%
\tableofcontents

\section{Introduction}
\label{Introduction}

{Currently
, natural systems, such as those in physics and cosmology, are often modelled using algebraic and topological frameworks, including subfields like differential geometry. Recognizing that a space is a mathematical structure of arbitrary finite dimension, where objects have relative positions, we seek to explore its abstract properties, such as generalized metrics or probabilistic structures. In~differential geometry, a~manifold \(\mathcal{M}\) is a topological space that is locally Euclidean and equipped with properties like smoothness, which we generalize by associating with non-traditional metrics, $m$, such as higher-rank tensors}.

A manifold is essentially a space that resembles locally a Euclidean space in the sense that $\mathcal{M}$ is covered by some arbitrary coordinate patches. These kinds of structures allow differentiation to be defined, but~they do not distinguish intrinsically between different coordinate systems. In~turn, the~only concepts defined by the manifold structure are those that are independent of the choice of a coordinate system. This framework is rooted in differential geometry, algebraic geometry, and topology, where manifolds are equipped with a differentiable structure, enabling the definition of tangent spaces, tensor fields, and~metrics~\cite{Lee:2012, Wald:1984}. Furthermore, analysis on manifolds provides tools to study smooth functions and their properties, such as differentiability and integrability, which are essential for our generalization~\cite{Munkres:1991, Spivak:1965}. To~build such {novel} objects, we need tensors, and/or advanced generalized tensors~\cite{sym17050777}. {Advanced generalized tensors are novel generalized tensors that are defined using not only objects from standard calculus, but~also objects from fractional calculus~\cite{axioms11020043}.} 
In this manuscript, we will focus on the tensors that have already been explored. Note that manifolds can also be viewed from the perspective of category theory~\cite{sym17050777}. A~formal formulation can be found in \citet{hawking_ellis_1973}. However, in~this study, the former formulation is~sufficient. 

The motivation of our studies lies in the fact that, as a mathematics and physics community, we are interested in building higher and more sophisticated mathematical and theoretical physics structures {than the ones we currently have,} with the goal of understanding our universe at a deeper level.{ Therefore, this study enriches recent developments of manifold--metric pair constructions~\cite{math13060990}.}

Furthermore, a~plethora of metrics exists, which we can construct beyond the standard ones, starting from the simplest ones classified by \citet{bianchi1897sugli}, known as the Bianchi classification. These metrics are typically defined on Riemannian or pseudo-Riemannian manifolds, where properties such as symmetry and positive-definiteness (for Riemannian metrics) are well-established~\cite{ONeill:1983}. Interestingly, several studies are currently exploring the possibility that one of these new metrics possesses the property of extra dimensions. Notably,\linebreak  \citet{polyakov1981quantum,Deser:1976eh} have considered theories that include the ones discussed in a string theory framework. An~easy way to express this argument is a set of different types of dimensions; there is the extra dimension set, as well as a spectrum of dimensions, which can be represented in~a continuous manner, such as a probabilistic set of~dimensions.  

{In this context, the~\textit{{manifold dimension}} 
refers to the number of independent coordinates defining \(\mathcal{M}\), typically a fixed integer \(D\) \cite{Lee2018}. \textit{{Extra dimensions}} extend this concept to higher-dimensional manifolds, such as the 11-dimensional manifolds considered in cosmological models. \textit{{Spectrum dimensions}} arise from spectral properties of operators on \(\mathcal{M}\), such as eigenvalues of the metric tensor, which may characterize geometric or physical properties. \textit{Probabilistic sets of dimension} incorporate stochastic structures, where dimensions are defined via probabilistic measures or entropic quantities, as~explored in our generalized metric framework with higher-rank tensors, such as the $(0,3)$-tensor. These concepts are interconnected, as~extra dimensions expand the manifold’s structure, spectrum dimensions provide analytical tools, and~probabilistic dimensions introduce stochastic generalizations, collectively enabling novel manifold--metric pairs for applications in cosmology \linebreak  and beyond.} 

{We designate a functional tensor as a generalized metric because it extends the role of standard metrics in measuring geometric properties on the manifold $\mathcal{M}$, enabling the modeling of higher-order interactions, such as those in multiple geometric spaces, and~topological spaces, probabilistic or entropic spacetimes, which are critical for applications in cosmology and theoretical physics. Like standard pseudo-Riemannian metrics, such tensors are symmetric and define geometric structures, but~their trilinear or~multilinear forms allow for richer interactions beyond the bilinear framework of traditional metrics~\cite{Lee2018, ONeill1983, Kunzinger2001}.}

In fact, even probabilistic concepts in gravitational theories have not been thoroughly studied in science, yet there is significant interest in the subject~\cite{cho1996fixed,2017arXiv171206127K,article_Bailleul_Ismael,2018arXiv180100581B}. In~1996,\linebreak   \citet{cho1996fixed} studied several properties of probabilistic metric spaces, while in 2018,\linebreak   \citet{2018arXiv180100581B} introduced and studied the concepts of probabilistic 1-Lipschitz maps. The~latter authors used the space of all probabilistic 1-Lipschitz maps to present a new method for constructing a probabilistic metric completion. In~particular, they expanded the concepts of probabilistic invariant metric group completion. Proving that the space of all probabilistic 1-Lipschitz maps is defined on a probabilistic invariant metric group, then the latter group can be endowed with a semigroup structure. This probabilistic invariant completes Menger groups, which were characterized by the space of all probabilistic 1-Lipschitz maps and which map functionals in the spirit of the classical Banach--Stone theorem~\cite{2018arXiv180100581B}. Fast forward in time, the~mathematical structure of manifold--metric pairs is still studied under different Riemannian and non-Riemannian geometries~\cite{CANTATA:2021ktz,Capozziello:2022zzh}.

It is worth noting that we have some mild, yet important, evidence for probabilistic dimensions, as~we can observe by the limits on the number of dimensions $D$ of the spacetime continuum, which has been measured to be $D\simeq 4\pm0.1$ from gravitational wave estimates~\cite{2018JCAP...07..048P}. These results suggest not only that our standard paradigm, which is described with {$(1,3)$}
-dimensional manifolds, is valid, but~also that there is further room for improvements and generalization of these concepts. These findings result in an inaugural interest in concepts of probabilistic spaces in the literature, which motivates us to explore further these probabilistic concepts and possibly apply them to $D$-spaces and $D$-dimensional manifolds. 
Note that these theories will be tested using current and future cosmological experiments with novel methods such as those found by \citet{2021arXiv210610278P}. \linebreak  These advanced manifold--metric pairs can potentially be studied in the framework of functors of action theories~\cite{NtelisMorris_FA,Ntelis2024} and in the framework of analytical dynamical analyses, as carried out in previous studies~\cite{Ntelis_Said:2025a,Ntelis_Said:2025b,Ntelis_Said:2025d}. Note that as in the \citet{sym17050777} companion paper, we present some applications. In~this study, we go beyond the concepts of generalization of functional metrics studied by \citet{2018arXiv180300240J}.

\section{Advanced Manifold--Metric Space~Pairs}

To interpret experimental data across scales, from~astronomical to cosmological, physical systems are typically described using differential equations on {(1,3)-dimensional manifolds (i.e., one temporal and three spatial dimensions)} with corresponding metrics. These manifolds, often equipped with pseudo-Riemannian metrics of the Lorentzian signature, model phenomena in general relativity and cosmology. In~this study, we generalize the concepts of manifold and metric spaces to accommodate higher-rank tensors, functional dependencies, and~extended codomains, enabling broader applications in theoretical physics. We formalize this generalization through a theorem and supporting propositions, systematically constructing a generalized manifold--metric pair from the standard~framework.

{
\subsection*{Generalized 
Manifold--Metric Pair~Theorem}
\label{sec:Advancedmanifold-metric_pairs}

\begin{Theorem}[Generalized Manifold--Metric Pair]
Let $\mathcal{M}^D$ be a $D$-dimensional differentiable manifold. There exists a generalized manifold--metric pair $(\mathcal{M}_G, \mathcal{F})$, where $\mathcal{M}_G$ is a manifold supporting a $(U,L)$-rank functional tensor $\mathcal{F}_{\mu_1 \dots \mu_L}^{\nu_1 \dots \nu_U}[c, f(c)] : C_{\mu_1} \otimes \dots \otimes C_{\nu_U} \to \mathbb{Q}$, with~$C_{\alpha_i}$ being $D$-dimensional coordinate spaces, $c \in C$, $f(c) : C \to \mathbb{R}$ a smooth function, and~$\mathbb{Q}$ the quaternion field. The~corresponding line element is as follows:
\begin{equation}
    ds^{U+L} = \mathcal{F}_{\mu_1 \dots \mu_L}^{\nu_1 \dots \nu_U}[c, f(c)] dc^{\mu_1} \dots dc^{\mu_L} dc_{\nu_1} \dots dc_{\nu_U},
\end{equation}
where $dc_{\nu_i} = g_{\nu_i \lambda} dc^\lambda$ for a symmetric, positive-definite (0,2)-rank tensor $g_{\mu\nu}$. The~tensor $\mathcal{F}$ transforms as a $(U,L)$-tensor under coordinate changes and may be symmetric in its covariant and contravariant indices separately, ensuring a well-defined distance function.
\end{Theorem}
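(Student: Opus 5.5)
The statement is an existence claim, so I will prove it by explicit construction. I start from the given differentiable manifold $\mathcal{M}^D$ and build the pair $(\mathcal{M}_G,\mathcal{F})$ in stages, checking the required properties at each stage.

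\textbf{Step 1 (Riemannian metric).} I take $\mathcal{M}_G=\mathcal{M}^D$ with its smooth atlas. I assume $\mathcal{M}^D$ is Hausdorff and second countable, so smooth partitions of unity exist. Patching the local Euclidean metrics $\delta_{\mu\nu}$ on coordinate charts then gives a global symmetric, positive-definite $(0,2)$-tensor $g_{\mu\nu}$. This is the tensor used to lower indices, $dc_{\nu_i}=g_{\nu_i\lambda}dc^\lambda$. Positive-definiteness makes $g$ invertible, so raising and lowering indices is well defined. As a by-product, $g$ induces a distance function compatible with the manifold topology, which matches the Urysohn metrizability result mentioned in the abstract.

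\textbf{Step 2 (real-valued functional tensor).} Given a smooth $f:C\to\mathbb{R}$, I define a candidate
\begin{equation}
\mathcal{F}^{(0)}{}_{\mu_1\dots\mu_L}^{\nu_1\dots\nu_U}[c,f(c)] = e^{f(c)}\, g_{(\mu_1\mu_2}\cdots\, \delta^{(\nu_1}_{\ \ \mu_i}\cdots ,
\end{equation}
that is, a conformal factor $e^{f}$ times a symmetrized product of $g$'s and Kronecker deltas. Alternatively, I can simply take $e^{f}$ times the symmetrized tensor product of $g$ (if $L$ is even) with the $\delta$'s. The point is that a scalar built from $f$ times a tensor built from $g$ is automatically a smooth $(U,L)$-tensor field. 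Its transformation law under a change of coordinates $c\mapsto c'$ then follows from the standard Jacobian rule applied factor by factor. Symmetrizing over the lower indices and, separately, over the upper indices gives the separate symmetry required in the statement.

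\textbf{Step 3 (quaternionic codomain).} I fix a unit quaternion $q\in\mathbb{Q}$ and set $\mathcal{F}=q\,\mathcal{F}^{(0)}$. Alternatively, I can write $\mathcal{F}=\mathcal{F}_0+\mathcal{F}_1 i+\mathcal{F}_2 j+\mathcal{F}_3 k$, with each $\mathcal{F}_a$ a real tensor of the type built in Step 2. Because the coordinate Jacobians are real, they commute with $i,j,k$. Hence the tensor transformation law holds componentwise, and $\mathcal{F}$ is a $\mathbb{Q}$-valued multilinear map on $C_{\mu_1}\otimes\cdots\otimes C_{\nu_U}$. Contracting with $dc^{\mu}$ and $dc_\nu$ gives the line element $ds^{U+L}$. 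This contraction is a scalar, so it is invariant under coordinate changes.

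\textbf{Step 4 (distance, the main obstacle).} The difficulty is the phrase ``ensuring a well-defined distance function''. A quaternion-valued $ds^{U+L}$ does not directly give a real length. My plan is to define the length of a curve $\gamma$ by
\begin{equation}
\ell(\gamma)=\int \big|\mathcal{F}(\dot\gamma,\dots,\dot\gamma)\big|^{1/(U+L)}dt ,
\end{equation}
and then set $d(x,y)=\inf_\gamma \ell(\gamma)$. With the choice in Steps 2--3 the integrand equals $e^{f/(U+L)}|g(\dot\gamma,\dot\gamma)|^{1/2}$, up to a positive combinatorial constant. So $d$ is the distance of the conformally rescaled Riemannian metric $e^{2f/(U+L)}g$. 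For that metric, positivity, symmetry, and the triangle inequality follow from the standard Riemannian theory. For more general $\mathcal{F}$, the same argument requires the homogeneous form $v\mapsto|\mathcal{F}(v,\dots,v)|^{1/(U+L)}$ to be positive and comparable to $|v|_g$ on compact sets. I would state this as the hypothesis under which the distance is well defined.
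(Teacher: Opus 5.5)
Your route differs from the paper's. The paper's proof is an abstract chain of three propositions: raise the rank to $(U,L)$, let the tensor depend on $f$, and enlarge the codomain to $\mathbb{Q}$. It never exhibits a specific $\mathcal{F}$ and never checks the distance clause. You instead build everything explicitly. When $U+L$ is even your argument is correct: the integrand really is $e^{f/(U+L)}|g(\dot\gamma,\dot\gamma)|^{1/2}$, and $d$ is the distance of the conformal metric $e^{2f/(U+L)}g$.

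The gap is the case $U+L$ odd. The statement covers it, and it contains the paper's own $(0,3)$ and $(0,7)$ examples. Each of $g_{\mu\nu}$, $g^{\mu\nu}$ and $\delta^{\nu}_{\mu}$ carries two indices, and contraction removes indices in pairs. So no tensor built from them has an odd number of free indices. Your Step 2 therefore produces no tensor, and the Step 4 computation has nothing to apply to.

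This is not merely a defect of your ansatz. Put $n=U+L$ and $\Phi_p(v)=\mathcal{F}_p(v,\dots,v,v^\flat,\dots,v^\flat)$. For odd $n$, $\Phi_p(-v)=-\Phi_p(v)$.
\begin{itemize}
\item If $\mathcal{F}$ is real-valued and $D\ge 2$, the intermediate value theorem on the connected sphere $S^{D-1}$ gives a nonzero $v$ with $\Phi_p(v)=0$.
\item If $\mathcal{F}$ is $\mathbb{Q}\cong\mathbb{R}^4$-valued, Borsuk--Ulam gives the same conclusion whenever $D\ge 5$.
\end{itemize}
So the positivity hypothesis you formulate in Step 4 cannot be satisfied in these cases. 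The inf-length ``distance'' can then genuinely degenerate: for the paper's $ds^3=dx^3+dy^3$, the straight segment from $(0,0)$ to $(1,-1)$ has length zero.

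You should state explicitly that your proof covers only even $U+L$, and treat the odd case separately. There, the existence of a smooth, separately symmetric, $\mathbb{Q}$-valued $(U,L)$-tensor with an invariant $ds^{U+L}$ is easy, for example by building it from $g$ and $df$. However, the ``well-defined distance'' clause cannot come from your Step 4 mechanism. For $D\ge 5$ it cannot come from any pointwise-positive integrand at all.

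For $D\le 4$ on a parallelizable manifold it can be rescued. Take $\sum_a e_a(\theta^a)^{\otimes n}$ with $e_a\in\{1,i,j,k\}$ and a global coframe $\theta^a$. Its value on $(v,\dots,v)$, raised to the power $1/n$ in modulus, equals the $\ell^{2n}$-norm of $(\theta^a(v))_a$, which is a genuine norm.
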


\begin{proof}
We construct 
the generalized manifold--metric pair $(\mathcal{M}_G, \mathcal{F})$ from the standard pair $(\mathcal{M}, m)$ through a series of propositions, each addressing a key step: extending the tensor rank, functionalizing the tensor, and~generalizing the~codomain.

\begin{Proposition}[Extension to (U,L)-Rank Tensor]
A $D$-dimensional differentiable manifold $\mathcal{M}$ with a standard manifold--metric pair $(\mathcal{M}, m_{\mu\nu} : C \otimes C \to \mathbb{R})$, where $m_{\mu\nu}$ is a symmetric (0,2)-tensor, can be generalized to a {proposition} pair $(\mathcal{M}, m_{\mu_1 \dots \mu_L}^{\nu_1 \dots \nu_U} : C_{\mu_1} \otimes \dots \otimes C_{\nu_U} \to \mathbb{R})$, where $m_{\mu_1 \dots \mu_L}^{\nu_1 \dots \nu_U}$ is a $(U,L)$-rank tensor.
\end{Proposition}

\begin{proof}
Consider a standard manifold--metric pair:
\begin{equation}
    (\mathcal{M}, m) \equiv (\mathcal{M}, m_{\mu\nu} : C \otimes C \rightarrow \mathbb{R}),
\end{equation}
where $\mathcal{M}$ is a $D$-dimensional differentiable manifold, $C$ is the $D$-dimensional coordinate space with coordinates $c^\mu$, and~$m_{\mu\nu}$ is a symmetric (0,2)-tensor, defining the line element as follows:
\begin{equation}
    ds^2 = m_{\mu\nu}(c) dc^\mu dc^\nu.
\end{equation}

{The} 
 metric $m_{\mu\nu}$ is symmetric ($m_{\mu\nu} = m_{\nu\mu}$) and transforms under coordinate changes $c^\mu \to c^{\mu'} = f^{\mu'}(c)$ as follows:
\begin{equation}
    m_{\mu'\nu'}(c') = \frac{\partial c^\rho}{\partial c^{\mu'}} \frac{\partial c^\sigma}{\partial c^{\nu'}} m_{\rho\sigma}(c),
\end{equation}
ensuring invariance of $ds^2$ \cite{Wald:1984}.

Define coordinate spaces $C_{\alpha_i}$, each a copy of $C$, for~$\alpha_i \in \{\mu_1, \dots, \mu_L, \nu_1, \dots, \nu_U\}$. Construct the tensor product spaces as follows:
\begin{align}
    C_L &:= C_{\mu_1} \otimes C_{\mu_2} \otimes \dots \otimes C_{\mu_L}, \\
    C_U &:= C_{\nu_1} \otimes C_{\nu_2} \otimes \dots \otimes C_{\nu_U},
\end{align}
forming $C_L \otimes C_U = C_{\mu_1} \otimes \dots \otimes C_{\nu_U}$. Define a $(U,L)$-rank tensor $m_{\mu_1 \dots \mu_L}^{\nu_1 \dots \nu_U} : C_L \otimes C_U \to \mathbb{R}$, which transforms as follows:
\begin{equation}
    m_{\mu_1' \dots \mu_L'}^{\nu_1' \dots \nu_U'} = \left( \prod_{i=1}^U \frac{\partial c^{\nu_i'}}{\partial c^{\rho_i}} \right) \left( \prod_{j=1}^L \frac{\partial c^{\sigma_j}}{\partial c^{\mu_j'}} \right) m_{\sigma_1 \dots \sigma_L}^{\rho_1 \dots \rho_U}.
\end{equation}

{This} tensor preserves the differentiable structure of $\mathcal{M}$ \cite{Lee:2012}. The~manifold-metric \linebreak  pair becomes the following:
\begin{equation}
    (\mathcal{M}, m) \equiv \left( \mathcal{M}, m_{\mu_1 \dots \mu_L}^{\nu_1 \dots \nu_U} : C_{\mu_1} \otimes \dots \otimes C_{\nu_U} \rightarrow \mathbb{R} \right).
\end{equation}

{Symmetry} in indices (e.g., $m_{\mu_1 \dots \mu_L}^{\nu_1 \dots \nu_U} = m_{\mu_{\pi(1)} \dots \mu_{\pi(L)}}^{\nu_1 \dots \nu_U}$ for permutations $\pi$) may be imposed \linebreak  as needed.
\end{proof}

\begin{Proposition}[Functional Tensor Generalization]
The $(U,L)$-rank tensor $m_{\mu_1 \dots \mu_L}^{\nu_1 \dots \nu_U}$ can be promoted to a functional tensor $\mathcal{F}_{\mu_1 \dots \mu_L}^{\nu_1 \dots \nu_U}[c, f(c)]$, where $f(c) : C \to \mathbb{R}$ is a smooth function, mapping $C_{\mu_1} \otimes \dots \otimes C_{\nu_U} \to \mathbb{R}$.
\end{Proposition}

\begin{proof}
Start with the pair $(\mathcal{M}, m_{\mu_1 \dots \mu_L}^{\nu_1 \dots \nu_U} : C_{\mu_1} \otimes \dots \otimes C_{\nu_U} \to \mathbb{R})$. Introduce a smooth function $f(c) : C \to \mathbb{R}$, where $c \in C$. Define the functional tensor $\mathcal{F}_{\mu_1 \dots \mu_L}^{\nu_1 \dots \nu_U}[c, f(c)]$, which depends on both coordinates $c$ and the function $f(c)$. The~tensor $\mathcal{F}$ inherits the transformation properties of $m_{\mu_1 \dots \mu_L}^{\nu_1 \dots \nu_U}$:
\begin{equation}
    \mathcal{F}_{\mu_1' \dots \mu_L'}^{\nu_1' \dots \nu_U'} = \left( \prod_{i=1}^U \frac{\partial c^{\nu_i'}}{\partial c^{\rho_i}} \right) \left( \prod_{j=1}^L \frac{\partial c^{\sigma_j}}{\partial c^{\mu_j'}} \right) \mathcal{F}_{\sigma_1 \dots \sigma_L}^{\rho_1 \dots \rho_U},
\end{equation}
adjusted for the functional dependence on $f(c)$, which transforms appropriately under coordinate changes~\cite{Spivak:1965}. The~smoothness of $f(c)$ ensures that $\mathcal{F}$ is differentiable, supporting a well-defined geometric structure on $\mathcal{M}$.
\end{proof}

\begin{Proposition}[Codomain Generalization to Quaternions]
The codomain of the functional tensor $\mathcal{F}_{\mu_1 \dots \mu_L}^{\nu_1 \dots \nu_U}[c, f(c)]$ can be extended from $\mathbb{R}$ to the quaternion field $\mathbb{Q}$, defining a generalized manifold--metric pair $(\mathcal{M}_G, \mathcal{F})$.
\end{Proposition}

\begin{proof}
The functional tensor $\mathcal{F}_{\mu_1 \dots \mu_L}^{\nu_1 \dots \nu_U}[c, f(c)] : C_{\mu_1} \otimes \dots \otimes C_{\nu_U} \to \mathbb{R}$ can be generalized to map to $\mathbb{Q}$, the~quaternion field, which includes real and complex numbers as subsets. Define the generalized manifold $\mathcal{M}_G$, which may include additional structure (e.g., a~bundle or algebraic structure) to support quaternionic outputs~\cite{Adler:1995}. The~pair is as follows:
\begin{equation}
    (\mathcal{M}_G, \mathcal{F}) \equiv \left( \mathcal{M}_G, \mathcal{F}_{\mu_1 \dots \mu_L}^{\nu_1 \dots \nu_U}[c, f(c)] : C_{\mu_1} \otimes \dots \otimes C_{\nu_U} \rightarrow \mathbb{Q} \right).
\end{equation}

{The} line element is the following:
\begin{equation}
    ds^{U+L} = \mathcal{F}_{\mu_1 \dots \mu_L}^{\nu_1 \dots \nu_U}[c, f(c)] dc^{\mu_1} \dots dc^{\mu_L} dc_{\nu_1} \dots dc_{\nu_U},
\end{equation}
where $dc_{\nu_i} = g_{\nu_i \lambda} dc^\lambda$ for a symmetric, positive-definite (0,2)-rank tensor $g_{\mu\nu}$. The~quaternionic codomain accommodates non-positive-definite metrics, suitable for generalized physical theories. The~transformation properties of $\mathcal{F}$ ensure invariance of the line element under coordinate changes.
\end{proof}

The generalized manifold--metric pair is constructed by combining Propositions 1--3. Start with the standard pair $(\mathcal{M}, m_{\mu\nu})$. Proposition 1 extends the tensor to a $(U,L)$-rank tensor $m_{\mu_1 \dots \mu_L}^{\nu_1 \dots \nu_U}$. Proposition 2 functionalizes it to $\mathcal{F}_{\mu_1 \dots \mu_L}^{\nu_1 \dots \nu_U}[c, f(c)]$. Proposition 3 extends the codomain to $\mathbb{Q}$, yielding the following:
\begin{equation}\label{eq:Manifold_matric_pair_generalisation}
    (\mathcal{M}_G, \mathcal{F}) \equiv \left( \mathcal{M}_G, \mathcal{F}_{\mu_1 \dots \mu_L}^{\nu_1 \dots \nu_U}[c, f(c)] : C_{\mu_1} \otimes \dots \otimes C_{\nu_U} \rightarrow \mathbb{Q} \right).
\end{equation}

{The} line element
\begin{equation}
    ds^{U+L} = \mathcal{F}_{\mu_1 \dots \mu_L}^{\nu_1 \dots \nu_U}[c, f(c)] dc^{\mu_1} \dots dc^{\mu_L} dc_{\nu_1} \dots dc_{\nu_U}
\end{equation}
is well-defined, with~$\mathcal{F}$ transforming as a $(U,L)$-tensor, and~$g_{\mu\nu}$ ensuring proper handling of covariant differentials. Symmetry in indices and differentiability are preserved, completing the construction.
\end{proof}

\begin{Remark}[Restriction to Real Numbers]
\label{sec:remark_1}
Current physical theories are tested using real numbers, as~observables are typically real-valued. To~ensure testability, the~codomain can be restricted to $\mathbb{R}^+$, simplifying the pair to a measurable metric space. However, complex numbers and quaternions have applications in electronics, wave mechanics, and~field theories, suggesting future observables may involve $\mathbb{C}$ or $\mathbb{Q}$.
\end{Remark}

\begin{Remark}[Beyond Other Generalizations]
\label{sec:remark_3}
This generalization extends beyond families of manifolds such as Euclidean spaces $\mathbb{R}^n$, $n$-spheres $S^n$, $n$-tori $T^n$, real projective spaces $RP^n$, complex projective spaces $CP^n$, quaternionic projective spaces $HP^m$, flag manifolds, Grassmann manifolds, Stiefel manifolds, or~Finsler manifolds~\cite{zbMATH02613491}. Unlike Finsler manifolds, which use a Minkowski functional with specific properties, our functional tensor $\mathcal{F}$ allows arbitrary inputs (e.g., quaternions and functions) and outputs to $\mathbb{Q}$, accommodating diverse manifold structures.
\end{Remark}
}

{
\section{Metrizability~Conditions}
A $D$-dimensional manifold $M$ is metrizable if there exists a metric $d: M \times M \to \mathbb{R}_{\geq 0}$ that induces the manifold's topology. Not all manifolds are metrizable, but~for topological manifolds (Hausdorff, second-countable, and~locally Euclidean), metrizability is guaranteed under certain conditions~\cite{Munkres2000}.

\subsection{Metrizability Conditions in Standard Manifold--Metric~Pairs}
A topological space $M$ is metrizable if and only if it satisfies the following conditions:

\begin{enumerate}
    \item \textbf{Hausdorff}: 
    For any two distinct points $x, y \in M$, there exist disjoint open neighborhoods $U_x$ and $U_y$ such that $x \in U_x$ and $y \in U_y$.
\begin{equation}
    \forall x, y \in M, x \neq y, \exists U_x, U_y \text{ open}, U_x \cap U_y = \emptyset, x \in U_x, y \in U_y.		
	\end{equation}	

    \item \textbf{Second-Countable}: The topology of $M$ has a countable basis, i.e.,~there exists a countable collection $\mathcal{B} = \{B_1, B_2, \dots\}$ of open sets such that every open set in $M$ can be expressed as a union of elements of $\mathcal{B}$.
\begin{equation}
    \exists \mathcal{B} = \{B_n\}_{n \in \mathbb{N}} \text{ such that } \forall U \text{ open in } M, U = \bigcup_{B_i \subseteq U} B_i.		
	\end{equation}		

    \item \textbf{Locally Euclidean}: For a $D$-dimensional manifold, every point $x \in M$ has a neighborhood homeomorphic to an open subset of $\mathbb{R}^D$.
\begin{equation}
    \forall x \in M, \exists U \text{ open}, x \in U, \text{ and } \phi: U \to V \subseteq \mathbb{R}^D \text{ homeomorphism}.		
	\end{equation}			    

    \item \textbf{Paracompact}: Every open cover of $M$ has a locally finite open refinement. That is, for~any open cover $\{U_\alpha\}_{\alpha \in A}$, there exists a locally finite open cover $\{V_i\}_{i \in I}$ such that each $V_i \subseteq U_\alpha$ for some $\alpha$.
\begin{equation}
    \forall \{U_\alpha\}_{\alpha \in A}, \exists \{V_i\}_{i \in I} \text{ locally finite}, \forall i \in I, \exists \alpha \in A, V_i \subseteq U_\alpha.
	\end{equation}		
\end{enumerate}

According to the \textbf{Urysohn Metrization Theorem}, a~second-countable, Hausdorff, and~regular space is metrizable. Since topological manifolds are locally Euclidean, Hausdorff, and~second-countable, they are metrizable~\cite{Munkres2000}. Additionally, smooth manifolds admit a Riemannian metric, ensuring~metrizability.

\subsection{Standard Distance Function for (0,2)-Rank Tensor}
\textls[-15]{For a smooth manifold $M$ satisfying the above conditions, a~(0,2)-rank tensor $g_{ij} dx^i \otimes dx^j$} \linebreak  that is symmetric ($g_{ij} = g_{ji}$) and positive-definite ($g_{ij} v^i v^j > 0$ for all non-zero $v^i \in T_p M$) defines a Riemannian metric. The~infinitesimal distance element is as follows:
\begin{equation}
ds^2 = g_{ij} dx^i dx^j.
	\end{equation}		

The standard distance function between points $p, q \in M$ is as follows:
\begin{equation}
d(p, q) = \inf_{\gamma} \int_a^b \sqrt{g_{\gamma(t)}(\dot{\gamma}(t), \dot{\gamma}(t))} \, dt,
	\end{equation}	
where $\gamma: [a, b] \to M$ is a smooth curve from $\gamma(a) = p$ to $\gamma(b) = q$, and~$g_{\gamma(t)}(\dot{\gamma}(t), \dot{\gamma}(t)) = g_{ij}(\gamma(t)) \dot{\gamma}^i(t) \dot{\gamma}^j(t)$. This distance is non-negative, symmetric, satisfies the triangle inequality, and~induces the manifold’s topology, as~guaranteed by the metrizability conditions and paracompactness, which allows a partition of unity to construct $g_{ij}$.

\section{Metrizability for Generic (U,L)-Rank Tensors}

For a generic (U,L)-rank tensor (with $U$ covariant and $L$ contravariant indices), metrizability requires constructing a distance function that induces the manifold’s topology. For~example,
\begin{itemize}
    \item a (2,0)-rank tensor $g^{ij}$ is the inverse of a (0,2)-rank tensor and can be associated with a Riemannian metric;
    \item tensors of rank (1,1), (0,3), etc., require additional structure to define a distance function equivalent to a (0,2)-rank tensor.
\end{itemize}

\subsection{Additional Structure for (U,L)-Rank Tensors}
To make a (U,L)-rank tensor function as a metric, it must define a distance function that induces the manifold’s topology. The~additional structure required~includes the following:
\begin{enumerate}
    \item \textbf{{A (0,2)-Rank Tensor}}: A symmetric, positive-definite tensor $g_{ij} dx^i \otimes dx^j$ that defines a Riemannian metric.
    \item \textbf{{Tensor Operations}}: Operations such as contractions or inversions to relate the (U,L)-rank tensor to $g_{ij}$ or its inverse $g^{ij}$. For~example,
        \begin{itemize}
            \item for a (2,0)-rank tensor $g^{ij}$, the~additional structure is a (0,2)-rank tensor $g_{ij}$ such that $g^{ik} g_{kj} = \delta^i_j$;
            \item for a (1,1)-rank tensor $T^i_j$, a~(0,2)-rank tensor $g_{ij}$ such that $T^i_j = g^{ik} h_{kj}$ for some tensor $h_{ij}$, or~$T$ preserves $g_{ij}$ as an isometry ($T^i_k g_{ij} T^k_l = g_{jl}$);
            \item for a (0,3)-rank tensor $T_{ijk}$, a~vector field $v^k$ such that $g_{ij} = T_{ijk} v^k$ is symmetric and positive-definite.
        \end{itemize}
    \item \textbf{Symmetry and Positive-Definiteness}: The resulting (0,2)-rank tensor must be symmetric ($g_{ij} = g_{ji}$) and positive-definite ($g(v,v) > 0$ for all non-zero $v \in T_p M$).
    \item \textbf{Smoothness}: All tensors and auxiliary fields (e.g., $v^k$ and $\omega_k$) must vary smoothly \linebreak  over $M$.
\end{enumerate}

{The} manifold must still be Hausdorff, second-countable, and~paracompact to ensure the existence of a global (0,2)-rank tensor via a partition of unity, satisfying the metrizability~conditions.

\subsection{Analysis of (0,3)-Rank Tensor Distance Construction}
Consider a (0,3)-rank tensor $m_{abc}$ defining an infinitesimal element as follows:
\begin{equation}
ds^3 = m_{abc} dx^a dx^b dx^c.
	\end{equation}	

{A} proposed distance function is as follows:
\begin{equation}
d(p, q) = \inf_{\gamma} \int_a^b \left| m_{abc}(\gamma(t)) \dot{\gamma}^a(t) \dot{\gamma}^b(t) \dot{\gamma}^c(t) \right|^{1/3} \, dt,
	\end{equation}	
where the cube root is taken to yield a length-like quantity, and~the absolute value ensures non-negativity. For~example, in~2D with $ds^3_{2D} = dx^3 + dy^3$, the~tensor has components $m_{111} = 1$, $m_{222} = 1$, and~others zero, so along a curve $\gamma(t) = (x(t), y(t))$,
\begin{equation}
d(p, q) = \inf_{\gamma} \int_a^b \left| \dot{x}(t)^3 + \dot{y}(t)^3 \right|^{1/3} \, dt.
	\end{equation}	

{To} function as a metric, the~following additional structure is~required:
\begin{enumerate}
    \item \textbf{{Symmetry}}: The tensor $m_{abc}$ must be symmetric ($m_{abc} = m_{bca} = m_{cab}$) to ensure $d(p, q) = d(q, p)$.
    \item \textbf{{Positive-Definiteness}}: The expression $m_{abc} v^a v^b v^c \geq 0$ for all non-zero $v^a$, with~equality only at $v^a = 0$. In~the 2D example, $\dot{x}^3 + \dot{y}^3$ can be negative, so the absolute value \linebreak  is necessary.
    \item \textbf{Topology Equivalence}: The distance must induce the manifold’s topology, requiring $m_{abc}$ to be compatible with local Euclidean charts.
    \item \textbf{Smoothness}: The tensor $m_{abc}$ must be smooth.
\end{enumerate}

{The} construction can define a metric if these conditions are met, and~the manifold is Hausdorff, second-countable, and~paracompact.

\subsection{Generalized Metrizability for (0,L)-Rank Tensors}
For a (0,L)-rank tensor $m_{i_1 \dots i_L}$ to define a metric on a $D$-dimensional manifold $M$, it must induce a distance function that satisfies the manifold’s topology. The~infinitesimal element is as follows:
\begin{equation}
ds^L = m_{i_1 \dots i_L} dx^{i_1} \dots dx^{i_L},
	\end{equation}	
and the distance function is the following:
\begin{equation}
d(p, q) = \inf_{\gamma} \int_a^b \left| m_{i_1 \dots i_L}(\gamma(t)) \dot{\gamma}^{i_1}(t) \dots \dot{\gamma}^{i_L}(t) \right|^{1/L} \, dt,
	\end{equation}	
where $\gamma: [a, b] \to M$ is a smooth curve from $p$ to $q$, and~the absolute value ensures non-negativity. The~additional structure and conditions required~are as follows:
\begin{enumerate}
    \item \textbf{{Symmetry}}: The tensor $m_{i_1 \dots i_L}$ must be symmetric under all index permutations:
\begin{equation}
        m_{i_{\sigma(1)} \dots i_{\sigma(L)}} = m_{i_1 \dots i_L}, \quad \forall \text{ permutations } \sigma.
	\end{equation}	
    \item \textbf{{Positive-Definiteness}}: For all non-zero vectors $v^i \in T_p M$,
\begin{equation}
        m_{i_1 \dots i_L} v^{i_1} \dots v^{i_L} \geq 0,
	\end{equation}	
        with equality only when $v^i = 0$.
    \item \textbf{Topology Equivalence}: The distance $d(p, q)$ must induce the manifold’s topology, requiring $m_{i_1 \dots i_L}$ to be compatible with local Euclidean charts.
    \item \textbf{Smoothness}: The tensor $m_{i_1 \dots i_L}$ must be smooth over $M$.
    \item \textbf{Auxiliary Structure (Optional)}: If needed, $L-2$ vector fields $v^{k_1}, \dots, v^{k_{L-2}}$ such that
\begin{equation}
        g_{ij} = m_{i j k_1 \dots k_{L-2}} v^{k_1} \dots v^{k_{L-2}},
	\end{equation}	
        where $g_{ij}$ is a symmetric, positive-definite (0,2)-rank tensor that can be used to define a standard Riemannian metric.
    \item \textbf{Manifold Conditions}: The manifold must~be on of the following:
        \begin{itemize}
            \item \textbf{Hausdorff}: To separate points.
            \item \textbf{Second-Countable}: To ensure a countable basis.
            \item \textbf{Paracompact}: To allow a partition of unity for constructing $m_{i_1 \dots i_L}$.
        \end{itemize}
\end{enumerate}

{These} conditions generalize the $(0,3)$-rank case, ensuring the $(0,L)$-rank tensor defines a valid~metric.

\subsection{Generalized Metrization~Theorem}
\begin{Theorem}[Generalized Metrization Theorem for Manifolds]
A $D$-dimensional topological manifold $M$ is metrizable with a (0,2)-rank tensor (Riemannian metric) if and only if it is Hausdorff, second-countable, and~paracompact. The~metric tensor $g = g_{ij} dx^i \otimes dx^j$ is symmetric, positive-definite, and~induces the manifold’s topology via the associated distance function. For~(U,L)-rank tensors, including (0,L)-rank tensors, metrizability requires constructing a distance function, possibly via a (0,2)-rank tensor, using additional structure such as auxiliary fields and \linebreak  tensor operations.
\end{Theorem}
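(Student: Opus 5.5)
We prove the first (Riemannian) part in both directions and then reduce the $(U,L)$-rank part to it. We work with a smooth $D$-dimensional manifold, since a Riemannian metric needs a smooth structure; by Whitney-type results this loses nothing for $D\le 3$, and in general it is the setting of the Generalized Manifold--Metric Pair Theorem.

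\emph{Sufficiency.} Assume $M$ is Hausdorff, second-countable and paracompact.
\begin{enumerate}
\item Take an atlas $\{(U_\alpha,\phi_\alpha)\}$ and a smooth partition of unity $\{\psi_\alpha\}$ subordinate to it, which paracompactness supplies.
\item Define
\begin{equation}
g=\sum_\alpha \psi_\alpha\,\phi_\alpha^*\Big(\sum_i dx^i\otimes dx^i\Big).
\end{equation}
This sum is locally finite, hence smooth. It is symmetric, and it is positive-definite because at each point it is a convex combination of positive-definite forms.
\item Let $d$ be the distance function of the subsection on the standard distance function for $(0,2)$-rank tensors. Non-negativity, symmetry and the triangle inequality follow from reversing and concatenating curves, the latter using piecewise smooth curves.
\item \textbf{Main obstacle: topology equivalence.} Fix $p$ and a chart $\phi$ around $p$ containing a closed coordinate ball $\bar B$. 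By compactness of $\bar B\times S^{D-1}$ there are constants $0<\lambda\le\Lambda$ with
\begin{equation}
\lambda|v|^2\le g_{ij}v^iv^j\le\Lambda|v|^2 \quad\text{on } \bar B .
\end{equation}
Any curve leaving $\bar B$ has $g$-length at least $\sqrt{\lambda}\,r$, where $r$ is the radius of $\bar B$. Hence $d(p,q)>0$ for $q\neq p$, where the Hausdorff property handles $q\notin\bar B$. The same estimate shows that small metric balls and small coordinate balls are nested in each other, so $d$ induces the manifold topology.
\end{enumerate}

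\emph{Necessity.} A metric space is Hausdorff, and it is paracompact by Stone's theorem. Second-countability is the delicate point. A metrizable manifold is locally second-countable, so each connected component is second-countable, since connected, locally Euclidean, metrizable spaces are Lindelöf. An uncountable disjoint union of lines, however, is metrizable but not second-countable. I would therefore state the ``only if'' direction for $M$ with countably many components, or for connected $M$, and note this restriction explicitly as a correction to the statement.

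\emph{Reduction of the $(U,L)$-rank part.} This part is a sufficiency claim, which we reduce to the Riemannian case in two ways.
\begin{itemize}
\item \textbf{Contraction route.} If auxiliary structure is available, for instance vector fields $v^{k_1},\dots,v^{k_{L-2}}$ with
\begin{equation}
g_{ij}=m_{ijk_1\dots k_{L-2}}v^{k_1}\cdots v^{k_{L-2}}
\end{equation}
symmetric and positive-definite, or index lowering and raising via $g$ for mixed ranks, then the first part applies directly.
\item \textbf{Direct route for $(0,L)$ tensors.} Set $F(v)=|m_{i_1\dots i_L}v^{i_1}\cdots v^{i_L}|^{1/L}$, assuming $m$ is smooth and the form is positive-definite in the sense of the earlier subsection. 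Then $F$ is continuous, positive away from zero and positively homogeneous of degree one. The compactness argument of step~4 again gives local two-sided bounds $c|v|\le F(v)\le C|v|$. Therefore the length metric $\inf_\gamma\int F(\dot\gamma)\,dt$ is a genuine distance inducing the topology. This holds even without convexity of $F$, because concatenation alone yields the triangle inequality, and symmetry follows from $F(-v)=F(v)$, which the absolute value guarantees.
\end{itemize}

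I expect the main work to be the uniform local comparison estimates, together with handling the second-countability caveat honestly.
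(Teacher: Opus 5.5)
Your argument is correct, and at the points where the paper's proof is thin it is more careful, but it takes a different route. The paper gets metrizability of the topology from the Urysohn metrization theorem, checking regularity through charts. It then separately builds $g=\sum_i\psi_i g_i$ from a partition of unity and only asserts that the induced distance reproduces the manifold topology. Its converse says Hausdorffness and second-countability are ``properties of metric spaces'' and that paracompactness is needed for partitions of unity.

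You skip Urysohn and obtain metrizability from the Riemannian distance itself, via the uniform bounds $\lambda|v|^2\le g_{ij}v^iv^j\le\Lambda|v|^2$ on a compact coordinate ball. You use the Hausdorff property in the right place: it forces curves leaving $\bar B$ to exit through the coordinate sphere. This needs a smooth structure, which the Urysohn step does not. In return it proves what the theorem actually claims about the distance of $g$. The same estimate also carries over verbatim to the degree-one homogeneous integrand of a $(0,L)$ tensor, which the paper only lists as a set of conditions.

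Your converse is also the sound one. Paracompactness comes from Stone's theorem. Your uncountable disjoint union of lines shows that second-countability is not inherited from a metric, contrary to the paper's claim, unless you restrict to countably many components. The alternative is to build second-countability into the definition of a manifold, as the paper's proof does, which makes that direction trivial.

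Three refinements.
\begin{enumerate}
\item In your direct route, positivity of $m_{i_1\dots i_L}v^{i_1}\cdots v^{i_L}$ for $v\neq0$ is impossible for odd $L$: replace $v$ by $-v$. For $D\ge2$, even the weaker property $F(v)>0$ that you actually use fails, because an odd-degree form restricted to a circle changes sign and so has a nonzero root. In the paper's own example $ds^3=dx^3+dy^3$, the segment $t\mapsto(t,-t)$ has length zero. So the direct route covers only even total rank (after lowering indices with $g$), and for odd rank such as $(0,3)$ you must use the contraction route.
\item For disconnected $M$ the Riemannian distance is $+\infty$ between components, so use $\min(d,1)$ instead.
\item Smoothability of topological manifolds in dimensions $\le3$ is due to Rad\'o and Moise, not Whitney.
\end{enumerate}
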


\subsection{Proof of the Generalized~Theorem}
\begin{proof}
We prove that a $D$-dimensional topological manifold $M$ is metrizable with a (0,2)-rank tensor if it is Hausdorff, second-countable, and~paracompact, and~extends to (0,L)-rank~tensors.

{Suppose} 
$M$ is a $D$-dimensional topological manifold, i.e.,~it is Hausdorff, second-countable, and~locally Euclidean. By the Urysohn metrization theorem, $M$ is second-countable, Hausdorff, and~regular space is metrizable with a distance function $d: M \times M \to \mathbb{R}_{\geq 0}$. To~show regularity, note that manifolds are locally Euclidean, so for any point $x \in M$ and closed set $C \not\ni x$, there exists a chart $(U, \phi)$ with $x \in U$ and $\phi: U \to \mathbb{R}^D$. Since $\mathbb{R}^D$ is regular, we can separate $\phi(x)$ and $\phi(C \cap U)$ with disjoint open sets, which pull back to $M$, ensuring regularity. Thus, $M$ admits a metric $d$.

{For} a smooth manifold, we seek a (0,2)-rank tensor (Riemannian metric). Since $M$ is paracompact, there exists a smooth partition of unity $\{\psi_i\}$ subordinate to a locally finite open cover $\{U_i\}$, where each $U_i$ is homeomorphic to an open subset of $\mathbb{R}^D$. In~each chart $(U_i, \phi_i)$, define a local metric $g_i = \sum_{j=1}^D dx^j \otimes dx^j$ (the Euclidean metric). Using the partition of unity, construct a global metric:
\begin{equation}
g = \sum_i \psi_i g_i.
	\end{equation}	

{Since} $\{\psi_i\}$ is locally finite and $\psi_i \geq 0$ with $\sum_i \psi_i = 1$, and~each $g_i$ is positive-definite, $g$ is a smooth, symmetric, positive-definite (0,2)-rank tensor. The~distance function induced by $g$ is as follows:
\begin{equation}
d(p, q) = \inf_{\gamma} \int_a^b \sqrt{g_{\gamma(t)}(\dot{\gamma}(t), \dot{\gamma}(t))} \, dt,
	\end{equation}	
which generates a topology equivalent to the manifold’s topology, as~the local charts ensure compatibility with Euclidean~distances.

For a (0,L)-rank tensor $m_{i_1 \dots i_L}$, we define the distance as follows:
\begin{equation}
d(p, q) = \inf_{\gamma} \int_a^b \left| m_{i_1 \dots i_L}(\gamma(t)) \dot{\gamma}^{i_1}(t) \dots \dot{\gamma}^{i_L}(t) \right|^{1/L} \, dt.
	\end{equation}	

{This} requires $m_{i_1 \dots i_L}$ to be symmetric, with~$m_{i_1 \dots i_L} v^{i_1} \dots v^{i_L} \geq 0$ for non-zero $v^i$, and~smooth. The~manifold must be Hausdorff, second-countable, and~paracompact. Optionally, $m_{i_1 \dots i_L}$ can be related to a (0,2)-rank tensor $g_{ij}$ via $g_{ij} = m_{i j k_1 \dots k_{L-2}} v^{k_1} \dots v^{k_{L-2}}$, using $L-2$ smooth vector fields. The~construction of $m_{i_1 \dots i_L}$ or $g_{ij}$ uses the partition of unity, relying \linebreak  on paracompactness.

{Conversely}, if~$M$ is metrizable with a (0,2)-rank tensor, it must be Hausdorff and second-countable (as these are properties of metric spaces). Paracompactness is necessary for smooth manifolds to ensure the existence of a partition of unity, which is required to construct the global metric $g$ or $m_{i_1 \dots i_L}$. Thus, the~conditions are necessary and~sufficient.

{Hence}, a~$D$-dimensional topological manifold is metrizable with a (0,2)-rank tensor if and only if it is Hausdorff, second-countable, and~paracompact. For~(0,L)-rank tensors, metrizability is achieved by defining a suitable distance function with the above conditions.
\end{proof}

\subsection{Generalized Metrizability for (U,L)-Rank Tensors}
For a (U,L)-rank tensor $ \mathcal{F}_{i_1 \dots i_L}^{j_1 \dots j_U}$ to define a metric on a $D$-dimensional manifold $M$, consider the infinitesimal element as follows:
\begin{equation}
ds^{U+L} = \mathcal{F}_{i_1 \dots i_L}^{j_1 \dots j_U} dx^{i_1} \dots dx^{i_L} dx_{j_1} \dots dx_{j_U},
	\end{equation}	
where $dx^{i_k}$ are differentials of coordinates, and~$dx_{j_k}$ are differentials of covector components, interpreted via a (0,2)-rank tensor $g_{ij}$ such that $dx_{j_k} = g_{j_k k} dx^k$. The~distance function is as follows:
\begin{equation}
d(p, q) = \inf_{\gamma} \int_a^b \left|  \mathcal{F}_{i_1 \dots i_L}^{j_1 \dots j_U}(\gamma(t)) \dot{\gamma}^{i_1}(t) \dots \dot{\gamma}^{i_L}(t) \dot{\gamma}_{j_1}(t) \dots \dot{\gamma}_{j_U}(t) \right|^{1/(U+L)} \, dt,
	\end{equation}	
where $\gamma$: $[a, b] \to M$ is a smooth curve from $p$ to $q$, and~$\dot{\gamma}_{j_k} = g_{j_k k} \dot{\gamma}^k$. The~additional structure and conditions required~are as follows:
\begin{enumerate}
    \item \textbf{Symmetry}: The tensor $ \mathcal{F}_{i_1 \dots i_L}^{j_1 \dots j_U}$ must be symmetric under permutations of covariant indices $\{i_1, \dots, i_L\}$ and contravariant indices $\{j_1, \dots, j_U\}$ separately:
\begin{equation}
         \mathcal{F}_{i_{\sigma(1)} \dots i_{\sigma(L)}}^{j_{\tau(1)} \dots j_{\tau(U)}} =  \mathcal{F}_{i_1 \dots i_L}^{j_1 \dots j_U},
	\end{equation}	
        for permutations $\sigma$ and $\tau$, to~ensure $d(p, q) = d(q, p)$.
    \item \textbf{Positive-Definiteness}: For all non-zero vectors $v^i \in T_p M$ and covectors $w_j \in T^*_p M$,
\begin{equation}
         \mathcal{F}_{i_1 \dots i_L}^{j_1 \dots j_U} v^{i_1} \dots v^{i_L} w_{j_1} \dots w_{j_U} \geq 0,
	\end{equation}	
        with equality only when $v^i = 0$ or $w_j = 0$. If~$\dot{\gamma}_{j_k} = g_{j_k k} \dot{\gamma}^k$, the~integrand must be non-negative.
    \item \textbf{Topology Equivalence}: The distance $d(p, q)$ must induce the manifold’s topology, requiring 
	\(
    	\mathcal{F}_{i_1 \dots i_L}^{j_1 \dots j_U}
	\)
	to be compatible with local Euclidean charts.
    \item \textbf{Smoothness}: The tensor $ \mathcal{F}_{i_1 \dots i_L}^{j_1 \dots j_U}$ and any auxiliary fields (e.g., $g_{ij}$) must be smooth over $M$.
    \item \textbf{Auxiliary Structure}: A (0,2)-rank tensor $g_{ij}$, symmetric and positive-definite, to~relate covariant and contravariant components: $\dot{\gamma}_{j_k} = g_{j_k k} \dot{\gamma}^k$. Optionally, $U + L - 2$ fields (e.g., $L-1$ vector fields $v^{k_1}, \dots, v^{k_{L-1}}$ and $U-1$ covector fields $\omega_{l_1}, \dots, \omega_{l_{U-1}}$) \linebreak  such that
\begin{equation}
        g_{ij} =  \mathcal{F}_{i j k_1 \dots k_{L-1}}^{l_1 \dots l_U} v^{k_1} \dots v^{k_{L-1}} \omega_{l_1} \dots \omega_{l_U},
	\end{equation}	
        where $g_{ij}$ is a Riemannian metric, if~needed to simplify the construction.
    \item \textbf{Manifold Conditions}: The manifold must~be one of the following:
        \begin{itemize}
            \item \textbf{Hausdorff}: To separate points.
            \item \textbf{Second-Countable}: To ensure a countable basis.
            \item \textbf{Paracompact}: To allow a partition of unity for constructing $ \mathcal{F}_{i_1 \dots i_L}^{j_1 \dots j_U}$.
        \end{itemize}
\end{enumerate}

These conditions ensure the $(U,L)$-rank tensor defines a valid metric, generalizing the $(0,L)$-rank~case.

\subsection{Generalized Metrization~Theorem}
\begin{Theorem}[Generalized Metrization Theorem for Manifolds]
A $D$-dimensional topological manifold $M$ is metrizable with a (0,2)-rank tensor (Riemannian metric) if and only if it is Hausdorff, second-countable, and~paracompact. The~metric tensor $g = g_{ij} dx^i \otimes dx^j$ is symmetric, positive-definite, and~induces the manifold’s topology via the associated distance function. For~(U,L)-rank tensors, metrizability requires constructing a distance function, possibly via a (0,2)-rank tensor, using additional structure such as auxiliary fields and tensor operations.
\end{Theorem}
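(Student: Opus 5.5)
The plan is to split the statement into three parts and treat each in turn: the equivalence for $(0,2)$-tensors, the claim that the resulting $g$ induces the manifold topology, and the $(U,L)$ extension. This is the same architecture as the proof given for the $(0,L)$ version of the theorem.

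\textbf{The $(0,2)$ equivalence.} For the forward direction, assume $M$ is Hausdorff, second-countable and paracompact. I first show regularity by pulling back separating neighbourhoods through a chart. One point needs care: the closed set $C$ is met only inside a chart, so I will take a precompact coordinate ball $B$ with $\overline{B}\cap C=\emptyset$. This ball exists because $M\setminus C$ is an open neighbourhood of $x$. Then $B$ and $M\setminus\overline{B}$ separate $x$ from $C$, using Hausdorffness to ensure $\overline{B}$ is compact, hence closed. Urysohn then gives a compatible metric $d$.

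If $M$ is smooth, I construct $g=\sum_i\psi_i g_i$ from a partition of unity subordinate to a locally finite chart cover, as in the earlier proof. Positive-definiteness follows because at each point some $\psi_i>0$ and the sum is a convex combination of positive-definite forms. For the converse, a metric space is Hausdorff and paracompact (Stone's theorem). Second countability does not follow from metrizability alone, so I will use the manifold structure. Each component of a paracompact locally Euclidean space is $\sigma$-compact, hence second-countable. I will therefore state the converse componentwise, or under the standing assumption of countably many components.

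\textbf{Topology induced by $g$.} This is the step I expect to be the main obstacle. My plan is to work in a chart around $p$ with a compact coordinate ball $\overline{B}$, on which continuity and compactness of the unit sphere bundle give constants $0<\lambda\le\Lambda$ with
\begin{equation}
\lambda|v|^2\le g(v,v)\le \Lambda|v|^2 .
\end{equation}
Curves staying inside $B$ then have $g$-length comparable to Euclidean length. A curve that leaves $B$ has $g$-length at least $\sqrt{\lambda}\,r$, where $r$ is the Euclidean radius of $B$. Hence small $d_g$-balls and small Euclidean balls are nested in each other, and the two topologies coincide.

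\textbf{The $(U,L)$ case.} I substitute $\dot\gamma_j=g_{jk}\dot\gamma^k$, which makes the integrand $\Phi(v)=|\mathcal F(v,\dots,v,gv,\dots,gv)|^{1/(U+L)}$, a continuous function on $TM$ that is positively homogeneous of degree one. Under the positive-definiteness hypothesis ($\Phi(v)>0$ for $v\neq 0$), the same compactness argument on the unit sphere bundle yields $\lambda'|v|\le\Phi(v)\le\Lambda'|v|$ locally. The comparison argument above then shows that $d_\Phi$ is a metric inducing the topology.

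Symmetry of $d_\Phi$ does not come from index symmetry. It requires $\Phi(-v)=\Phi(v)$, which holds because of the absolute value. The triangle inequality follows from concatenating curves.

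So, read as ``metrizability is achieved possibly via auxiliary structure,'' the $(U,L)$ statement holds whenever $\Phi$ is nondegenerate. Failing that, one falls back on the Riemannian $g$ already constructed, which is the sense in which the theorem allows auxiliary structure.
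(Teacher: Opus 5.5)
Your argument is correct. It shares the paper's skeleton: regularity plus Urysohn, the partition-of-unity metric $g=\sum_i\psi_i g_i$, the $(U,L)$ length functional with indices lowered by $g$, and a converse. At several steps, however, you argue differently from the paper, and in each case your version is the one that actually works.

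\emph{Regularity.} The paper separates $\phi(x)$ from $\phi(C\cap U)$ inside a single chart. That produces no open set containing the part of $C$ outside $U$. Your precompact coordinate ball with $\overline{B}\cap C=\emptyset$ supplies exactly the missing step.

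\emph{Converse.} The paper asserts that second countability is a property of metric spaces, which is false. An uncountable disjoint union of lines, with its Riemannian distance truncated at $1$, is a counterexample. The paper also gives no argument that paracompactness is necessary. Your use of Stone's theorem together with $\sigma$-compactness of components is the right argument. Your componentwise caveat is precisely what the ``if and only if'' requires when $M$ is assumed only to be locally Euclidean.

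\emph{Topology induced by $g$.} The paper merely asserts that local charts make the two topologies agree. You give the standard $\lambda,\Lambda$ comparison on a compact coordinate ball.

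\emph{The $(U,L)$ case.} The paper attributes $d(p,q)=d(q,p)$ to index symmetry and imposes positivity on independent $v$ and $w$. You instead reduce everything, correctly, to the diagonal function $\Phi$: absolute homogeneity gives symmetry, and nondegeneracy gives a genuine metric inducing the topology.

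One sharpening of your last point. For real-valued $\mathcal{F}$ with $U+L$ odd and $D\ge 2$, nondegeneracy can never hold. The map $v\mapsto\mathcal{F}(v,\dots,v,gv,\dots,gv)$ is then an odd function, so it must vanish somewhere on the connected unit sphere. The paper's own integrand $|\dot x^3+\dot y^3|^{1/3}$ vanishes along $(1,-1)$ and gives only a pseudometric. So for odd total rank the comparison argument is never available, and the $(U,L)$ claim rests entirely on your fallback to $g$.

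The paper's version buys only brevity. Yours proves a slightly corrected statement: smooth $M$, the converse componentwise, and the $(U,L)$ part under nondegeneracy of $\Phi$.
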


\subsection{Proof of the Generalized Metrization~Theorem}
\begin{proof}
We prove that a $D$-dimensional topological manifold $M$ is metrizable with a (0,2)-rank tensor if it is Hausdorff, second-countable, and~paracompact, and~extends to (U,L)-rank~tensors.

{Suppose} $M$ is a $D$-dimensional topological manifold, i.e.,~it is Hausdorff, second-countable, and~locally Euclidean. By~the Urysohn metrization theorem, a~second-countable, Hausdorff, and~regular space is metrizable with a distance function $d: M \times M \to \mathbb{R}_{\geq 0}$. To~show regularity, note that manifolds are locally Euclidean, so for any point $x \in M$ and closed set $C \not\ni x$, there exists a chart $(U, \phi)$ with $x \in U$ and $\phi: U \to \mathbb{R}^D$. Since $\mathbb{R}^D$ is regular, we can separate $\phi(x)$ and $\phi(C \cap U)$ with disjoint open sets, which pull back to $M$, ensuring regularity. Thus, $M$ admits a metric $d$.

{For} a smooth manifold, we seek a (0,2)-rank tensor (Riemannian metric). Since $M$ is paracompact, there exists a smooth partition of unity $\{\psi_i\}$ subordinate to a locally finite open cover $\{U_i\}$, where each $U_i$ is homeomorphic to an open subset of $\mathbb{R}^D$. In~each chart $(U_i, \phi_i)$, we define a local metric $g_i = \sum_{j=1}^D dx^j \otimes dx^j$ (the Euclidean metric). Using the partition of unity, we construct a global metric:
\begin{equation}
g = \sum_i \psi_i g_i.
	\end{equation}	

{Since} $\{\psi_i\}$ is locally finite and $\psi_i \geq 0$ with $\sum_i \psi_i = 1$, and~each $g_i$ is positive-definite, $g$ is a smooth, symmetric, positive-definite (0,2)-rank tensor. The~distance function induced by $g$:
\begin{equation}
d(p, q) = \inf_{\gamma} \int_a^b \sqrt{g_{\gamma(t)}(\dot{\gamma}(t), \dot{\gamma}(t))} \, dt,
	\end{equation}	
which generates a topology equivalent to the manifold’s topology, as local charts ensure compatibility with Euclidean~distances.

{For} a (U,L)-rank tensor $ \mathcal{F}_{i_1 \dots i_L}^{j_1 \dots j_U}$, we define the distance as follows:
\begin{equation}
d(p, q) = \inf_{\gamma} \int_a^b \left|  \mathcal{F}_{i_1 \dots i_L}^{j_1 \dots j_U}(\gamma(t)) \dot{\gamma}^{i_1}(t) \dots \dot{\gamma}^{i_L}(t) \dot{\gamma}_{j_1}(t) \dots \dot{\gamma}_{j_U}(t) \right|^{1/(U+L)} \, dt,
	\end{equation}	
where $\dot{\gamma}_{j_k} = g_{j_k k} \dot{\gamma}^k$ using a (0,2)-rank tensor $g_{ij}$. This requires $ \mathcal{F}_{i_1 \dots i_L}^{j_1 \dots j_U}$ to be symmetric in its covariant and contravariant indices, with~$m_{i_1 \dots i_L}^{j_1 \dots j_U} v^{i_1} \dots v^{i_L} w_{j_1} \dots w_{j_U} \geq 0$, and~smooth. The~manifold must be Hausdorff, second-countable, and~paracompact. Optionally, $ \mathcal{F}_{i_1 \dots i_L}^{j_1 \dots j_U}$ can be related to a (0,2)-rank tensor $g_{ij}$ via contractions with $L-1$ vector fields and\linebreak   $U-1$ covector fields. The~construction of $ \mathcal{F}_{i_1 \dots i_L}^{j_1 \dots j_U}$ or $g_{ij}$ uses the partition of unity, relying on~paracompactness.

{Conversely}, if~$M$ is metrizable with a (0,2)-rank tensor, it must be Hausdorff and second-countable (as these are properties of metric spaces). Paracompactness is necessary for smooth manifolds to ensure the existence of a partition of unity, which is required to construct the global metric $g$ or $ \mathcal{F}_{i_1 \dots i_L}^{j_1 \dots j_U}$. Thus, the~conditions are necessary and~sufficient.

\textls[-13]{{Hence}, a~$D$-dimensional topological manifold is metrizable with a (0,2)-rank tensor if and only if it is Hausdorff, second-countable, and~paracompact. For~(U,L)-rank tensors, metrizability is achieved by defining a suitable distance function with the above~conditions.}
\end{proof}

\subsection{Comparison of the Generalized Manifold--Metric Pair with a Hessian~Structure}
A Hessian structure on a manifold $M$ of dimension $D$ is defined by a smooth potential function $\phi: M \to \mathbb{R}$, whose Hessian is a (U,L)-rank tensor provided by the following:
\begin{equation}
    \mathcal{H}_{\mu_1 \dots \mu_L}^{\nu_1 \dots \nu_U} = \partial_{\mu_1} \dots \partial_{\mu_L} \partial^{\nu_1} \dots \partial^{\nu_U} \phi,
\end{equation}
\textls[-15]{where $\partial_{\mu_i} = \partial / \partial c^{\mu_i}$ are derivatives with respect to contravariant coordinates, and~$\partial^{\nu_j} = \partial / \partial c_{\nu_j}$ }\linebreak  are derivatives with respect to covariant coordinates. This Hessian is equated to the generalized metric tensor:
\begin{equation}
\mathcal{H}_{\mu_1 \dots \mu_L}^{\nu_1 \dots \nu_U} = \mathcal{F}_{\mu_1 \dots \mu_L}^{\nu_1 \dots \nu_U}[c, f(c)].
	\end{equation}	
	
The line element for the Hessian structure is as follows:
\begin{equation}
ds^{U+L} = \mathcal{H}_{\mu_1 \dots \mu_L}^{\nu_1 \dots \nu_U} dc^{\mu_1} \dots dc^{\mu_L} dc_{\nu_1} \dots dc_{\nu_U},
	\end{equation}	
and the distance function is
\begin{equation}
d(p, q) = \inf_{\gamma} \int_a^b \left| \mathcal{H}_{\mu_1 \dots \mu_L}^{\nu_1 \dots \nu_U}(\gamma(t)) \dot{\gamma}^{\mu_1}(t) \dots \dot{\gamma}^{\mu_L}(t) \dot{\gamma}_{\nu_1}(t) \dots \dot{\gamma}_{\nu_U}(t) \right|^{1/(U+L)} \, dt,
	\end{equation}	
where $\dot{\gamma}_{\nu_k} = g_{\nu_k \lambda} \dot{\gamma}^\lambda$ using a symmetric, positive-definite (0,2)-rank tensor $g_{\mu\nu}$. The~manifold typically requires a structure supporting mixed derivatives, such as a flat connection or a bundle~structure.

The generalized manifold--metric pair is as follows:
\begin{equation}
(\mathcal{M}_G, \mathcal{F}) \equiv \left( \mathcal{M}_G, \mathcal{F}_{\mu_1 \dots \mu_L}^{\nu_1 \dots \nu_U}[c, f(c)] : C_{\mu_1} \otimes \dots \otimes C_{\nu_U} \rightarrow \mathbb{Q} \right),
	\end{equation}	
with the same line element and distance function. Since $\mathcal{F}_{\mu_1 \dots \mu_L}^{\nu_1 \dots \nu_U} = \mathcal{H}_{\mu_1 \dots \mu_L}^{\nu_1 \dots \nu_U}$, the~two structures are equivalent, with~the Hessian providing a specific construction of $\mathcal{F}$ via the potential $\phi$. We can compare the two as follows:

\begin{enumerate}
    \item \textbf{Tensor Rank}: Both $\mathcal{H}_{\mu_1 \dots \mu_L}^{\nu_1 \dots \nu_U}$ and $\mathcal{F}_{\mu_1 \dots \mu_L}^{\nu_1 \dots \nu_U}$ are (U,L)-rank tensors, making them structurally identical.
    \item \textbf{Line Element}: The line elements are identical, as~$\mathcal{F} = \mathcal{H}$, both of order $U+L$.
    \item \textbf{Functional Dependence}: The Hessian depends solely on $\phi$, while $\mathcal{F}$ includes $f(c)$, which may be incorporated into $\phi$ or represent coordinate parameterization.
    \item \textbf{Symmetry}: Both tensors are symmetric in their covariant and contravariant indices due to the commutativity of partial derivatives in $\mathcal{H}$, satisfying the symmetry requirement for $\mathcal{F}$.
    \item \textbf{Positive-Definiteness}: Both require $\mathcal{F}_{\mu_1 \dots \mu_L}^{\nu_1 \dots \nu_U} v^{\mu_1} \dots v^{\mu_L} w_{\nu_1} \dots w_{\nu_U} \geq 0$, ensured by the absolute value in the distance function and the auxiliary metric $g_{\mu\nu}$.
    \item \textbf{Applications}: The Hessian structure is used in information geometry and statistical mechanics, while $\mathcal{F}$ supports cosmological models like FLRW, where $\mathcal{F}^{\rm EHI}_{\mu\nu}[t, a(t)]$ (with $U=0$, $L=2$) is unlikely to be a Hessian unless a suitable $\phi$ is constructed.
\end{enumerate}

{The} equivalence $\mathcal{F} = \mathcal{H}$ implies the generalized manifold--metric pair is a Hessian structure, with~$\phi$ defining the metric tensor, suitable for applications requiring higher-rank~tensors.

}


\section{Generic Advanced Manifold--Metric Pair~Applications}

{This section applies the generalized manifold--metric pair, as~defined in Section~\ref{sec:Advancedmanifold-metric_pairs}, to~physical systems in arbitrary dimensions, including spatial and spacetime manifolds. We formalize these applications through a theorem supported by propositions and examples, demonstrating the construction of higher-rank tensor metrics with real or quaternionic codomains, suitable for diverse physical contexts such as cosmology.}

\subsection{Applications of Generalized Manifold--Metric~Pairs}
\label{sec:Applications}

\begin{Theorem}[Applications of Generalized Manifold--Metric Pairs]
Let $\mathcal{M}^D$ be a $D$-dimensional differentiable manifold. The~generalized manifold--metric pair $(\mathcal{M}_G, \mathcal{F})$, where $\mathcal{F}_{\mu_1 \dots \mu_L}^{\nu_1 \dots \nu_U}[c, f(c)] : C_{\mu_1} \otimes \dots \otimes C_{\nu_U} \to \mathbb{Q}$ is a $(U,L)$-rank functional tensor, can be applied to construct metrics for physical systems, including the following:
\begin{enumerate}
    \item Higher-rank $(0,L)$-tensor metrics with real codomain $\mathbb{R}^+$ on Euclidean manifolds.
    \item $(0,L)$-tensor metrics with complex or quaternionic codomains for quantum or field-theoretic applications.
    \item Functional $(0,L)$-tensor metrics with time-dependent scaling for cosmological spacetimes, such as the Friedmann--Lemaître--Robertson--Walker (FLRW) model.
\end{enumerate}
The line element is provided as follows:
\begin{equation}
    ds^{U+L} = \mathcal{F}_{\mu_1 \dots \mu_L}^{\nu_1 \dots \nu_U}[c, f(c)] dc^{\mu_1} \dots dc^{\mu_L} dc_{\nu_1} \dots dc_{\nu_U},
\end{equation}
where $dc_{\nu_i} = g_{\nu_i \lambda} dc^\lambda$ for a symmetric, positive-definite {$(0,2)$}-rank tensor $g_{\mu\nu}$, and~$\mathcal{F}$ transforms as a $(U,L)$-tensor, with~optional symmetry in indices.
\end{Theorem}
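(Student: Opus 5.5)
The theorem is existential and constructive: each of the three listed applications is realised by exhibiting an explicit functional tensor $\mathcal{F}$ and checking that it fits the framework of the Generalized Manifold--Metric Pair Theorem. I would therefore not re-derive the generalized pair. Instead I would invoke that theorem, obtained via Propositions 1--3, which already guarantees that any $(U,L)$-rank functional tensor with the stated transformation law yields a coordinate-invariant line element $ds^{U+L}$. The proof then reduces to three propositions, one per item. In each I specialise to $U=0$, so the auxiliary $g_{\mu\nu}$ plays no role in lowering indices. I then write down a concrete $\mathcal{F}_{\mu_1\dots\mu_L}$ and verify tensoriality, the required symmetry in the covariant indices, and, where a distance is wanted, the conditions of the Generalized Metrization Theorem for $(0,L)$-rank tensors.

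For item 1, I take $\mathcal{M}=\mathbb{R}^D$ with Cartesian coordinates. I set $\mathcal{F}_{\mu_1\dots\mu_L}=\delta_{\mu_1\dots\mu_L}$, the generalized Kronecker symbol equal to $1$ when all indices coincide and $0$ otherwise. This gives $ds^L=\sum_\mu (dc^\mu)^L$, which generalises the $2$D example $ds^3=dx^3+dy^3$ from the $(0,3)$ analysis. The components are constant, so the object is smooth and symmetric. To land in $\mathbb{R}^+$, I take $L$ even, so the form is positive-definite and the distance $d(p,q)=\inf_\gamma\int|\cdot|^{1/L}dt$ is a genuine length metric: it is the $\ell^L$-norm Finsler metric. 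For odd $L$ I rely on the absolute value, as the paper already does. Topological equivalence follows because all norms on $\mathbb{R}^D$ are equivalent to the Euclidean norm, so the Generalized Metrization Theorem applies directly.

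For item 2, I use the same construction with coefficients in $\mathbb{C}$ or $\mathbb{Q}$, for instance $ds^L=\sum_\mu q_\mu (dc^\mu)^L$ with fixed unit quaternions $q_\mu$. Proposition 3 (Codomain Generalization to Quaternions) already permits this. The only new check is that real differentials commute with quaternionic constants, so the contraction is well defined even though $\mathbb{Q}$ is noncommutative. I would note that a real distance is recovered by taking $|\cdot|$ (the quaternionic modulus) before the $1/L$-th root, in line with Remark~\ref{sec:remark_1}.

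For item 3, I use Proposition 2 (Functional Tensor Generalization) with $c=(t,x^i)$ and $f(c)=a(t)$ a smooth positive scale factor. I define $\mathcal{F}_{\mu_1\dots\mu_L}[t,a(t)]$ to have a time component $\mathcal{F}_{0\dots0}=-1$ (or $\pm1$ depending on the parity of $L$), spatial components $\mathcal{F}_{i\dots i}=a(t)^L$, and all other components zero. This yields $ds^L=-dt^L+a(t)^L\sum_i (dx^i)^L$, which reduces to flat-slice FLRW when $L=2$. Homogeneity and isotropy, at least under the discrete permutation and reflection group for $L\neq2$, follow because the spatial part depends only on $t$.

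I expect the main obstacle to be the claims of invariance and isotropy for $L>2$. The transformation law guarantees invariance of $ds^L$ under general diffeomorphisms, but the symmetry group of $\sum_i(dx^i)^L$ is only the hyperoctahedral group, not $SO(3)$. I would therefore state isotropy in that weakened sense, or alternatively use $\big(\delta_{ij}dx^idx^j\big)^{L/2}$ for even $L$ to retain full rotational symmetry.
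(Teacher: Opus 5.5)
Your plan follows the paper's own proof. The paper also proves this theorem through three constructive propositions, one per item: a symmetric $(0,L)$-tensor into $\mathbb{R}^+$ illustrated by $ds^3=dx^3+dy^3$, a $\mathbb{C}$-valued symmetric $(0,L)$-tensor illustrated by $3$D and $7$D examples, and the $L=2$ FLRW tensor $\mathrm{diag}(-c^2,a^2,a^2,a^2)$. In each it checks only symmetry and the tensorial transformation law. Your constructions are more explicit: the generalized Kronecker symbol for all $D,L$, and an $L$-th order FLRW analogue that reduces to the paper's at $L=2$. Your restriction to even $L$ also actually delivers the $\mathbb{R}^+$ claim, which the paper's own cubic example does not.

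Two side remarks in your proposal are wrong as stated and should be corrected, though neither affects the existential statement.

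\emph{The odd-$L$ fallback.} Drop it. For odd $L$ no nonzero symmetric $m$ can satisfy $m(v,\dots,v)\ge 0$ for all $v$, because $m(-v,\dots,-v)=-m(v,\dots,v)$. The absolute value does not rescue the distance either. For $ds^3=dx^3+dy^3$ the segment $\gamma(t)=(t,-t)$ has zero length, so $d\bigl((0,0),(1,-1)\bigr)=0$. In fact $d(p,q)=4^{-1/3}|\Delta x+\Delta y|$, which is only a degenerate pseudometric.

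\emph{The quaternionic modulus in item 2.} Taking the modulus gives a nondegenerate distance only under a condition on the $q_\mu$, for instance $L$ even and $\mathrm{Re}\,q_\mu>0$ for every $\mu$. With $q_1=1$, $q_2=-1$ and even $L$, the direction $(1,1,0,\dots,0)$ has zero length. Item 2 does not require a distance at all, so you only need to qualify that sentence.
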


\begin{proof}
{\textls[-15]{The theorem is established through three propositions, each constructing a specific class of generalized manifold--metric pairs, followed by examples illustrating their application. The~propositions address higher-rank tensors, complex/quaternionic codomains, and~cosmological applications, ensuring the theorem’s claims are systematically verified.}}
\end{proof}

\begin{Proposition}[Higher-Rank Tensor Metrics with Real Codomains]
\label{prop:higher_rank}
On a $D$-dimensional differentiable manifold $\mathcal{M}^D$, a~generalized manifold--metric pair $(\mathcal{M}, m_{\mu_1 \dots \mu_L} : C_{\mu_1} \otimes \dots \otimes C_{\mu_L} \to \mathbb{R}^+)$ \linebreak  can be constructed with a symmetric $(0,L)$-rank tensor $m_{\mu_1 \dots \mu_L}$, defining a line element \linebreak  $ds^L = m_{\mu_1 \dots \mu_L} dc^{\mu_1} \dots dc^{\mu_L}$.
\end{Proposition}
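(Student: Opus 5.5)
The plan is a direct construction. I will specialise the Generalized Manifold--Metric Pair Theorem (via its Proposition on extension to $(U,L)$-rank tensors) to $U=0$, take $\mathcal{F}$ independent of the functional argument $f$, and restrict the codomain to $\mathbb{R}^+$ as in the Remark on restriction to real numbers. What remains is to exhibit a symmetric $(0,L)$-tensor whose associated $L$-form is nonnegative.

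\textbf{Step 1: the model case on $\mathbb{R}^D$.} Take $L$ even, $L=2k$, and set
\begin{equation}
m_{\mu_1\dots\mu_L}=\delta_{(\mu_1\mu_2}\delta_{\mu_3\mu_4}\cdots\delta_{\mu_{L-1}\mu_L)},
\end{equation}
the full symmetrisation of a product of Kronecker deltas. It is symmetric by construction. Contracting with $v^{\mu_1}\cdots v^{\mu_L}$ gives
\begin{equation}
m_{\mu_1\dots\mu_L}v^{\mu_1}\cdots v^{\mu_L}=\bigl(\delta_{\mu\nu}v^\mu v^\nu\bigr)^{k}=|v|^L ,
\end{equation}
which is $\geq 0$ with equality only at $v=0$. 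A second admissible choice is the diagonal tensor $m_{\mu\dots\mu}=1$, which gives $\sum_\mu (v^\mu)^L$.

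For odd $L$ no real form can be positive on all nonzero $v$, since the form is odd in $v$. In that case I will take the codomain $\mathbb{R}^+$ in the sense of the absolute-value prescription used in the distance function for $(0,L)$-rank tensors, namely $|m\,v\cdots v|^{1/L}$. The cube example $dx^3+dy^3$ of the $(0,3)$ analysis illustrates exactly this.

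\textbf{Step 2: globalise to $\mathcal{M}^D$.} Use the partition-of-unity argument from the proof of the Generalized Metrization Theorem. Choose a locally finite atlas $\{(U_i,\phi_i)\}$ with subordinate partition of unity $\{\psi_i\}$, and define $m=\sum_i\psi_i\,m^{(i)}$, where $m^{(i)}$ is the model tensor in the chart coordinates. Three properties then follow:
\begin{itemize}
\item Symmetry and smoothness are preserved because the sum is locally finite.
\item Positivity is preserved because $\psi_i\ge0$, $\sum_i\psi_i=1$, and the set of nonnegative forms is a convex cone. Definiteness also survives, since at each point some $\psi_i>0$.
\item The tensor transformation law holds because each term transforms tensorially, as in the Proposition on extension to $(U,L)$-rank tensors.
\end{itemize}
The line element $ds^L=m_{\mu_1\dots\mu_L}dc^{\mu_1}\cdots dc^{\mu_L}$ is therefore coordinate invariant and takes values in $\mathbb{R}^+$.

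\textbf{Main obstacle.} The hard point is positivity for general $L$: the convexity argument needs every local piece to be nonnegative, and this is only genuinely available for even $L$. I will handle this by splitting into cases:
\begin{itemize}
\item For even $L$, the power-of-metric construction works.
\item For odd $L$, the codomain statement is interpreted through $|\cdot|$. Alternatively, one can use $m=g_{(\mu_1\mu_2}\cdots g_{\mu_{L-2}\mu_{L-1}}\omega_{\mu_L)}$ with a covector field $\omega$, which is positive only on the half-space $\omega(v)>0$. This matches the paper's remark that the absolute value is necessary.
\end{itemize}
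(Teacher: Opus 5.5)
Your argument is correct, but it takes a genuinely different and more substantive route than the paper.

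\textbf{What the paper does.} The paper's proof is purely definitional. On a single coordinate space $C$ it posits a symmetric $(0,L)$-tensor declared to take values in $\mathbb{R}^+$, records its covariant transformation law, and concludes that $ds^L$ is invariant. No tensor is exhibited and nothing is globalised. Its closing remark that $ds^3$ may take negative values quietly contradicts the claimed codomain.

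\textbf{What your route buys.} You actually prove existence, in three moves:
\begin{itemize}
\item You read ``$\to\mathbb{R}^+$'' as positivity on the diagonal $v\otimes\cdots\otimes v$. This is the only sensible reading, since a nonzero linear map on the tensor product is never sign-definite.
\item You give an explicit definite model for even $L$.
\item You globalise it with the partition-of-unity device from the paper's metrization theorem, which the paper's proof of this proposition never invokes. Equivalently, take $m=g_{(\mu_1\mu_2}\cdots g_{\mu_{L-1}\mu_L)}$ for any Riemannian $g$, so that $m(v,\dots,v)=g(v,v)^{L/2}$.
\end{itemize}
The payoff is the parity dichotomy the paper leaves implicit. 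The statement holds with a definite form for even $L$ and cannot hold literally for odd $L$. Your case split therefore reflects a defect of the statement, not a gap in your proof.

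\textbf{Two sharpenings for odd $L$.}
\begin{itemize}
\item Under the $|\cdot|$ prescription, nonnegativity is automatic for every tensor, so your Step 2 contributes nothing there. Definiteness, on the other hand, is impossible once $D\ge 2$: an odd continuous function on the connected sphere $S^{D-1}$ must vanish somewhere. The paper's own $dx^3+dy^3$ vanishes along $(1,-1)$.
\item Your alternative $g\cdots g\,\omega$ needs a nowhere-vanishing covector field $\omega$. At a zero of $\omega$ the whole tensor vanishes. Such an $\omega$ does not exist on compact manifolds with $\chi\neq 0$, such as $S^2$. Even where it exists, it gives positivity only on a half-space.
\end{itemize}
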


\begin{proof}
{Consider a $D$-dimensional differentiable manifold $\mathcal{M}^D$ with coordinate space $C$ and coordinates $c^\mu$.} Define a $(0,L)$-rank tensor $m_{\mu_1 \dots \mu_L} : C_{\mu_1} \otimes \dots \otimes C_{\mu_L} \to \mathbb{R}^+$, symmetric under permutations of indices, i.e.,~$m_{\mu_{\pi(1)} \dots \mu_{\pi(L)}} = m_{\mu_1 \dots \mu_L}$ for any permutation $\pi$. The~tensor transforms under coordinate changes $c^\mu \to c^{\mu'} = f^{\mu'}(c)$ as follows:
\begin{equation}
    m_{\mu_1' \dots \mu_L'} = \left( \prod_{j=1}^L \frac{\partial c^{\sigma_j}}{\partial c^{\mu_j'}} \right) m_{\sigma_1 \dots \sigma_L}.
\end{equation}

{The} line element is as follows:
\begin{equation}
    ds^L = m_{\mu_1 \dots \mu_L} dc^{\mu_1} \dots dc^{\mu_L},
\end{equation}
which is invariant under coordinate transformations due to the tensor’s transformation properties~\cite{Lee:2012}. {The codomain $\mathbb{R}^+$ ensures a measurable metric, and~symmetry guarantees that the line element is well-defined for physical applications.} {The cubic form $ds^3$ can take positive, negative, or~zero values, which is expected for a non-traditional (0,3)-tensor metric and does not pose an issue as~it generalizes the standard pseudo-Riemannian metric, defined as a symmetric (0,2)-tensor~\cite{Lee2018, ONeill1983}, to~capture higher-order interactions, which is consistent with advanced geometric frameworks~\cite{Kunzinger2001}.}
\end{proof}

\begin{Example}[Two-Dimensional Manifold with a (0,3)-Tensor Metric]
Consider a two-dimensional Euclidean manifold $\mathcal{M}^{2D}$ with coordinates $c^\mu = (x, y)$. Define a $(0,3)$-tensor metric as follows:
\begin{equation}
    m_{\mu\nu\rho} = \begin{cases} 
        1 & \text{if } (\mu,\nu,\rho) = (1,1,1) \text{ or } (2,2,2), \\
        0 & \text{otherwise},
    \end{cases}
\end{equation}
which is symmetric under index permutations. The~line element is as follows:
\begin{equation}
    ds^3 = m_{\mu\nu\rho} dc^\mu dc^\nu dc^\rho = dx^3 + dy^3.
\end{equation}

This pair $(\mathcal{M}^{2D}, m_{\mu\nu\rho})$ is a special case of Proposition~\ref{prop:higher_rank} with $L=3$, $U=0$, and~codomain $\mathbb{R}$, generalizing the Pythagorean theorem.
\end{Example}

\begin{Proposition}[Complex/Quaternionic Codomain Metrics]
\label{prop:complex_codomain}
On a $D$-dimensional differentiable manifold $\mathcal{M}^D$, a~generalized manifold--metric pair $(\mathcal{M}, m_{\mu_1 \dots \mu_L} : C_{\mu_1} \otimes \dots \otimes C_{\mu_L} \to \mathbb{C})$ can be constructed with a symmetric $(0,L)$-rank tensor $m_{\mu_1 \dots \mu_L}$, where $\mathbb{C} \subset \mathbb{Q}$, suitable for quantum or field-theoretic applications.
\end{Proposition}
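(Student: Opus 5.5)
The plan mirrors the proof of Proposition~\ref{prop:higher_rank}, changing only the codomain. Fix a chart with coordinates $c^\mu$ on $\mathcal{M}^D$. I define $m_{\mu_1\dots\mu_L}$ as a $\mathbb{C}$-valued $(0,L)$-tensor. Equivalently, it is a section of the complexified bundle $(T^*\mathcal{M})^{\otimes L}\otimes_{\mathbb{R}}\mathbb{C}$, with components $m_{\mu_1\dots\mu_L}=a_{\mu_1\dots\mu_L}+i\,b_{\mu_1\dots\mu_L}$, where $a$ and $b$ are real smooth symmetric $(0,L)$-tensors. Concretely, I will take $a$ and $b$ as in Proposition~\ref{prop:higher_rank}, for instance the diagonal tensors of the earlier example. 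Existence of such real symmetric tensors on any (paracompact) manifold follows from the partition-of-unity construction in the Generalized Metrization Theorem: patch local chart expressions such as $\sum_j (dx^j)^{\otimes L}$ with $\{\psi_i\}$.

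Next I check the three structural properties. Symmetry holds because $m_{\mu_{\pi(1)}\dots\mu_{\pi(L)}}=a_{\mu_{\pi(1)}\dots}+i\,b_{\mu_{\pi(1)}\dots}=m_{\mu_1\dots\mu_L}$ for every permutation $\pi$. Tensoriality holds because the Jacobian factors $\partial c^{\sigma_j}/\partial c^{\mu_j'}$ are real, so the transformation law of Proposition~\ref{prop:higher_rank} applies to $a$ and $b$ separately and hence, by $\mathbb{C}$-linearity, to $m$. Invariance then follows: the line element $ds^L=m_{\mu_1\dots\mu_L}dc^{\mu_1}\cdots dc^{\mu_L}=ds_a^L+i\,ds_b^L$ is coordinate-independent, with real and imaginary parts each invariant.

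Finally I embed $\mathbb{C}\subset\mathbb{Q}$ via $x+iy\mapsto x+y\,\mathbf{i}$. This field embedding shows that the pair is a special case of Proposition~3 (Codomain Generalization to Quaternions) and hence of the Generalized Manifold--Metric Pair Theorem with $U=0$. Because the image lies in a commutative subfield, the non-commutativity of $\mathbb{Q}$ never affects the ordering of factors in the line element.

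The main obstacle is giving real meaning to the phrase ``metric'' and to ``suitable for quantum or field-theoretic applications'' once values are complex, since positivity is lost. I would handle this as in the $(0,L)$ metrization section. First, use $|m_{\mu_1\dots\mu_L}\dot\gamma^{\mu_1}\cdots\dot\gamma^{\mu_L}|^{1/L}$, with the complex modulus, to define a length functional. Second, note that applicability requires either taking the modulus or restricting to the real part, in line with Remark~\ref{sec:remark_1}. The simplest illustration is $L=2$ with a Hermitian-type choice ($a$ positive-definite, $b$ small), which recovers a standard Riemannian length at leading order.
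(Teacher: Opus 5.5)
Your proposal is correct and follows essentially the same route as the paper. Both define a symmetric $\mathbb{C}$-valued $(0,L)$-tensor, check that it obeys the $(0,L)$ transformation law (the Jacobian factors are real), conclude that $ds^L\in\mathbb{C}$ is coordinate-invariant, and note that $\mathbb{C}\subset\mathbb{Q}$; your splitting $m=a+i\,b$, the partition-of-unity existence argument and the explicit embedding into $\mathbb{Q}$ make explicit what the paper simply asserts, and your closing modulus/positivity discussion goes beyond what the statement requires (the paper does not address it here).
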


\begin{proof}
{Start with a $D$-dimensional differentiable manifold $\mathcal{M}^D$ and coordinate space $C$ with coordinates $c^\mu$.} Define a $(0,L)$-rank tensor $m_{\mu_1 \dots \mu_L} : C_{\mu_1} \otimes \dots \otimes C_{\mu_L} \to \mathbb{C}$, symmetric under index permutations. The~tensor transforms as follows:
\begin{equation}
    m_{\mu_1' \dots \mu_L'} = \left( \prod_{j=1}^L \frac{\partial c^{\sigma_j}}{\partial c^{\mu_j'}} \right) m_{\sigma_1 \dots \sigma_L},
\end{equation}
preserving the differentiable structure~\cite{Lee:2012}. The~line element is as follows:
\begin{equation}
    ds^L = m_{\mu_1 \dots \mu_L} dc^{\mu_1} \dots dc^{\mu_L} \in \mathbb{C}.
\end{equation}

{The complex codomain, a~subset of $\mathbb{Q}$, supports applications in quantum field theory, where complex-valued metrics model wave-like phenomena~\cite{Adler:1995}. The~symmetry and transformation properties ensure a consistent geometric framework.}
\end{proof}

\begin{Example}[Three-Dimensional Manifold with a Complex (0,3)-Tensor Metric]
Consider a three-dimensional manifold $\mathcal{M}^{3D}$ with coordinates $c^\mu = (x, y, z)$. Define a $(0,3)$-tensor metric as follows:
\begin{equation}
    m_{\mu\nu\rho} = \begin{cases} 
        -i & \text{if } (\mu,\nu,\rho) = (1,1,1), \\
        1 & \text{if } (\mu,\nu,\rho) = (2,2,2) \text{ or } (3,3,3), \\
        0 & \text{otherwise},
    \end{cases}
\end{equation}
yielding
\begin{equation}
    ds^3 = -i dx^3 + dy^3 + dz^3.
\end{equation}

This pair $(\mathcal{M}^{3D}, m_{\mu\nu\rho})$ is a special case of Proposition~\ref{prop:complex_codomain} with $L=3$, $U=0$, and~codomain $\mathbb{C}$.
\end{Example}

\begin{Example}[Seven-Dimensional Manifold with a Functional (0,7)-Tensor Metric]
Consider a seven-dimensional manifold $\mathcal{M}^{7D}$ with coordinates $c^\mu = (\tau, t, x_1, \dots, x_5)$. Define a functional $(0,7)$-tensor metric as follows:
\begin{equation}
    m_{\mu_1 \dots \mu_7}(c) = a^7(\tau) \begin{cases} 
        -i & \text{if } \mu_1 = \dots = \mu_7 = 1, \\
        b^7(\tau, t, x_1) & \text{if } \mu_1 = \dots = \mu_7 = 2, \\
        f^7(\vec{x}; k) & \text{if } \mu_1 = \dots = \mu_7 \in \{3, \dots, 7\}, \\
        0 & \text{otherwise},
    \end{cases}
\end{equation}
where $a(\tau)$ is a scale factor, $b(\tau, t, x_1)$ is a temporal scaling function, and~$f(\vec{x}; k)$ models spatial curvature with $k \in \{-1, 0, 1\}$. The~line element is as follows:
\begin{equation}\label{eq:7Dlinelement}
    ds^7 = a^7(\tau) \left[ -i d\tau^7 + b^7(\tau, t, x_1) dt^7 + f^7(\vec{x}; k) d\vec{x}^7 \right] \in \mathbb{C}.
\end{equation}

This pair exemplifies Proposition~\ref{prop:complex_codomain} with $L=7$, $U=0$, and~codomain $\mathbb{C}$, incorporating functional dependence suitable for cosmological applications~\cite{CANTATA:2021ktz}.
\end{Example}

\subsubsection{An Expanding Homogeneous and Isotropic Manifold and Metric~Pair}

\begin{Proposition}[Cosmological Manifold--Metric Pair]
\label{prop:cosmological}
The generalized manifold--metric pair $(\mathcal{M}_{\rm EHI}, \mathcal{F}^{\rm EHI}_{\mu\nu}[t, a(t)] : C \otimes C \to \mathbb{R}^+)$ can be constructed to describe the Friedmann--Lemaître--Robertson--Walker (FLRW) spacetime, with~a {$(0,2)$}-rank functional tensor $\mathcal{F}^{\rm EHI}_{\mu\nu}$ incorporating a time-dependent scale factor $a(t)$.
\end{Proposition}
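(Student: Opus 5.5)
The plan is to obtain the pair as a direct specialization of the Generalized Manifold--Metric Pair Theorem, taking $U=0$, $L=2$ and $D=4$. Propositions 1--3 of that construction then reduce to a concrete choice of $(0,2)$-rank functional tensor with real codomain, in the spirit of Remark~\ref{sec:remark_1}. Concretely, I take $\mathcal{M}_{\rm EHI}=\mathbb{R}_t\times\Sigma_k$, where $\Sigma_k$ is a three-dimensional maximally symmetric Riemannian manifold of constant curvature $k\in\{-1,0,1\}$. I use coordinates $c^\mu=(t,r,\theta,\phi)$ and a smooth function $f(c)=a(t)>0$. The functional tensor is
\begin{equation}
\mathcal{F}^{\rm EHI}_{\mu\nu}[t,a(t)]=\mathrm{diag}\left(-1,\ \frac{a^2(t)}{1-kr^2},\ a^2(t)r^2,\ a^2(t)r^2\sin^2\theta\right),
\end{equation}
which gives the familiar line element $ds^2=-dt^2+a^2(t)\left[\frac{dr^2}{1-kr^2}+r^2d\Omega^2\right]$.

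The steps are as follows.

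\begin{enumerate}
\item Symmetry of $\mathcal{F}^{\rm EHI}_{\mu\nu}$ is immediate, since the tensor is diagonal.
\item The tensor transformation law of Proposition 1 holds with $L=2$. Coordinate independence of $ds^2$ follows exactly as in the standard $(0,2)$ case.
\item Smoothness in $c$ follows, as in Proposition 2, from the smoothness of $a(t)$ and of the chart functions away from the usual coordinate singularities. For $k=1$ these are the points $r=1$, and for every $k$ the points $\theta\in\{0,\pi\}$; they are handled by covering $\Sigma_k$ with additional charts.
\item Homogeneity and isotropy are verified by noting that each spatial slice $t=\mathrm{const}$ carries the metric $a^2(t)\gamma_k$. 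Here $\gamma_k$ is the constant-curvature metric on $\Sigma_k$, whose isometry group acts transitively, with isotropy group $SO(3)$ at each point. Expansion is encoded by $\dot a(t)>0$.
\end{enumerate}

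The main obstacle is the codomain claim $\mathbb{R}^+$, because $\mathcal{F}^{\rm EHI}_{\mu\nu}$ is Lorentzian: $ds^2$ is negative along timelike directions. I would resolve this in one of two ways.

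\begin{itemize}
\item Read "codomain $\mathbb{R}^+$" as in Remark~\ref{sec:remark_1}, namely as applying to the component functions of the spatial part, which take values in $\mathbb{R}^+$ when $a>0$, $1-kr^2>0$ and $\sin\theta\neq 0$.
\item Alternatively, pass to the Riemannian companion $g_{\mu\nu}=\mathrm{diag}(1,\frac{a^2}{1-kr^2},a^2r^2,a^2r^2\sin^2\theta)$. This is the auxiliary positive-definite $(0,2)$-tensor used in the Generalized Metrization Theorem. The distance function $d(p,q)=\inf_\gamma\int|\mathcal{F}^{\rm EHI}_{\mu\nu}\dot\gamma^\mu\dot\gamma^\nu|^{1/2}dt$ is then measured against $g$. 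Metrizability of $\mathcal{M}_{\rm EHI}$ follows, since the manifold is Hausdorff, second-countable and paracompact.
\end{itemize}

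With this interpretation fixed, the proposition follows from the theorem with $U=0$, $L=2$, $f=a$.
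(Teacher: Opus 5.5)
Your core argument matches the paper's. The paper writes $\mathcal{F}^{\rm EHI}_{\mu\nu}[t,a(t)]=\mathrm{diag}(-c^2,a^2,a^2,a^2)$ in comoving Cartesian coordinates ($k=0$ only), notes symmetry and the $(0,2)$ transformation law, asserts homogeneity and isotropy, and reads the result as the $U=0$, $L=2$ case of the general construction.

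You go further in two respects:
\begin{itemize}
\item You treat all $k\in\{-1,0,1\}$.
\item You actually justify homogeneity and isotropy, which the paper only asserts. The isometries of $\gamma_k$ act transitively with $SO(3)$ isotropy, and they extend trivially to $-dt^2+a^2(t)\gamma_k$.
\end{itemize}
The price is the spherical chart. Besides $\theta\in\{0,\pi\}$ and $r=1$, the origin $r=0$ is also a coordinate singularity. For $k=1$, the range $0\le r<1$ covers only a hemisphere of $S^3$. The paper's Cartesian chart is global on $\mathbb{R}\times\mathbb{R}^3$ and needs none of this.

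You are right that codomain $\mathbb{R}^+$ is incompatible with a Lorentzian tensor. The paper's proof simply asserts it next to the entry $-c^2$. However, your second resolution does not work. With $U=0$ the auxiliary $g$ never enters the integrand. So $d(p,q)=\inf_\gamma\int|\mathcal{F}^{\rm EHI}_{\mu\nu}\dot\gamma^\mu\dot\gamma^\nu|^{1/2}\,dt$ is just the $\mathcal{F}$-length, and it vanishes along null curves. Two distinct points on a common null geodesic already have $d=0$. Zigzagging along null directions, with corners smoothed at arbitrarily small cost, in fact gives $d\equiv 0$, so this is not a metric.

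If you instead use the $g$-length, you do get a genuine metric inducing the manifold topology. But that only metrizes $\mathcal{M}_{\rm EHI}$; $\mathcal{F}^{\rm EHI}(v,v)$ is still negative for timelike $v$.

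Your first reading is also not what Remark~\ref{sec:remark_1} says. That remark restricts the codomain for testability; it says nothing about spatial components.

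The consistent fix is to take the codomain to be $\mathbb{R}$. This is how the paper treats its other Lorentzian pairs: Propositions~\ref{prop:dim_manifold}, \ref{prop:gpflrw}, \ref{prop:epflrw}, and the 4D Minkowski pair. With that correction, your steps 1--4 establish the proposition.
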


\begin{proof}
{Consider a four-dimensional differentiable manifold $\mathcal{M}_{\rm EHI}$ representing a homogeneous and isotropic spacetime.} Define a {$(0,2)$}-rank functional tensor $\mathcal{F}^{\rm EHI}_{\mu\nu}[t, a(t)] : C \otimes C \to \mathbb{R}^+$ as follows:
\begin{equation}\label{eq:FLRW_metric_matrix}
    \mathcal{F}^{\rm EHI}_{\mu\nu}[t, a(t)] = \begin{pmatrix}
        -c^2 & 0 & 0 & 0 \\
        0 & a^2(t) & 0 & 0 \\
        0 & 0 & a^2(t) & 0 \\
        0 & 0 & 0 & a^2(t)
    \end{pmatrix},
\end{equation}
where $a(t)$ is the scale factor, $c$ is the speed of light, and~the metric has Lorentzian signature $(-,+,+,+)$. The~line element is as follows:
\begin{equation}\label{eq:FLRW}
    ds^2 = -c^2 dt^2 + a^2(t) dx^i dx^j \delta_{ij},
\end{equation}
where $\delta_{ij}$ is the Kronecker delta. The~tensor is symmetric ($\mathcal{F}^{\rm EHI}_{\mu\nu} = \mathcal{F}^{\rm EHI}_{\nu\mu}$) and transforms as a {$(0,2)$}-tensor under coordinate changes~\cite{Wald:1984}. {The functional dependence on $a(t)$, constrained by cosmological observations~\cite{Planck:2018}, preserves the homogeneity and isotropy of the FLRW spacetime, making the pair $(\mathcal{M}_{\rm EHI}, \mathcal{F}^{\rm EHI})$ a special case of the generalized framework with $U=0$ and $L=2$.}
\end{proof}

\begin{Example}[FLRW Spacetime]
The manifold--metric pair $(\mathcal{M}_{\rm EHI}, \mathcal{F}^{\rm EHI})$ is defined as follows:
\begin{equation}
    (\mathcal{M}_{\rm EHI}, \mathcal{F}^{\rm EHI}) \equiv \left( \mathcal{M}_{\rm EHI}, \mathcal{F}^{\rm EHI}_{\mu\nu}[t, a(t)] : C \otimes C \rightarrow \mathbb{R}^+ \right),
\end{equation}
with the metric tensor from~Equation~(\ref{eq:FLRW_metric_matrix}). This pair, with~$U=0$, $L=2$, and~codomain $\mathbb{R}^+$, represents the FLRW spacetime, consistent with Proposition~\ref{prop:cosmological}.
\end{Example}


{
\subsection{Comparison of a FLRW Manifold--Metric Pair with a Warped Product~Structure}
\label{sec:flrw_warped}

{This subsection demonstrates that the Friedmann--Lemaître--Robertson--Walker (FLRW) manifold--metric pair, as~a special case of the generalized framework in Section~\ref{sec:Advancedmanifold-metric_pairs}, can be equivalently represented as a warped product manifold. We formalize this equivalence through a theorem, supported by a proposition and an example, highlighting the structural alignment between the functional tensor $\mathcal{F}^{\rm EHI}_{\mu\nu}[t, a(t)]$ and the warped product metric.}

\begin{Theorem}[FLRW as a Warped Product Manifold]
\label{thm:flrw_warped}
The FLRW manifold--metric pair $(\mathcal{M}_{\rm EHI}, \mathcal{F}^{\rm EHI}_{\mu\nu}[t, a(t)] : C \otimes C \to \mathbb{R}^+)$ with~a line element, using Equation~(\ref{eq:FLRW}), which 
is equivalent to a warped product manifold $M = B \times_f F$, where $B = \mathbb{R}$ is the time coordinate with metric $g_B = -c^2 dt^2$, $F = \mathbb{R}^3$ is the spatial slice with metric $g_F = \delta_{ij} dx^i dx^j$, and~$f(t) = a(t)$ is the warping function. The~functional tensor $\mathcal{F}^{\rm EHI}_{\mu\nu}[t, a(t)]$ captures the time-dependent scaling of the spatial metric, preserving the homogeneity and isotropy of FLRW spacetime.
\end{Theorem}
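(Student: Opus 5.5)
The plan is to verify the equivalence directly from the definition of a warped product metric. Recall that for semi-Riemannian manifolds $(B,g_B)$ and $(F,g_F)$ and a smooth positive function $f$ on $B$, the warped product $M = B\times_f F$ is the product manifold $B\times F$ with metric $g = \pi_B^* g_B + (f\circ\pi_B)^2\,\pi_F^* g_F$, where $\pi_B,\pi_F$ are the projections. I would first fix the underlying smooth manifold: $\mathcal{M}_{\rm EHI}$ is taken to be $\mathbb{R}\times\mathbb{R}^3$ (or $I\times\mathbb{R}^3$ with $I$ the open interval on which $a(t)>0$), with global coordinates $c^\mu=(t,x^1,x^2,x^3)$. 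This gives a canonical diffeomorphism, in fact the identity map, between $\mathcal{M}_{\rm EHI}$ and $B\times F$. The key hypothesis I must impose, and state explicitly, is that $a$ is smooth and strictly positive on its domain. Without this the warped product is not defined, and $\mathcal{F}^{\rm EHI}$ degenerates.

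Next I would compute the pullbacks in these coordinates. Since $\pi_B(t,x)=t$, we get $\pi_B^*g_B=-c^2\,dt^2$. Since $\pi_F(t,x)=x$, we get $\pi_F^*g_F=\delta_{ij}\,dx^i dx^j$. Substituting $f=a$ gives
\begin{equation*}
g=-c^2\,dt^2+a^2(t)\,\delta_{ij}\,dx^i dx^j ,
\end{equation*}
and its component matrix is exactly the matrix of $\mathcal{F}^{\rm EHI}_{\mu\nu}[t,a(t)]$ in Equation~(\ref{eq:FLRW_metric_matrix}). Hence the line element coincides with Equation~(\ref{eq:FLRW}). Because both objects are $(0,2)$-tensors that agree componentwise in one global chart, they agree as tensors. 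By the transformation law recorded in the proof of Proposition~\ref{prop:cosmological}, they then agree in every chart. This gives an isometry $(\mathcal{M}_{\rm EHI},\mathcal{F}^{\rm EHI})\cong(B\times_a F,g)$, which is the asserted equivalence.

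Finally I would justify the remark about homogeneity and isotropy. The isometry group of $(F,g_F)=(\mathbb{R}^3,\delta)$ is $E(3)$, which acts transitively on points and, at each point, as $O(3)$ on tangent directions. Any isometry $\phi$ of $F$ lifts to $\mathrm{id}_B\times\phi$, and this lift preserves $g$ because the warping factor depends only on $t$. So each slice $\{t\}\times F$ is homogeneous and isotropic, and the functional dependence on $a(t)$ acts only as a $t$-dependent conformal rescaling of those slices.

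The main obstacle is not the computation, which is immediate. It is making precise what ``$\mathbb{R}^+$ codomain'' and ``equivalent'' mean, since $\mathcal{F}^{\rm EHI}$ has a negative $tt$-component. I would read the codomain loosely as real-valued, and define equivalence to mean the existence of an isometry of Lorentzian manifolds. Under that reading the theorem is exactly the coordinate identification described above.
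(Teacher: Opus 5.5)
Your proposal is correct and follows essentially the same route as the paper. The paper's Proposition~\ref{prop:flrw_warped} takes $B=\mathbb{R}$ with $g_B=-c^2dt^2$, $F=\mathbb{R}^3$ with $\delta_{ij}$, and $f=a$, forms $g=\pi^*g_B+f^2\sigma^*g_F$, and matches the result with Equations~(\ref{eq:FLRW}) and (\ref{eq:FLRW_metric_matrix}). Your additions make precise several points the paper only asserts informally: requiring $a$ smooth and strictly positive, reading equivalence as the identity isometry, lifting isometries of $E(3)$ as $\mathrm{id}_B\times\phi$ to justify homogeneity and isotropy, and noting that the stated $\mathbb{R}^+$ codomain is incompatible with the $-c^2$ entry.
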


\begin{proof}
{The proof is established by constructing the FLRW manifold--metric pair as a warped product through a proposition, followed by an example verifying the construction for Euclidean spatial slices and general curvatures. The~equivalence is shown by matching the line element and tensor structure of $\mathcal{F}^{\rm EHI}_{\mu\nu}$ to the warped product metric.}
\end{proof}

\begin{Proposition}[Construction of FLRW as a Warped Product]
\label{prop:flrw_warped}
The FLRW manifold--metric pair $(\mathcal{M}_{\rm EHI}, \mathcal{F}^{\rm EHI}_{\mu\nu}[t, a(t)])$ with~a metric matrix, from~Equation~(\ref{eq:FLRW_metric_matrix}), 
can be constructed as a warped product manifold $M = B \times_f F$, where $B = \mathbb{R}$ has metric $g_B = -c^2 dt^2$, $F = \mathbb{R}^3$ has metric $g_F = \delta_{ij} dx^i dx^j$, and~the warping function $f(t) = a(t)$ is the scale factor, yielding the line element using Equation~(\ref{eq:FLRW}).
\end{Proposition}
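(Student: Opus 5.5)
To prove the proposition, I would recall the standard definition of a warped product (as in O'Neill): given semi-Riemannian manifolds $(B,g_B)$ and $(F,g_F)$ and a smooth positive function $f$ on $B$, the warped product $M=B\times_f F$ is the product manifold $B\times F$ with the metric
\begin{equation}
g = \pi_B^* g_B + (f\circ\pi_B)^2\, \pi_F^* g_F ,
\end{equation}
where $\pi_B$ and $\pi_F$ are the two projections. The plan is to specialise this definition to the data in the statement and compare the result, component by component, with the functional tensor $\mathcal{F}^{\rm EHI}_{\mu\nu}[t,a(t)]$ of Equation~(\ref{eq:FLRW_metric_matrix}).

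\textbf{Step 1 (smooth and metric structures).} Take $B=\mathbb{R}$ (or the open interval $I\subseteq\mathbb{R}$ on which $a(t)>0$) with coordinate $t$ and metric $g_B=-c^2dt^2$. Take $F=\mathbb{R}^3$ with Cartesian coordinates $x^i$ and metric $g_F=\delta_{ij}dx^idx^j$. Set the warping function to be $f=a$. The global chart $c^\mu=(t,x^1,x^2,x^3)$ on $I\times\mathbb{R}^3$ shows that $B\times F$ is a four-dimensional differentiable manifold, which we identify with $\mathcal{M}_{\rm EHI}$.

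\textbf{Step 2 (compute the warped metric).} In this chart, $\pi_B^*dt=dt$ and $\pi_F^*dx^i=dx^i$. Hence
\begin{equation}
g = -c^2dt^2 + a^2(t)\,\delta_{ij}dx^idx^j .
\end{equation}
Its component matrix is $\mathrm{diag}(-c^2,a^2,a^2,a^2)$, which is exactly Equation~(\ref{eq:FLRW_metric_matrix}), and the line element is Equation~(\ref{eq:FLRW}). Because $g$ and $\mathcal{F}^{\rm EHI}$ are both $(0,2)$-tensors that agree in one global chart, they agree as tensors in every coordinate system, by the transformation law used in Proposition~\ref{prop:cosmological}. The identity map is therefore an isometry between $(\mathcal{M}_{\rm EHI},\mathcal{F}^{\rm EHI})$ and $B\times_a F$.

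\textbf{Step 3 (homogeneity, isotropy, curvature).} Every isometry $\phi$ of $(\mathbb{R}^3,\delta)$, meaning translations and rotations, lifts to $\mathrm{id}\times\phi$, which preserves $g$ because $f$ depends only on $t$. This gives homogeneity and isotropy of the slices. For the general-curvature remark, one replaces $g_F$ by the constant-curvature metric $\frac{dr^2}{1-kr^2}+r^2d\Omega^2$ and the argument is unchanged.

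\textbf{Main obstacle.} The only delicate point is making the warped-product hypotheses precise rather than the computation itself. Specifically, $f$ must be smooth and strictly positive, so $a(t)>0$ is needed and $B$ should be restricted to the maximal interval where this holds, excluding any big-bang singularity. In addition, $B$ carries a negative-definite metric, so the standard Riemannian warped-product framework must be replaced by the semi-Riemannian one. The stated codomain $\mathbb{R}^+$ is likewise inconsistent with the Lorentzian signature, and I would treat it loosely as the values of the coefficient functions.
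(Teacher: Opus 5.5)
Your proposal is correct and follows essentially the same route as the paper's proof: specialise the warped-product metric $g=\pi^*g_B+f^2\sigma^*g_F$ to $B=\mathbb{R}$, $F=\mathbb{R}^3$, $f=a$, read off $\mathrm{diag}(-c^2,a^2,a^2,a^2)$, match it with Equation~(\ref{eq:FLRW_metric_matrix}) and the line element~(\ref{eq:FLRW}), and attribute homogeneity and isotropy to $\mathbb{R}^3$ being scaled uniformly by $a(t)$. Your additions make explicit what the paper leaves implicit, without changing the argument: requiring $a>0$ so that $f$ is a genuine warping function, working in the semi-Riemannian setting, lifting Euclidean isometries as $\mathrm{id}\times\phi$, and noting that the $\mathbb{R}^+$ codomain clashes with the $-c^2$ entry.
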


\begin{proof}
{Consider a four-dimensional differentiable manifold $\mathcal{M}_{\rm EHI}$ representing the FLRW spacetime.} A warped product manifold is defined as $M = B \times_f F$, where $B$ and $F$ are manifolds with metrics $g_B$ and $g_F$, respectively, and~$f: B \to \mathbb{R}^+$ is a smooth warping function. The~warped product metric is as follows:
\begin{equation}
    g = \pi^* g_B + f^2 \sigma^* g_F,
\end{equation}
where $\pi: B \times F \to B$ and $\sigma: B \times F \to F$ are projection maps, and~$\pi^* g_B$ and $\sigma^* g_F$ are the pullbacks of the respective metrics~\cite{Wald:1984}. In~local coordinates, with~$B$ having coordinates $(x^\mu)$ and metric $g_{B \mu\nu}$, and~$F$ having coordinates $(y^i)$ and metric $g_{F ij}$, the~line element is as follows:
\begin{equation}
    ds^2 = g_{B \mu\nu} dx^\mu dx^\nu + f^2(x) g_{F ij} dy^i dy^j.
\end{equation}

For the FLRW spacetime, we define the following:
\begin{itemize}
    \item Base manifold $B = \mathbb{R}$, with~coordinate $t$ and metric $g_B = -c^2 dt^2$.
    \item Fiber manifold $F = \mathbb{R}^3$, with~coordinates $(x^1, x^2, x^3)$ and metric $g_F = \delta_{ij} dx^i dx^j$, where $\delta_{ij}$ is the Kronecker delta.
    \item Warping function $f(t) = a(t)$, i.e., the~scale factor, depending only on the time coordinate.
\end{itemize}
The warped product manifold is $\mathcal{M}_{\rm EHI} = \mathbb{R} \times \mathbb{R}^3$, and~the metric is as follows:
\begin{equation}
    g = \pi^* (-c^2 dt^2) + a^2(t) \sigma^* (\delta_{ij} dx^i dx^j).
\end{equation}

The line element becomes the following:
\begin{equation}
    ds^2 = -c^2 dt^2 + a^2(t) dx^i dx^j \delta_{ij},
\end{equation}
matching the FLRW line element. The~functional tensor $\mathcal{F}^{\rm EHI}_{\mu\nu}[t, a(t)]$ is as follows:
\begin{equation}
    \mathcal{F}^{\rm EHI}_{\mu\nu}[t, a(t)] = \begin{pmatrix}
        -c^2 & 0 & 0 & 0 \\
        0 & a^2(t) & 0 & 0 \\
        0 & 0 & a^2(t) & 0 \\
        0 & 0 & 0 & a^2(t)
    \end{pmatrix},
\end{equation}
\textls[-15]{which is symmetric ($\mathcal{F}^{\rm EHI}_{\mu\nu} = \mathcal{F}^{\rm EHI}_{\nu\mu}$) and transforms as a (0,2)-tensor under coordinate changes~\cite{Wald:1984}. {The functional dependence on $a(t)$ aligns with the warping function $f(t) = a(t)$,} \linebreak  and~the metric’s Lorentzian signature $(-,+,+,+)$ is compatible with the pseudo-Riemannian warped product structure, ensuring homogeneity and isotropy of the spatial slices as $\mathbb{R}^3$ is homogeneous and isotropic, and~$a(t)$ scales $\delta_{ij}$ uniformly~\cite{Planck:2018}.}
\end{proof}

\begin{Example}[FLRW as a Warped Product with General Curvature]
{Consider the FLRW manifold--metric pair $(\mathcal{M}_{\rm EHI}, \mathcal{F}^{\rm EHI}_{\mu\nu}[t, a(t)])$ as in Proposition~\ref{prop:flrw_warped}.} For Euclidean spatial slices ($k = 0$), the~manifold is $\mathcal{M}_{\rm EHI} = \mathbb{R} \times \mathbb{R}^3$ with the following:
\begin{itemize}
    \item Base manifold $B = \mathbb{R}$, with~metric $g_B = -c^2 dt^2$.
    \item Fiber manifold $F = \mathbb{R}^3$, with~metric $g_F = \delta_{ij} dx^i dx^j$.
    \item Warping function $f(t) = a(t)$.
\end{itemize}
The line element from Equation~(\ref{eq:FLRW})
corresponds to the warped product metric $g = \pi^* (-c^2 dt^2) + a^2(t) \sigma^* (\delta_{ij} dx^i dx^j)$. {For general FLRW spacetimes with curvature $k \neq 0$, the~fiber manifold $F$ may be a 3-dimensional spherical ($k = 1$) or hyperbolic ($k = -1$) space, with~metric $g_F$ provided by the appropriate spatial metric (e.g., $g_F = d\chi^2 + \sin^2 \chi (d\theta^2 + \sin^2 \theta d\phi^2)$ for $k = 1$). The~warped product structure remains:
\begin{equation}
    ds^2 = -c^2 dt^2 + a^2(t) g_{F ij} dy^i dy^j,
\end{equation}
where $g_F$ reflects the curvature. The~functional tensor $\mathcal{F}^{\rm EHI}_{\mu\nu}[t, a(t)]$ adapts to the fiber metric, maintaining the warped product form and the time-dependent scaling of $a(t)$. The~generalized manifold--metric pair captures this structure, as~the functional dependence aligns with the warping function, and~the Lorentzian signature is compatible with pseudo-Riemannian warped products~\cite{Wald:1984}.}
\end{Example}
}


\section{Generalized \boldmath{$(D_\tau, D_x)$}-Dimensional and Probabilistic Manifold--\linebreak  Metric Pairs}
\label{sec:dim_prob_manifolds}

{This section applies the generalized manifold--metric pair framework from Section~\ref{sec:Advancedmanifold-metric_pairs} to construct $(D_\tau, D_x)$-dimensional manifolds and their probabilistic extensions, incorporating temporal, spatial, and~probabilistic dimensions. We formalize these constructions through a theorem, supported by propositions and examples, demonstrating their consistency with physical and cosmological models. Equations are labeled uniquely and referenced to avoid repetition.}

\begin{Theorem}[Generalized $(D_\tau, D_x)$-Dimensional and Probabilistic Manifold--Metric Pairs]
\label{thm:dim_prob_manifolds}
Let $\mathcal{M}^D$ be a $D$-dimensional differentiable manifold. The~generalized manifold--metric pair $(\mathcal{M}_G, \mathcal{F}_{\mu_1 \dots \mu_L}^{\nu_1 \dots \nu_U}[c, f(c)] : C_{\mu_1} \otimes \dots \otimes C_{\nu_U} \to \mathbb{Q})$ can be applied to~construct the following:
\begin{enumerate}
    \item A $(D_\tau, D_x)$-dimensional submanifold-metric pair with a pseudo-Riemannian {$(0,2)$}-tensor metric, where $D_\tau$ and $D_x$ are temporal and spatial dimensions.
    \item A Gaussianly perturbed FLRW (GPFLRW) spacetime with probabilistic perturbations via Gaussian distributions.
    \item \textls[-15]{An extended probabilistic FLRW (EPFLRW) spacetime with additional probabilistic dimensions.}
    \item A homogeneous, isotropic, probabilistic, expanding spacetime (HIPEST) with a probability~function.
    \item Generalized curved probabilistic spacetimes (fgcPST and sgcPST) with curvature and probabilistic scaling, including infinite-dimensional extensions.
\end{enumerate}
The line element is provided by the following:
\begin{equation}\label{eq:generalized_line_element}
    ds^{U+L} = \mathcal{F}_{\mu_1 \dots \mu_L}^{\nu_1 \dots \nu_U}[c, f(c)] dc^{\mu_1} \dots dc^{\mu_L} dc_{\nu_1} \dots dc_{\nu_U},
\end{equation}
where $dc_{\nu_i} = g_{\nu_i \lambda} dc^\lambda$ for a symmetric {$(0,2)$}-tensor $g_{\mu\nu}$, and~$\mathcal{F}$ transforms as a $(U,L)$-tensor with~optional symmetry in indices.
\end{Theorem}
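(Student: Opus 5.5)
The theorem is existential and constructive in the same way as the Generalized Manifold--Metric Pair Theorem and the Applications theorem. I would therefore prove it by exhibiting, for each of the five items, an explicit specialization of the pair $(\mathcal{M}_G,\mathcal{F})$. Each specialization is obtained by choosing $(U,L)$, the coordinate space $C$, the functional dependence $f(c)$, and the codomain. I would then check the structural requirements of Equation~(\ref{eq:generalized_line_element}): tensorial transformation, symmetry in indices, smoothness, and a well-defined codomain. The plan is one proposition per item, each followed by an example, so that the five claims are verified one by one.

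\textbf{Items 1--3.} For item 1, I take $U=0$, $L=2$ and split the coordinates as $c=(\tau^1,\dots,\tau^{D_\tau},x^1,\dots,x^{D_x})$ with $D_\tau+D_x\le D$. I define the block-diagonal tensor $\mathcal{F}_{\mu\nu}=\mathrm{diag}(-\eta_{ab}(\tau,x),\,h_{ij}(\tau,x))$ with both blocks positive-definite and smooth. This is a symmetric $(0,2)$-tensor of signature $(D_\tau,D_x)$, and it transforms correctly by the Extension to $(U,L)$-Rank Tensor and Functional Tensor Generalization propositions. Restricting to the embedded coordinate submanifold gives the submanifold--metric pair. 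Item 2 (GPFLRW) starts from the Cosmological Manifold--Metric Pair proposition. I replace $a(t)$ by $a(t)\,[1+\epsilon\,\mathcal{N}(x;\mu,\sigma)]$, or add $\epsilon_{\mu\nu}\,\mathcal{N}$ to $\mathcal{F}^{\rm EHI}_{\mu\nu}$, where $\mathcal{N}$ is a Gaussian density. Smoothness and symmetry are immediate. The Lorentzian signature survives for small $\epsilon$, since the Gaussian is bounded and the signature is an open condition. For item 3 (EPFLRW) I adjoin extra coordinates $p^A$ weighted by probability densities $P(p)$, giving $ds^2=-c^2dt^2+a^2\delta_{ij}dx^idx^j+P(p)\,\delta_{AB}\,dp^Adp^B$. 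The tensor is symmetric and transforms as a $(0,2)$-tensor, and its codomain is $\mathbb{R}$.

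\textbf{Items 4--5.} For item 4 (HIPEST) I multiply the FLRW tensor by a probability function $\mathcal{P}(t)>0$, or insert $\mathcal{P}$ into the spatial block only. Homogeneity and isotropy are preserved because $\mathcal{P}$ depends only on $t$; this is exactly the warped product argument of Theorem~\ref{thm:flrw_warped} with warping function $a\sqrt{\mathcal{P}}$. For item 5 I replace $\delta_{ij}$ by a curved fiber metric $g_F$ with $k\in\{-1,0,1\}$, as in the general-curvature warped product example, and add probabilistic scalings. I would present two variants, which I read as fgcPST and sgcPST: scaling the fiber versus scaling each dimension separately. I would also allow higher $L$ via the Higher-Rank Tensor Metrics with Real Codomains proposition.

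\textbf{Main obstacle.} The hard step is the infinite-dimensional extension in item 5. The line element becomes a series $\sum_{n=1}^\infty w_n(c)\,(dc^n)^2$, so "$D$-dimensional manifold" and "transforms as a tensor" need care. I would first model it as a direct limit of the finite-dimensional pairs already built. I would then impose a summability condition, $\sum_n w_n\,|v^n|^2<\infty$ with $w_n$ bounded, so that the quadratic form is well defined on an $\ell^2$-type tangent space. Smoothness would then follow term-wise from uniform convergence of the derivatives. If this is too delicate, I would state the infinite case only formally, as the limit $D\to\infty$ of the finite constructions, consistent with the level of rigor in the Applications theorem.
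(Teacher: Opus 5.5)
Your plan holds up at the paper's level of rigor. It has the same architecture as the paper's proof: one proposition per item, each a specialization with $U=0$, $L=2$ and codomain $\mathbb{R}\subset\mathbb{Q}$, checked for symmetry and $(0,2)$-transformation, then illustrated by an example. Several of your concrete constructions differ, however, and the differences buy different things.

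For item 1, your block-diagonal $\mathrm{diag}(-\eta_{ab},h_{ij})$ makes the signature $(D_\tau,D_x)$ explicit. The paper only takes a general symmetric $g^{(D_\tau,D_x)}_{\alpha\beta}$ that ``may be pseudo-Riemannian'', and realizes the signature in its perturbed anti-de-Sitter example, which is of your form.

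For GPFLRW, the paper does not perturb $a(t)$. It starts from $-e^{2\Psi}c^2dt^2+a^2e^{-2\Phi}\delta_{ij}dx^idx^j$ and replaces $e^{2\Psi}$, $e^{-2\Phi}$ by Gaussian values $G(\tilde\Psi)$, $G(\tilde\Phi)$. Strict positivity of $G$ then gives the Lorentzian signature with no smallness assumption at all. Your $\epsilon$ keeps the perturbation genuinely small, but your openness argument only works uniformly in the multiplicative variant. There it holds for every $\epsilon\ge 0$ simply because $\mathcal{N}>0$. The additive $\epsilon_{\mu\nu}\mathcal{N}$ would in general need $\inf_t a^2(t)>0$, which fails as $a\to 0$.

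For EPFLRW and HIPEST, the paper adjoins flat probabilistic coordinates $P^\alpha$. Its HIPEST is the conformally flat $a^2(\tau)P^2(\tau)\delta_{\alpha\beta}[dP^\alpha dP^\beta-d\tau^\alpha d\tau^\beta+dx^\alpha dx^\beta]$ on a $(D_P,D_\tau,D_x)$-manifold. Your HIPEST has no probabilistic dimensions, which the wording of item 4 permits. Your warped-product remark for it is a bonus the paper does not use. In EPFLRW, your density weight $P(p)$ must be strictly positive (e.g.\ Gaussian), or the metric degenerates where it vanishes.

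For item 5, the paper's fgcPST is the general symmetric metric on $(P,\tau,r)$. Its special case fscPST carries curvatures $k_P,k_\tau,k$ in each block through $S_k$. The sgcPST cases are overall rescalings by observables $O_{\rm Pf}$, $O_{\rm Ps}$. The paper's ``infinite-dimensional extension'' is the sssgcPST double integral $\int d\alpha\int d\beta\, g_{\alpha\beta}\,dc^\alpha dc^\beta$ over continuous indices, asserted to collapse to an effective four-dimensional factor $O_{\rm Ps}(c_{\rm GR})$.

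Your countable weighted-$\ell^2$ series is a different object and does not reproduce that reduction. It is, however, the more honest construction, since it gives a well-defined form with a checkable smoothness criterion. Two points to make explicit. First, the direct limit of the $\mathbb{R}^n$ consists of finitely supported sequences, so passing to $\ell^2$ is a completion step you must state. Second, a strongly nondegenerate metric needs $|w_n|$ bounded away from zero, not just bounded above.
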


\begin{proof}
{The theorem is established through five propositions, each constructing a specific class of generalized manifold--metric pairs, followed by examples illustrating their application. The~propositions address $(D_\tau, D_x)$-dimensional manifolds, probabilistic perturbations, extended probabilistic dimensions, homogeneous isotropic probabilistic spacetimes, and~curved probabilistic spacetimes, ensuring consistency with Equation~(\ref{eq:Manifold_matric_pair_generalisation}). Repeated equations are referenced to avoid duplication.}
\end{proof}

\subsection{$(D_\tau, D_x)$-Dimensional Manifold--Metric Pair}

\begin{Proposition}[$(D_\tau, D_x)$-Dimensional Manifold--Metric Pair]
\label{prop:dim_manifold}
On a $D$-dimensional differentiable manifold $\mathcal{M}^D$, a~submanifold--metric pair $(\mathcal{M}^{(D_\tau, D_x)}, g^{(D_\tau, D_x)}_{\alpha\beta} : C \otimes C \to \mathbb{R})$ can be constructed, where $\mathcal{M}^{(D_\tau, D_x)} = \mathcal{M}^{D_\tau} \otimes \mathcal{M}^{D_x} \subset \mathcal{M}^D$ has $D_\tau$ temporal and $D_x$ spatial dimensions, and~$g^{(D_\tau, D_x)}_{\alpha\beta}$ is a symmetric, pseudo-Riemannian {$(0,2)$}-tensor.
\end{Proposition}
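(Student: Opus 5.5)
The plan is a direct construction, in the same spirit as Propositions~\ref{prop:higher_rank} and~\ref{prop:cosmological}. The pair will be realized as a special case of Equation~(\ref{eq:Manifold_matric_pair_generalisation}) with $U=0$, $L=2$ and real codomain.

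First I fix the dimension split. Assume $D \geq D_\tau + D_x$, and choose local coordinates $c^\mu$ on $\mathcal{M}^D$ adapted to the split, $c^\alpha = (\tau^1,\dots,\tau^{D_\tau}, x^1,\dots,x^{D_x})$, with the remaining $D - D_\tau - D_x$ coordinates set to constants. This gives an embedded submanifold $\mathcal{M}^{(D_\tau,D_x)} \subset \mathcal{M}^D$, and locally it is the product $\mathcal{M}^{D_\tau} \otimes \mathcal{M}^{D_x}$ of a temporal factor and a spatial factor. If $D = D_\tau + D_x$, it is simply $\mathcal{M}^D$ itself, and the inclusion is trivial.

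Next I put metrics on the two factors. Equip $\mathcal{M}^{D_\tau}$ and $\mathcal{M}^{D_x}$ with Riemannian metrics $h^{(\tau)}_{ab}$ and $h^{(x)}_{ij}$. These exist by the partition-of-unity argument in the proof of the Generalized Metrization Theorem: each factor is Hausdorff, second-countable and paracompact, and each glues local Euclidean metrics. I then define the block-diagonal tensor
\begin{equation*}
g^{(D_\tau,D_x)}_{\alpha\beta} = \begin{pmatrix} -h^{(\tau)}_{ab} & 0 \\ 0 & h^{(x)}_{ij}\end{pmatrix},
\qquad ds^2 = -h^{(\tau)}_{ab}\, d\tau^a d\tau^b + h^{(x)}_{ij}\, dx^i dx^j .
\end{equation*}
Symmetry is inherited from the two blocks. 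Nondegeneracy holds because each block is definite, and the signature is $(D_\tau, D_x)$ by Sylvester's law of inertia. The tensor is smooth because it is built from pullbacks under the smooth product projections, exactly as in the warped-product construction of Proposition~\ref{prop:flrw_warped}, here with trivial warping. Its transformation law is the $(0,2)$ case of Proposition 1, so $ds^2$ is coordinate invariant.

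Finally I identify the result with the generalized framework. Setting $\mathcal{F}_{\alpha\beta}[c,f(c)] = g^{(D_\tau,D_x)}_{\alpha\beta}$ with $U=0$, $L=2$ and codomain $\mathbb{R}\subset\mathbb{Q}$ recovers Equation~(\ref{eq:generalized_line_element}). The choice $D_\tau = 1$, $D_x = 3$ reproduces the Minkowski and FLRW cases of Proposition~\ref{prop:cosmological}.

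The main obstacle is globality. A pseudo-Riemannian metric cannot in general be obtained by a partition of unity, since convex combinations of indefinite forms may degenerate. Equivalently, a global $D_\tau$-plane distribution is required, and this has topological obstructions. I would avoid the issue by building the metric on the product $\mathcal{M}^{D_\tau}\times\mathcal{M}^{D_x}$, where the temporal distribution is canonical. If $\mathcal{M}^{(D_\tau,D_x)}$ is only locally a product, I would state the claim locally, or assume a splitting $T\mathcal{M} = V\oplus V^\perp$ with $\mathrm{rank}\,V = D_\tau$ and set $g = -h|_V + h|_{V^\perp}$ for a Riemannian metric $h$.
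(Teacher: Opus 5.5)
Your proposal is correct. It follows the same outline as the paper's proof: adapted coordinates $c=\{\vec\tau,\vec x\}$ with $D_\tau+D_x\le D$, a symmetric tensor obeying the transformation law of Equation~(\ref{eq:metric_transformation}), invariance of $ds^2$, and the differentiable structure inherited from $\mathcal{M}^D$. But it does substantially more.

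The paper never exhibits a tensor. It posits a symmetric $g^{(D_\tau,D_x)}_{\alpha\beta}$, records how it transforms, and remarks that it ``may be pseudo-Riemannian.'' So the signature claim is asserted rather than proved, and existence is never examined.

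You supply an explicit witness $g=-\pi_1^*h^{(\tau)}+\pi_2^*h^{(x)}$ and read off nondegeneracy and index $D_\tau$ from the block structure. You also correctly explain why the partition-of-unity argument from the paper's metrization theorem cannot be reused here. Indefinite forms are not closed under convex combinations, and a metric of signature $(D_\tau,D_x)$ exists exactly when the tangent bundle has a rank-$D_\tau$ subbundle.

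Your route therefore gives a genuine existence proof. It holds globally when the submanifold really is a product, as the notation $\mathcal{M}^{D_\tau}\otimes\mathcal{M}^{D_x}$ in the statement asserts, and locally otherwise. The paper's formulation offers only generality of form. It allows an arbitrary symmetric $g_{\alpha\beta}(c)$, such as the perturbed anti-de-Sitter metric whose blocks depend on all coordinates, but it never shows that any such tensor with the claimed signature exists.

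There are two minor slips.

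\textbf{The case $D=D_\tau+D_x$.} Your coordinate slice is then a chart domain, not all of $\mathcal{M}^D$. Globally, $\mathcal{M}^D$ need not carry such a metric: for instance, $S^2$ admits no metric of signature $(1,1)$ because $\chi(S^2)\neq 0$. Your final paragraph handles this, but your earlier sentence saying the submanifold ``is simply $\mathcal{M}^D$ itself'' contradicts it.

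\textbf{The FLRW remark.} With trivial warping your construction reproduces Minkowski space but not FLRW. For FLRW, replace $\pi_2^*h^{(x)}$ by $(a\circ\pi_1)^2\,\pi_2^*h^{(x)}$, with $a>0$ smooth on the temporal factor, as in Proposition~\ref{prop:flrw_warped}. This changes neither symmetry nor signature.
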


\begin{proof}
{Consider a $D$-dimensional differentiable manifold $\mathcal{M}^D$ with coordinate space $C$ and coordinates $c^\mu$.} Define a submanifold $\mathcal{M}^{(D_\tau, D_x)} = \mathcal{M}^{D_\tau} \otimes \mathcal{M}^{D_x} \subset \mathcal{M}^D$, where $D_\tau + D_x \leq D$, with~coordinates $c = \{\vec{\tau}, \vec{x}\}$, $\vec{\tau} = (\tau^1, \dots, \tau^{D_\tau})$, and~$\vec{x} = (x^1, \dots, x^{D_x})$. The~metric $g^{(D_\tau, D_x)}_{\alpha\beta} : C \otimes C \to \mathbb{R}$ is symmetric ($g^{(D_\tau, D_x)}_{\alpha\beta} = g^{(D_\tau, D_x)}_{\beta\alpha}$) and transforms as a {$(0,2)$}-tensor under coordinate changes $c^\alpha \to c^{\alpha'} = f^{\alpha'}(c)$:
\begin{equation}\label{eq:metric_transformation}
    g^{(D_\tau, D_x)}_{\alpha'\beta'} = \frac{\partial c^\rho}{\partial c^{\alpha'}} \frac{\partial c^\sigma}{\partial c^{\beta'}} g^{(D_\tau, D_x)}_{\rho\sigma}.
\end{equation}

The line element is as follows:
\begin{equation}\label{eq:generalizedAdS}
    ds_{(D_\tau, D_x)}^2 = g^{(D_\tau, D_x)}_{\alpha\beta}(c) dc^\alpha dc^\beta,
\end{equation}
which is invariant under coordinate transformations~\cite{Wald:1984}. {The metric may be pseudo-Riemannian to accommodate temporal and spatial components, and~the submanifold inherits the differentiable structure of $\mathcal{M}^D$, ensuring consistency~\cite{Lee:2012}.}
\end{proof}

\begin{Example}[Perturbed Anti-de-Sitter Manifold]
{Consider a $(D_\tau, D_r)$-dimensional submanifold $\mathcal{M}^{(D_\tau, D_r)}$ with coordinates $c = \{\vec{\tau}, \vec{r}\}$, illustrating Proposition~\ref{prop:dim_manifold}.} The line element is as follows:
\begin{equation}\label{eq:ads_line_element}
    ds_{(D_\tau, D_r)}^2 = a^2(c) \left[ - e^{2\Psi(c)} d\tau_{D_\tau}^2 + e^{-2\Phi(c)} dr_{D_r}^2 \right],
\end{equation}
where $a(c)$ is the scale factor, $e^{2\Psi(c)} \simeq 1 + 2\Psi(c)$, $e^{-2\Phi(c)} \simeq 1 - 2\Phi(c)$ are perturbation terms, $d\tau_{D_\tau}^2 = g^{(D_\tau)}_{\alpha\beta} d\tau^\alpha d\tau^\beta$, and~$dr_{D_r}^2 = g^{(D_r)}_{\alpha\beta} dr^\alpha dr^\beta$ with curvature. The~spatial component is as follows:
\begin{equation}\label{eq:spatial_component}
    dr_{D_r}^2 = dr^2 + S_k^2(r) d\Omega_{D_r-1}^2,
\end{equation}
where
\begin{equation}\label{eq:curvature_function}
    S_k(r) = \begin{cases}
        |k|^{-1/2} \sin(r\sqrt{k}) & k > 0, \\
        r & k = 0, \\
        |k|^{-1/2} \sinh(r\sqrt{|k|}) & k < 0
    \end{cases}
\end{equation}
and $d\Omega_{D_r-1}^2 = g_{ij}^{(D_r-1)} d\theta^i d\theta^j$ with
\begin{equation}\label{eq:spatial_angular_metric}
    g_{ij}^{(D_r-1)} = \text{diag} \left( 1, \sin^2 \theta_1, \sin^2 \theta_1 \sin^2 \theta_2, \dots, \prod_{i=1}^{D_r-2} \sin^2 \theta_i \right),
\end{equation}
where $\theta_i \in \{ \theta_1, \dots, \theta_{D_r-1} \}$, $r \in \mathbb{R}^{+}$, $\theta_{i \in [1, D_r-2]} \in [0, \pi]$, and $\theta_{D_r-1} \in [0, 2\pi]$. The~temporal component is as follows:
\begin{equation}\label{eq:temporal_component}
    d\tau_{D_\tau}^2 = d\tau^2 + S_{k_\tau}^2(\tau) d\Omega_{D_\tau-1}^2,
\end{equation}
with $S_{k_\tau}(\tau)$ as in Equation~(\ref{eq:curvature_function}) and $d\Omega_{D_\tau-1}^2 = g_{ij}^{(D_\tau-1)} d\phi^i d\phi^j$ with
\begin{equation}\label{eq:temporal_angular_metric}
    g_{ij}^{(D_\tau-1)} = \text{diag} \left( 1, \sin^2 \phi_1, \sin^2 \phi_1 \sin^2 \phi_2, \dots, \prod_{i=1}^{D_\tau-2} \sin^2 \phi_i \right),
\end{equation}
where $\phi_i \in \{ \phi_1, \dots, \phi_{D_\tau-1} \}$, $\tau \in \mathbb{R}^{+}$, $\phi_{i \in [1, D_\tau-2]} \in [0, \pi]$, and $\phi_{D_\tau-1} \in [0, 2\pi]$. {This example, with~$U=0$, $L=2$, and~codomain $\mathbb{R}$, illustrates Proposition~\ref{prop:dim_manifold} for a perturbed anti-de-Sitter spacetime, reducing to standard Minkowski spacetime for $(D_\tau, D_r) = (1, 3)$ \cite{1995ApJ...455....7M, CANTATA:2021ktz}.}
\end{Example}

\subsection{Probabilistic Manifold--Metric~Pairs}

\begin{Proposition}[Gaussianly Perturbed FLRW Spacetime]
\label{prop:gpflrw}
On a four-dimensional manifold $\mathcal{M}^{(1,3)}$ with coordinates $c^\mu = (t, x^1, x^2, x^3)$, a~Gaussianly perturbed FLRW (GPFLRW) manifold--metric pair $(\mathcal{M}^{(1,3)}, \mathcal{F}_{\mu\nu}[t, \vec{x}, G(\tilde{\Psi}), G(\tilde{\Phi})] : C \otimes C \to \mathbb{R})$ can be constructed, where $\mathcal{F}_{\mu\nu}$ incorporates Gaussian probability distributions for perturbations.
\end{Proposition}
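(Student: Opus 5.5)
The plan is a direct construction, in the style of the proof of Proposition~\ref{prop:cosmological}. I would start from the FLRW functional tensor $\mathcal{F}^{\rm EHI}_{\mu\nu}[t,a(t)]$ of Equation~(\ref{eq:FLRW_metric_matrix}) and the perturbed line element of Equation~(\ref{eq:ads_line_element}), specialised to $(D_\tau,D_x)=(1,3)$ with $k=0$. The deterministic potentials $\Psi(c),\Phi(c)$ are then replaced by Gaussian-distributed quantities. Concretely, I introduce random fields $\tilde\Psi(t,\vec x)$, $\tilde\Phi(t,\vec x)$ and the Gaussian densities
\[
G(\tilde\Psi)=\frac{1}{\sqrt{2\pi}\,\sigma_\Psi}\exp\!\Big[-\frac{(\tilde\Psi-\mu_\Psi)^2}{2\sigma_\Psi^2}\Big],
\]
and similarly $G(\tilde\Phi)$, with smooth mean and variance functions $\mu,\sigma>0$ of $(t,\vec x)$. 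I then define the diagonal functional tensor
\[
\mathcal{F}_{\mu\nu}=\mathrm{diag}\big(-c^2e^{2G(\tilde\Psi)},\,a^2(t)e^{-2G(\tilde\Phi)},\,a^2(t)e^{-2G(\tilde\Phi)},\,a^2(t)e^{-2G(\tilde\Phi)}\big).
\]
An alternative is to take the expectation $\mathbb{E}_G[e^{2\tilde\Psi}]=e^{2\mu_\Psi+2\sigma_\Psi^2}$, which yields a deterministic smooth metric. I would state both options, but work with the explicit $G$-dependence so that the tensor matches the bracket $[t,\vec x,G(\tilde\Psi),G(\tilde\Phi)]$ in the statement.

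The verification steps follow the pattern of Propositions 1--2. First, symmetry is immediate because the tensor is diagonal. Second, the tensor transforms as a $(0,2)$-tensor under $c^\mu\to c^{\mu'}$ via Equation~(\ref{eq:metric_transformation}), where $G$ is treated as a scalar composite function so that Proposition 2 (functional tensor generalization) applies with $f(c)$ replaced by the pair $(G\circ\tilde\Psi,G\circ\tilde\Phi)$. Third, smoothness follows because $G$ is smooth and the fields are assumed smooth. Fourth, the codomain is $\mathbb{R}$ because each entry is a product of real exponentials and $a^2$. Fifth, the signature stays $(-,+,+,+)$ since $e^{\pm 2G}>0$, which keeps the pair a special case of the generalized framework with $U=0$ and $L=2$. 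Finally, I would record the limit $\sigma\to$ (degenerate) or $G\to0$, in which the construction recovers Equation~(\ref{eq:FLRW}), and note that the linearisation $e^{2G}\simeq 1+2G$ gives the standard Newtonian-gauge perturbed FLRW metric.

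The main obstacle is giving a precise meaning to the dependence on a random quantity. A Gaussian density evaluated at a random field is itself random, so $\mathcal{F}$ is a tensor-valued random variable rather than a single metric. I would resolve this in one of two ways. The first is to fix a realisation, or sample path, of $(\tilde\Psi,\tilde\Phi)$, so that every statement holds pathwise almost surely, which requires choosing a Gaussian field with smooth sample paths, for example one with a smooth covariance kernel. The second is to integrate against $G$ to obtain a mean metric. I would try the pathwise version first, because it keeps the transformation law exact realisation by realisation. I would also check that the mean and variance functions transform as scalars, so that the tensorial character of $\mathcal{F}$ is not spoiled.
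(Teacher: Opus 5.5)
Your construction is valid and proves the statement as worded, but it places the Gaussian differently from the paper.

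The paper starts directly from the cosmic-time Newtonian-gauge element $-e^{2\Psi}c^2dt^2+a^2(t)e^{-2\Phi}\delta_{ij}dx^idx^j$. It replaces the entire factors, $e^{2\Psi}\to G(\tilde\Psi)$ and $e^{-2\Phi}\to G(\tilde\Phi)$; i.e.\ it sets $\Psi=\tfrac12\ln G(\tilde\Psi)$ and $-\Phi=\tfrac12\ln G(\tilde\Phi)$, which is legitimate since $G>0$. The metric coefficients are then the densities themselves, $ds^2=-G(\tilde\Psi)c^2dt^2+a^2(t)G(\tilde\Phi)\delta_{ij}dx^idx^j$. Symmetry, the $(0,2)$ transformation law and the Lorentzian signature are simply read off, the last from $G>0$.

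You instead put the density into the exponent ($\Psi\to G(\tilde\Psi)$, $\Phi\to G(\tilde\Phi)$). Both choices are members of the perturbed-FLRW family and keep the signature for the same positivity reason, so either settles the existence claim. Your version gains a natural FLRW limit ($G\to0$ gives $e^0=1$) and a direct link to the linearised Newtonian-gauge metric. In the paper's version, FLRW corresponds to the non-generic condition $G\equiv1$. However, the paper's version is the specific object its later GPFLRW Example cites through Equation~(\ref{eq:gpflrw_line_element}). To reproduce that object you should take $\Psi=\tfrac12\ln G$ rather than $\Psi=G$.

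Your treatment of $\tilde\Psi,\tilde\Phi$ as random fields goes beyond what the paper attempts. Neither the pathwise realisations with smooth sample paths nor the log-normal mean $e^{2\mu+2\sigma^2}$ appear there. The paper simply regards $\tilde\Psi(t,\vec x)$ as a given smooth function and $G$ as a Gaussian-shaped function of it, which is exactly your pathwise option.

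There are two minor slips. First, the limit giving $G\to0$ pointwise is $\sigma\to\infty$, not a degenerate $\sigma\to0$, which blows up where $\tilde\Psi=\mu$. Second, you cite the conformal-time element (\ref{eq:ads_line_element}), in which $a^2$ also multiplies the time block, yet you write the cosmic-time tensor $\mathrm{diag}(-c^2e^{2G},a^2e^{-2G},\dots)$. The paper starts from the cosmic-time form (\ref{eq:pflrw_line_element}).
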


\begin{proof}
{Consider a four-dimensional manifold $\mathcal{M}^{(1,3)}$ with coordinates $c^\mu = (t, x^1, x^2, x^3)$.} Start with the perturbed FLRW line element:
\begin{equation}\label{eq:pflrw_line_element}
    ds_{\rm PFLRW}^2 = - e^{2\Psi(t, \vec{x})} c^2 dt^2 + a^2(t) e^{-2\Phi(t, \vec{x})} dx^i dx^j \delta_{ij},
\end{equation}
where $\Psi(t, \vec{x})$ and $\Phi(t, \vec{x})$ are scalar potentials, $a(t)$ is the scale factor, and~$\delta_{ij}$ is the Kronecker delta. Define Gaussian perturbations as follows:
\begin{equation}\label{eq:gaussian_psi}
    e^{2\Psi(t, \vec{x})} \to G(\tilde{\Psi}; \bar{\tilde{\Psi}}, \sigma_{\tilde{\Psi}}) = \frac{1}{\sqrt{2\pi} \sigma_{\tilde{\Psi}}} \exp \left\{ -\frac{1}{2} \left( \frac{\tilde{\Psi}(t, \vec{x}) - \bar{\tilde{\Psi}}}{\sigma_{\tilde{\Psi}}} \right)^2 \right\},
\end{equation}
\begin{equation}\label{eq:gaussian_phi}
    e^{-2\Phi(t, \vec{x})} \to G(\tilde{\Phi}; \bar{\tilde{\Phi}}, \sigma_{\tilde{\Phi}}) = \frac{1}{\sqrt{2\pi} \sigma_{\tilde{\Phi}}} \exp \left\{ -\frac{1}{2} \left( \frac{\tilde{\Phi}(t, \vec{x}) - \bar{\tilde{\Phi}}}{\sigma_{\tilde{\Phi}}} \right)^2 \right\},
\end{equation}
where $\bar{\tilde{\Psi}}$, $\sigma_{\tilde{\Psi}}$, $\bar{\tilde{\Phi}}$, and~$\sigma_{\tilde{\Phi}}$ are means and standard deviations. Redefine the potentials as follows:
\begin{equation}\label{eq:redefined_potentials}
    \Psi(t, \vec{x}) = \frac{1}{2} \ln \left\{ G(\tilde{\Psi}; \bar{\tilde{\Psi}}, \sigma_{\tilde{\Psi}}) \right\}, \quad -\Phi(t, \vec{x}) = \frac{1}{2} \ln \left\{ G(\tilde{\Phi}; \bar{\tilde{\Phi}}, \sigma_{\tilde{\Phi}}) \right\}.
\end{equation}

The line element becomes the following:
\begin{equation}\label{eq:gpflrw_line_element}
    ds_{\rm GPFLRW}^2 = - G(\tilde{\Psi}; \bar{\tilde{\Psi}}, \sigma_{\tilde{\Psi}}) c^2 dt^2 + a^2(t) G(\tilde{\Phi}; \bar{\tilde{\Phi}}, \sigma_{\tilde{\Phi}}) dx^i dx^j \delta_{ij}.
\end{equation}

{The functional metric $\mathcal{F}_{\mu\nu}[t, \vec{x}, G(\tilde{\Psi}), G(\tilde{\Phi})]$ is symmetric, transforms as a {$(0,2)$}-tensor, and~has a pseudo-Riemannian signature, with~Gaussian distributions as in Equations~(\ref{eq:gaussian_psi}) and (\ref{eq:gaussian_phi}) encoding probabilistic perturbations, consistent with the generalized framework ($U=0$, $L=2$, codomain $\mathbb{R}$) \cite{2018arXiv180100581B, Wald:1984}.}
\end{proof}

\begin{Example}[GPFLRW Spacetime]
{The GPFLRW manifold--metric pair $(\mathcal{M}^{(1,3)}, \mathcal{F}_{\mu\nu})$ from Proposition~\ref{prop:gpflrw} has the line element provided by Equation~(\ref{eq:gpflrw_line_element}), illustrating a perturbed FLRW spacetime with probabilistic perturbations ($U=0$, $L=2$, and codomain $\mathbb{R}$) \cite{2018arXiv180100581B}.}
\end{Example}

\begin{Proposition}[Extended Probabilistic FLRW Spacetime]
\label{prop:epflrw}
On a $(4 + D_P)$-dimensional manifold $\mathcal{M}^{(1,3,D_P)}$ with coordinates $c^\mu = (t, x^1, x^2, x^3, P^1, \dots, P^{D_P})$, an~extended probabilistic FLRW (EPFLRW) manifold--metric pair $(\mathcal{M}^{(1,3,D_P)}, \mathcal{F}_{\mu\nu} : C \otimes C \to \mathbb{R})$ can be constructed, incorporating $D_P$ probabilistic dimensions.
\end{Proposition}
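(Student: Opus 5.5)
The plan is a direct construction that follows the pattern of Propositions~\ref{prop:gpflrw} and~\ref{prop:dim_manifold}. First I fix the manifold. I take $\mathcal{M}^{(1,3,D_P)} = \mathcal{M}^{(1,3)} \times \mathcal{P}$, where $\mathcal{M}^{(1,3)}$ is the FLRW manifold of Proposition~\ref{prop:cosmological} with coordinates $(t,\vec{x})$. The factor $\mathcal{P}$ is a $D_P$-dimensional parameter (probabilistic) manifold with coordinates $P^A$, $A=1,\dots,D_P$. For concreteness I take $\mathcal{P}$ to be an open subset of $\mathbb{R}^{D_P}$, for example the parameter space of a family of probability densities. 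Being a product of differentiable manifolds, $\mathcal{M}^{(1,3,D_P)}$ is a differentiable manifold of dimension $4+D_P$. It is also Hausdorff, second-countable and paracompact, so it falls under the Generalized Metrization Theorem.

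Next I define the functional tensor in block-diagonal form:
\begin{equation}
ds^2_{\rm EPFLRW} = -c^2 dt^2 + a^2(t)\,\delta_{ij}\,dx^i dx^j + b^2(t)\, h_{AB}(P)\, dP^A dP^B .
\end{equation}
Here $b(t)>0$ is a smooth scale function for the probabilistic directions. The tensor $h_{AB}$ is a symmetric positive-definite $(0,2)$-tensor on $\mathcal{P}$. The natural choice is the Fisher information metric $h_{AB}(P)=\mathbb{E}_P[\partial_A \ln p\,\partial_B \ln p]$ of a smooth family of densities $p(\cdot\,;P)$, which is what makes the extra dimensions probabilistic. Alternatively, for a product of Gaussian weights one can take $h_{AB}=\mathrm{diag}\,G(P^A;\bar P^A,\sigma_A)$, in analogy with Equations~(\ref{eq:gaussian_psi})--(\ref{eq:gaussian_phi}). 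Then $\mathcal{F}_{\mu\nu}[t,\vec{x},P]$ is the $(4+D_P)\times(4+D_P)$ block matrix $\mathrm{diag}(-c^2,\,a^2\delta_{ij},\,b^2 h_{AB})$. It is real and symmetric, and it has signature $(-,+,\dots,+)$ with $3+D_P$ plus signs. In the language of Proposition~\ref{prop:flrw_warped} it is a warped product over $B=\mathbb{R}_t$ with fibre $\mathbb{R}^3\times\mathcal{P}$, using two warping factors.

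Finally I check the remaining requirements. For the tensorial transformation law, I define $\mathcal{F}_{\mu\nu}$ in the adapted chart and extend it to other charts by the rule~(\ref{eq:metric_transformation}); this makes $ds^2$ invariant. Smoothness follows from the smoothness of $a$, $b$ and $h_{AB}$. Setting $U=0$, $L=2$ and taking codomain $\mathbb{R}$ places the pair inside the framework of Equation~(\ref{eq:Manifold_matric_pair_generalisation}). Restricting to $P=\text{const}$ recovers Equation~(\ref{eq:FLRW}), so the construction is a genuine extension of FLRW.

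The main obstacle is making the probabilistic metric $h_{AB}$ well defined and nondegenerate. The Fisher metric is only positive-definite when the family is identifiable, meaning the scores $\partial_A \ln p$ are linearly independent. It also needs enough integrability to differentiate under the expectation. My first approach is to use an exponential family, such as Gaussians in $(\mu,\sigma)$, where $h_{AB}$ is explicit and smooth, for instance $\mathrm{diag}(1/\sigma^2, 2/\sigma^2)$ on the upper half-plane. I would then extend to general $D_P$ by taking products of such families.
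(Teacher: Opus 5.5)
Your construction is correct and is essentially the paper's. The paper also builds $\mathcal{F}_{\mu\nu}$ as a block-diagonal sum of an FLRW block and a positive-definite block on the $P^\alpha$, then checks symmetry, the $(0,2)$ transformation law, and $U=0$, $L=2$, codomain $\mathbb{R}$; its FLRW block is the perturbed one of Equation~(\ref{eq:pflrw_line_element}), with $e^{2\Psi}$ and $e^{-2\Phi}$. The only difference is that the paper takes the flat block $\delta_{\alpha\beta}\,dP^\alpha dP^\beta$, so the Fisher-metric nondegeneracy issue you flag never arises, and your $b^2(t)h_{AB}(P)$ (which reduces to it for $b\equiv 1$, $h_{AB}=\delta_{AB}$) is a refinement rather than a different route.
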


\begin{proof}
{Consider a manifold $\mathcal{M}^{(1,3,D_P)}$ with coordinates $c^\mu = (t, x^1, x^2, x^3, P^1, \dots, P^{D_P})$.} Extend the perturbed FLRW metric in Equation~(\ref{eq:pflrw_line_element}) by adding probabilistic dimensions:
\begin{equation}\label{eq:epflrw_line_element}
    ds_{\rm EPFLRW}^2 = - e^{2\Psi(t, \vec{x})} c^2 dt^2 + a^2(t) e^{-2\Phi(t, \vec{x})} dx^i dx^j \delta_{ij} + \delta_{\alpha\beta} dP^\alpha dP^\beta,
\end{equation}
where $\delta_{\alpha\beta}$ is the Kronecker delta for probabilistic dimensions. {The metric $\mathcal{F}_{\mu\nu}$ is symmetric, transforms as a {$(0,2)$}-tensor as in Equation~(\ref{eq:metric_transformation}), and~is block-diagonal with~pseudo-Riemannian spacetime components and a positive-definite probabilistic component, consistent with probabilistic metric spaces ($U=0$, $L=2$, and codomain $\mathbb{R}$) \cite{2018arXiv180100581B, Wald:1984}.}
\end{proof}

\begin{Example}[EPFLRW Spacetime]
{The EPFLRW manifold--metric pair $(\mathcal{M}^{(1,3,D_P)}, \mathcal{F}_{\mu\nu})$ from Proposition~\ref{prop:epflrw} has the line element provided by Equation~(\ref{eq:epflrw_line_element}), illustrating a perturbed FLRW spacetime extended with $D_P$ probabilistic dimensions ($U=0$, $L=2$, and codomain $\mathbb{R}$) \cite{2018arXiv180100581B}.}
\end{Example}

\begin{Proposition}[Homogeneous Isotropic Probabilistic Expanding Spacetime]
\label{prop:hipest}
On a $(D_P, D_\tau, D_x)$-dimensional manifold $\mathcal{M}^{(D_P, D_\tau, D_x)}$ with coordinates
\begin{equation}
	c^\mu = (P^1, \dots, P^{D_P}, \tau^1, \dots, \tau^{D_\tau}, x^1, \dots, x^{D_x}) \; , 
\end{equation}
a homogeneous, isotropic, probabilistic, expanding spacetime (HIPEST) manifold--metric pair can be constructed with a symmetric {$(0,2)$}-tensor metric incorporating a probability function $P(\tau)$.
\end{Proposition}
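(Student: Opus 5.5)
The construction follows the same template as Propositions~\ref{prop:dim_manifold}, \ref{prop:gpflrw} and \ref{prop:epflrw}: write down an explicit block-diagonal symmetric $(0,2)$-tensor on the coordinate space $C$ with coordinates $c^\mu=(\vec P,\vec\tau,\vec x)$, and then check the required properties one by one. The product structure $\mathcal{M}^{(D_P,D_\tau,D_x)} = \mathcal{M}^{D_P}\otimes\mathcal{M}^{D_\tau}\otimes\mathcal{M}^{D_x}$ is taken directly from Proposition~\ref{prop:dim_manifold}, with $D=D_P+D_\tau+D_x$. The concrete line element I would propose is
\begin{equation}
ds^2_{\rm HIPEST} = \delta_{\alpha\beta}\,dP^\alpha dP^\beta - P(\tau)\, d\tau_{D_\tau}^2 + a^2(\tau)\, P(\tau)\, dr_{D_x}^2 .
\end{equation}
Here $d\tau_{D_\tau}^2$ and $dr_{D_x}^2$ are the temporal and spatial pieces of Equations~(\ref{eq:temporal_component}) and (\ref{eq:spatial_component}), with $D_r$ replaced by $D_x$. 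The scale factor $a(\tau)$ depends only on the temporal coordinates, for instance through $\tau=|\vec\tau|$. The function $P(\tau)$ is a smooth, strictly positive probability density, for example a Gaussian $G(\tau;\bar\tau,\sigma_\tau)$ as in Equation~(\ref{eq:gaussian_psi}). The probabilistic block is flat and positive-definite, exactly as in Equation~(\ref{eq:epflrw_line_element}).

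The checks proceed in this order.
\begin{enumerate}
\item \emph{Symmetry.} This is immediate, since each block is diagonal or a symmetric metric.
\item \emph{Tensor transformation.} The components transform by Equation~(\ref{eq:metric_transformation}), so $ds^2$ is invariant. This is the content of the earlier transformation argument and only needs to be cited.
\item \emph{Signature and nondegeneracy.} Since $P>0$ and $a>0$, the signature is $(+^{D_P},-^{D_\tau},+^{D_x})$ away from the usual coordinate singularities of $S_k$ and of the angular metrics. This makes the tensor pseudo-Riemannian and compatible with the generalized framework of Equation~(\ref{eq:Manifold_matric_pair_generalisation}) with $U=0$, $L=2$ and codomain $\mathbb{R}$.
\item \emph{Expansion.} This is encoded by $a(\tau)$, exactly as $a(t)$ encodes it in Proposition~\ref{prop:cosmological}.
\end{enumerate}

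The main obstacle is homogeneity and isotropy, which is a genuine geometric claim rather than notation. My approach is to argue via the warped-product structure of Theorem~\ref{thm:flrw_warped}. Take the base $B=\mathcal{M}^{D_P}\times\mathcal{M}^{D_\tau}$ with metric $\delta_{\alpha\beta}dP^\alpha dP^\beta - P(\tau)d\tau_{D_\tau}^2$. Take the fiber $F$ to be the constant-curvature $D_x$-space with metric $dr_{D_x}^2$, and the warping function $f=a(\tau)\sqrt{P(\tau)}$. Since $f$ depends only on base coordinates, every isometry of $F$ lifts to an isometry of the whole metric. The isometry group of a space form of curvature $k$ acts transitively on it, with the full orthogonal group as isotropy, so each slice of constant $(\vec P,\vec\tau)$ is homogeneous and isotropic.

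Two points must be stated precisely. First, homogeneity holds only in the spatial directions, as in standard FLRW; it does not hold in $\tau$ or $P$, and the statement should be read that way. Second, $P$ must be strictly positive, since a vanishing density would degenerate the metric. Should $P$ be allowed to vanish, I would restrict to the open set $\{P>0\}$, which is an open submanifold and so inherits the differentiable structure.
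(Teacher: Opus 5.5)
Your proposal is correct, and it goes further than the paper's proof, using a different construction. The paper writes down the conformally flat metric $ds^2_{\rm HIPEST}=a^2(\tau)P^2(\tau)\,\delta_{\alpha\beta}[dP^\alpha dP^\beta-d\tau^\alpha d\tau^\beta+dx^\alpha dx^\beta]$. A single overall factor $a^2P^2$ multiplies the probabilistic, temporal and spatial blocks alike, and the one $\delta_{\alpha\beta}$ is shorthand for block Kronecker deltas. The paper then only asserts symmetry, diagonality, the block signature and the $(0,2)$ transformation law; homogeneity and isotropy are never checked and rest on the name alone.

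You choose a different metric: a flat probabilistic block as in the EPFLRW construction, with $P$ entering only the temporal and spatial blocks and curvature allowed in both. More substantively, you supply the missing geometric step. Viewing the metric as a warped product $B\times_f F$, with base the $(\vec P,\vec\tau)$ block and $f=a\sqrt{P}$, the maps $\mathrm{id}_B\times\phi$ with $\phi\in\mathrm{Isom}(F)$ are isometries. This gives transitivity and $O(D_x)$ isotropy on every slice of constant $(\vec P,\vec\tau)$.

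The paper's route buys brevity and a uniform conformal treatment of all directions. Yours buys an actual proof of the adjectives, together with the correct caveats that homogeneity is only spatial and that $P>0$ is needed for nondegeneracy. Your argument also applies verbatim to the paper's metric, with base metric $a^2P^2(\delta_{\alpha\beta}dP^\alpha dP^\beta-\delta_{\alpha\beta}d\tau^\alpha d\tau^\beta)$, flat fiber $\mathbb{R}^{D_x}$ and warping function $aP$, so it closes that gap there too.

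One small correction: in your metric the spatial scale is the warping function $a\sqrt{P}$, not $a$. So ``expanding'' should mean that $a\sqrt{P}$ increases. With a Gaussian $P$, the factor $\sqrt{P}$ itself decays as a Gaussian at large $\tau$, so monotonicity of $a$ alone does not guarantee expansion.
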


\begin{proof}
{Consider a manifold $\mathcal{M}^{(D_P, D_\tau, D_x)}$ with coordinates
\begin{equation}
	c^\mu = (P^1, \dots, P^{D_P}, \tau^1, \dots, \tau^{D_\tau}, x^1, \dots, x^{D_x}) \; .
\end{equation}} 

Define the HIPEST line element as follows:
\begin{equation}\label{eq:hipest_line_element}
    ds_{\rm HIPEST}^2 = a^2(\tau) P^2(\tau) \delta_{\alpha\beta} \left[ dP^\alpha dP^\beta - d\tau^\alpha d\tau^\beta + dx^\alpha dx^\beta \right],
\end{equation}
where $a(\tau)$ is the scale factor, $P(\tau)$ is a probability function, and~$\delta_{\alpha\beta}$ is the Kronecker delta. {The metric is symmetric, diagonal, with~a pseudo-Riemannian signature for spacetime components and a positive-definite signature for probabilistic dimensions, transforming as a {$(0,2)$}-tensor as in Equation~(\ref{eq:metric_transformation}). The~functional dependence on $P(\tau)$ models probabilistic dimensions, consistent with the generalized framework ($U=0$, $L=2$, and codomain\linebreak   $\mathbb{R}$) \cite{2018arXiv180100581B, Lee:2012}.}
\end{proof}

\begin{Example}[HIPEST Manifold]
{The HIPEST manifold--metric pair from Proposition~\ref{prop:hipest} has the line element provided by Equation~(\ref{eq:hipest_line_element}), illustrating an homogeneous, isotropic spacetime with probabilistic dimensions ($U=0$, $L=2$, and codomain $\mathbb{R}$) \cite{2018arXiv180100581B}.}
\end{Example}

\begin{Proposition}[Generalized Curved Probabilistic Spacetimes]
\label{prop:curved_prob}
On a $(D_P, D_\tau, D_r)$-\linebreak  dimensional manifold $\mathcal{M}^{(D_P, D_\tau, D_r)}$ with coordinates $c^\mu = (P^1, \dots, P^{D_P}, \tau^1, \dots, \tau^{D_\tau}, r^1, \dots, r^{D_r})$, generalized curved probabilistic spacetimes (fgcPST and sgcPST) can be constructed with a symmetric {$(0,2)$}-tensor metric, incorporating curvature and probabilistic scaling, including infinite-dimensional extensions.
\end{Proposition}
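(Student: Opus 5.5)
The plan is to prove Proposition~\ref{prop:curved_prob} by direct construction, as in Propositions~\ref{prop:dim_manifold}--\ref{prop:hipest}. I will write explicit line elements for the two families, fgcPST and sgcPST, assembled from blocks already built earlier. I will then check that each resulting $(0,2)$-tensor is symmetric, smooth and tensorial. The spatial block will be $dr_{D_r}^2 = dr^2 + S_k^2(r)\, d\Omega_{D_r-1}^2$ from Equations~(\ref{eq:spatial_component})--(\ref{eq:spatial_angular_metric}). The temporal block will be $d\tau_{D_\tau}^2$ from Equations~(\ref{eq:temporal_component}) and (\ref{eq:temporal_angular_metric}). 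By analogy, I will define a probabilistic block $dP_{D_P}^2 = dP^2 + S_{k_P}^2(P)\, d\Omega_{D_P-1}^2$, using the same curvature function of Equation~(\ref{eq:curvature_function}) with its own curvature parameter $k_P$.

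For the \emph{first} generalized curved probabilistic spacetime (fgcPST), I will use a single overall conformal-type factor, as in HIPEST (Equation~(\ref{eq:hipest_line_element})):
\begin{equation}
ds^2_{\rm fgcPST} = a^2(\tau)\, P^2(\tau)\left[ dP_{D_P}^2 - d\tau_{D_\tau}^2 + dr_{D_r}^2 \right].
\end{equation}
For the \emph{second} (sgcPST), each block will get its own scaling, mirroring the perturbed structure of Equation~(\ref{eq:ads_line_element}):
\begin{equation}
ds^2_{\rm sgcPST} = a^2(c)\left[ P_P^2(c)\, dP_{D_P}^2 - e^{2\Psi(c)}\, d\tau_{D_\tau}^2 + e^{-2\Phi(c)}\, dr_{D_r}^2 \right].
\end{equation}
Here $P_P$ may be taken to be a Gaussian $G(\cdot)$ as in Equations~(\ref{eq:gaussian_psi}) and (\ref{eq:gaussian_phi}).

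In both cases the metric is block diagonal. Each block is symmetric, because it is a diagonal product of smooth positive functions with the round-sphere metrics. Symmetry is therefore immediate. Tensoriality follows from Equation~(\ref{eq:metric_transformation}), and the signature is $(D_P+D_r, D_\tau)$ wherever $a,P,P_P \neq 0$. The pair is then a special case of Equation~(\ref{eq:Manifold_matric_pair_generalisation}) with $U=0$, $L=2$ and codomain $\mathbb{R}$.

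For the infinite-dimensional extension, I will let $D_P, D_\tau, D_r \to \infty$. I will interpret the metric as a diagonal operator on the sequence space $\ell^2$, acting on tangent vectors with finitely many nonzero components, or on square-summable components. Concretely, $ds^2 = \sum_{n} \lambda_n(c)\,(dc^n)^2$ with $\sup_n |\lambda_n| < \infty$, so that the series converges for $\ell^2$ tangent vectors. The finite-dimensional pairs are then recovered as restrictions to coordinate subspaces.

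\textbf{Main obstacle.} The hard step is the regularity of the construction, not the algebra. Two issues must be handled. First, the angular blocks degenerate at $\theta_i \in \{0,\pi\}$ and at zeros of $S_k$; I will restrict to the open coordinate domain, as is standard for spherical charts. Second, the factor $P^2(\tau)$ must be kept away from zero, so I will assume $P(\tau) \in (0,1]$. In the infinite-dimensional case, the products $\prod \sin^2\theta_i$ tend to zero, which threatens uniform boundedness away from zero. I would first try to require only boundedness from above, obtaining a weak (Hilbert-manifold-type) pseudo-metric. I would flag strong nondegeneracy as an additional assumption rather than claim it in general.
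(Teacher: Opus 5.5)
Your proposal is correct at the level of rigour the statement admits, and its overall strategy matches the paper's: write explicit block line elements, read off symmetry and tensoriality from the $(0,2)$ transformation law, Equation~(\ref{eq:metric_transformation}), and place the pair in the $U=0$, $L=2$, codomain $\mathbb{R}$ case of the general framework. The objects themselves differ, however.

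\textbf{How the paper builds them.} It takes fgcPST to be the fully generic symmetric form $g^{(D_P,D_\tau,D_r)}_{\alpha\beta}(c)\,dc^\alpha dc^\beta$, so the $(0,2)$ claim holds by definition. Curvature enters through the special case fscPST, $a^2(c)[e^{2Y}dP^2_{D_P}-e^{2\Psi}d\tau^2_{D_\tau}+e^{-2\Phi}dr^2_{D_r}]$. This is essentially your sgcPST with $e^{2Y}$ in place of $P_P^2$. For sgcPST the paper again starts from the generic form. Its special cases are an overall probabilistic observable times an ordinary perturbed FLRW metric, $O_{\rm Pf}(c_{\rm GR})\,a^2(\tau)[-e^{2\Psi}d\tau^2+e^{-2\Phi}dx^idx^j\delta_{ij}]$. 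Its infinite-dimensional extension is a continuum-indexed double integral $\int d\alpha\int d\beta\,g_{\alpha\beta}(c)\,dc^\alpha dc^\beta$. The paper asserts, without proof, that this reduces to the same form with an observable $O_{\rm Ps}(c_{\rm GR})$.

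\textbf{What each route buys.} The paper's route gives generality and a direct link to testable quantities, since the whole probabilistic sector collapses to one scaling of four-dimensional perturbed FLRW. Your route gives an object that is actually well defined. You fix the signature explicitly, and you treat coordinate singularities and vanishing conformal factors. Your countable $\ell^2$-modelled extension has bounded coefficients, whereas the paper never addresses convergence. The price is that your extension is countable rather than continuum-indexed, and it does not reproduce the reduction to $O_{\rm Ps}$.

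\textbf{A refinement for your stated obstacle.} Suppose the angles are modelled on $\ell^2$ centred at $0$. Then the set $\{\theta_i\in(0,\pi)\ \forall i\}$ has empty interior, so ``restrict to the open coordinate domain'' fails in infinite dimensions. Centre the angles at $\pi/2$ instead: write $\theta_i=\pi/2+\epsilon_i$ with $\epsilon\in\ell^2$ and $|\epsilon_i|<\pi/2$. This set is open, because $\epsilon_i\to 0$ forces $\sup_i|\epsilon_i|$ to be attained. Moreover, $\sum_i\cos^2\theta_i\le\sum_i\epsilon_i^2<\infty$, so the products $\prod_{i<n}\sin^2\theta_i$ are bounded below by a positive constant. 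This removes the degeneracy you flagged, rather than forcing you to settle for a weak metric.
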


\begin{proof}
{Consider a manifold $\mathcal{M}^{(D_P, D_\tau, D_r)}$ with coordinates
\begin{equation}
c^\mu = (P^1, \dots, P^{D_P}, \tau^1, \dots, \tau^{D_\tau}, r^1, \dots, r^{D_r}) \; .
\end{equation}

} For the first generalized curved probabilistic spacetime (fgcPST), define the line element as follows:
\begin{equation}\label{eq:fgcpst_line_element}
    ds_{\rm fgcPST}^2 = g^{(D_P, D_\tau, D_r)}_{\alpha\beta}(c) dc^\alpha dc^\beta,
\end{equation}
where $g^{(D_P, D_\tau, D_r)}_{\alpha\beta}$ is symmetric and transforms as a {$(0,2)$}-tensor as in Equation~(\ref{eq:metric_transformation}) \cite{Wald:1984}. As a~special case, the~first special curved probabilistic spacetime (fscPST) has the following:
\begin{equation}\label{eq:fscpst_line_element}
    ds_{\rm fscPST}^2 = a^2(c) \left[ e^{2Y(c)} dP_{D_P}^2 - e^{2\Psi(c)} d\tau_{D_\tau}^2 + e^{-2\Phi(c)} dr_{D_r}^2 \right],
\end{equation}
where $e^{2Y(c)} \simeq 1 + 2Y(c)$, $e^{2\Psi(c)} \simeq 1 + 2\Psi(c)$, $e^{-2\Phi(c)} \simeq 1 - 2\Phi(c)$, and~$dP_{D_P}^2$, $d\tau_{D_\tau}^2$, $dr_{D_r}^2$ incorporate curvatures $k_P$, $k_\tau$, and $k \in \{-1, 0, 1\}$, as defined in Equations~(\ref{eq:spatial_component}) and (\ref{eq:temporal_component}) \cite{CANTATA:2021ktz}. For~the second generalized curved probabilistic spacetime (sgcPST), the~line element is provided by Equation~(\ref{eq:fgcpst_line_element}). Aa a~specific case, the~first simple sgcPST (fssgcPST) is as follows:
\begin{equation}\label{eq:fssgcPST_line_element}
    ds_{\rm fssgcPST}^2 = O_{\rm Pf}(c_{\rm GR}) a^2(\tau) \left[ - e^{2\Psi(c_{\rm GR})} d\tau^2 + e^{-2\Phi(c_{\rm GR})} dx^i dx^j \delta_{ij} \right],
\end{equation}
where $c_{\rm GR} = \{\tau, \vec{x}\}$, and~$O_{\rm Pf}(c_{\rm GR})$ is a scaling factor. In another case, the~second simple sgcPST (sssgcPST) uses the following integrals:
\begin{equation}\label{eq:sssgcPST_line_element}
    ds_{\rm sssgcPST}^2 = \int_{-\alpha_\infty}^{\alpha_\infty} d\alpha \int_{-\beta_\infty}^{\beta_\infty} d\beta \, g^{(D_P, D_\tau, D_r)}_{\alpha\beta}(c) dc^\alpha dc^\beta,
\end{equation}
{reducing them to}
\begin{equation}\label{eq:sssgcPST_reduced}
    ds_{\rm sssgcPST}^2 = O_{\rm Ps}(c_{\rm GR}) a^2(\tau) \left[ - e^{2\Psi(c_{\rm GR})} d\tau^2 + e^{-2\Phi(c_{\rm GR})} dx^i dx^j \delta_{ij} \right],
\end{equation}
where $O_{\rm Ps}(c_{\rm GR})$ is another observable. {Both cases are symmetric, transform as {$(0,2)$}-tensors as in Equation~(\ref{eq:metric_transformation}), and~align with the generalized framework ($U=0$, $L=2$, and codomain $\mathbb{R}$), with~probabilistic scaling via $O_{\rm Pf}$ or $O_{\rm Ps}$, testable through observations~\cite{2018arXiv180100581B, Spivak:1965}.}
\end{proof}

\begin{Example}[First Special Curved Probabilistic Spacetime]
{The fscPST from Proposition~\ref{prop:curved_prob} has the line element provided by Equation~(\ref{eq:fscpst_line_element}), with~curvature in probabilistic, temporal, and~spatial components, illustrating a curved probabilistic spacetime ($U=0$, $L=2$, and codomain $\mathbb{R}$) \cite{CANTATA:2021ktz}.}
\end{Example}

\begin{Example}[Simple Second Generalized Curved Probabilistic Spacetimes]
{The fssgcPST and sssgcPST from Proposition~\ref{prop:curved_prob} have line elements provided by Equations~(\ref{eq:fssgcPST_line_element}) and (\ref{eq:sssgcPST_reduced}), respectively, where $O_{\rm Pf}$ and $O_{\rm Ps}$ are probabilistic scaling factors, with~sssgcPST using an integral form for continuous indices ($U=0$, $L=2$, and codomain $\mathbb{R}$) \cite{2018arXiv180100581B, Spivak:1965}.}
\end{Example}

\section{Entropic and Infinite-Dimensional Manifold--Metric~Pairs}
\label{sec:entropic_infinite_manifolds}

{\textls[-15]{This section applies the generalized manifold--metric pair framework from Equation~(\ref{eq:Manifold_matric_pair_generalisation})} \linebreak  to construct manifold--metric pairs incorporating entropy, information, probabilistic, and~spacetime dimensions, including infinite-dimensional extensions. We formalize these constructions through a theorem, supported by propositions and examples, with~equations labeled uniquely and referenced to avoid repetition.}

\begin{Theorem}[Entropic and Infinite-Dimensional Manifold--Metric Pairs]
\label{thm:entropic_infinite}
\textls[-15]{Let $\mathcal{M}^D$ be a differentiable manifold. The~generalized manifold--metric pair $(\mathcal{M}_G, \mathcal{F}_{\mu_1 \dots \mu_L}^{\nu_1 \dots \nu_U}[c, f(c)] : C_{\mu_1} \otimes \dots \otimes C_{\nu_U} \to \mathbb{Q})$ }\linebreak  can be applied to~construct the following:
\begin{enumerate}
    \item An entropic manifold--metric pair replacing conformal time with entropy dimensions.
    \item A combined generalized entropic spacetime (EST) incorporating entropy, time, and spatial dimensions.
    \item A generalized informatic spacetime (GISTM) with information~dimensions.
    \item A probabilistic entropic spacetime manifold--metric pair (PESTMMP) combining probabilistic, entropic, and~spacetime dimensions.
    \item An infinite-dimensional manifold--metric pair with functional metrics and quaternion~codomain.
\end{enumerate}
The line element is provided by the following:
\begin{equation}\label{eq:generalized_line_element}
    ds^{U+L} = \mathcal{F}_{\mu_1 \dots \mu_L}^{\nu_1 \dots \nu_U}[c, f(c)] dc^{\mu_1} \dots dc^{\mu_L} dc_{\nu_1} \dots dc_{\nu_U},
\end{equation}
where $dc_{\nu_i} = g_{\nu_i \lambda} dc^\lambda$ for a symmetric {$(0,2)$}-tensor $g_{\mu\nu}$, and~$\mathcal{F}$ transforms as a $(U,L)$-tensor with~optional symmetry in indices and codomain $\mathbb{Q}$.
\end{Theorem}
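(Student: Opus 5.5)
The plan follows the template of the two preceding application theorems (Theorem~\ref{thm:dim_prob_manifolds} and the FLRW results). I will establish Theorem~\ref{thm:entropic_infinite} by five constructive propositions, one for each item. Each one specialises the generalized pair of Equation~(\ref{eq:Manifold_matric_pair_generalisation}) by three choices: a concrete coordinate space $C$, the rank $(U,L)$, and the codomain. Every construction must then pass the same three checks used throughout the paper. First, $\mathcal{F}$ transforms as a $(U,L)$-tensor, as in the transformation law of the Extension Proposition or Equation~(\ref{eq:metric_transformation}). Second, the chosen symmetry in indices holds. Third, the line element (\ref{eq:generalized_line_element}) is invariant because all indices are fully contracted.

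For items 1--4 I take $U=0$, $L=2$ and codomain $\mathbb{R}$, working with block-diagonal metrics in the style of the HIPEST and fgcPST constructions.

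\begin{enumerate}
\item For the entropic pair, I take coordinates $(S^1,\dots,S^{D_S},x^1,\dots,x^{D_x})$. I replace $-d\tau^\alpha d\tau^\beta$ in Equation~(\ref{eq:hipest_line_element}) by $-dS^\alpha dS^\beta$, with scale factor $a(S)$.
\item For EST, I adjoin the entropy block to the $(D_\tau,D_x)$ pair of Proposition~\ref{prop:dim_manifold}.
\item For GISTM, I use the same construction with information coordinates $I^\alpha$.
\item For PESTMMP, I concatenate the probabilistic block of Proposition~\ref{prop:epflrw} with the entropic and spacetime blocks. Each block is multiplied by a smooth positive conformal factor such as $a^2 P^2$ or $e^{2Y}$.
\end{enumerate}

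In each of these cases symmetry and tensoriality are immediate, since the metric is a sum of symmetric blocks each pulled back from a factor. The signature is read off block by block.

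Item 5 is the main obstacle. Here $C$ is infinite-dimensional, so the finite products of Jacobians in the transformation law no longer make sense literally. I would model the infinite-dimensional pair on the integral form of Equation~(\ref{eq:sssgcPST_line_element}). Indices are replaced by continuous labels $\alpha\in\mathbb{R}$, and contractions are replaced by integrals, giving
\[
ds^{U+L}=\int d\alpha_1\cdots d\alpha_{L+U}\,\mathcal{F}[c,f(c)](\alpha_1,\dots,\alpha_{L+U})\,dc^{\alpha_1}\cdots dc_{\alpha_{L+U}},
\]
with quaternion-valued kernel $\mathcal{F}$. I would assume $\mathcal{F}$ is smooth and integrable, so that the integral converges, for instance by taking $C$ to be a Hilbert space of square-integrable profiles. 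Under that assumption the integral defines a multilinear form. Invariance then follows from the chain rule for functional derivatives, $\delta c^{\alpha'}/\delta c^{\beta}$, which plays the role of the Jacobian. The quaternionic codomain is handled exactly as in the Codomain Generalization Proposition: quaternion multiplication is noncommutative, but $\mathbb{R}$ is central in $\mathbb{Q}$ and the differentials are real, so the contraction is well defined.

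Combining the five propositions then yields the theorem, in the same way the earlier theorems were assembled from their propositions.
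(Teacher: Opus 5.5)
Your proposal is correct and follows the same route as the paper, which also proves the theorem with one constructive proposition per item. For items 1--4 it writes symmetric $(0,2)$ metrics with real codomain: a generic $g_{\alpha\beta}$ plus diagonal special cases built from conformal factors like $a^2(c)e^{2\Xi(c)}$. For item 5 it writes a line element in which index sums become integrals over continuous labels against a $\mathbb{Q}$-valued kernel.

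The differences are minor. The paper ties the entropy coordinates to conformal time through a monotone differentiable map $s$ with $dS=\partial_\tau\mathcal{S}\,d\tau$; you could add this so that item 1 is a reparametrization of time rather than a mere relabeling. Your treatment of item 5 (convergence, the functional chain rule, and centrality of $\mathbb{R}$ in $\mathbb{Q}$) is more careful than the paper's bare assertion of the $(U,L)$ transformation law, though on $L^2$ profiles you need a square-integrable, not merely integrable, kernel.
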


\begin{proof}
{The theorem is established through five propositions, each constructing a specific class of manifold--metric pairs, followed by examples illustrating their application. The~propositions address entropic replacements, combined entropic spacetimes, informatic spacetimes, probabilistic entropic spacetimes, and~infinite-dimensional manifolds, with~equations referenced as in Equation~(\ref{eq:generalized_line_element}) to ensure consistency with the generalized~framework.}
\end{proof}

\subsection{Entropic Manifold--Metric~Pairs}

{We consider entropy (thermodynamic or information) as a positively increasing quantity, analogous to conformal time $\tau$, and~construct manifold--metric pairs incorporating entropy dimensions, ensuring compatibility with differential geometry.}

\begin{Proposition}[Entropic Manifold Replacing Conformal Time]
\label{prop:entropic_replacing}
On a $(D_S, D_x)$-dimensional manifold $\mathcal{M}^{(D_S, D_x)}$ with coordinates $c = \{S_1, \dots, S_{D_S}, r_1, \dots, r_{D_x}\}$, an~entropic manifold--metric pair $(\mathcal{M}^{(D_S, D_x)}, g^{(D_S, D_x)}_{\alpha\beta} : C \otimes C \to \mathbb{R})$ can be constructed, where entropy coordinates \linebreak  \textls[-15]{$S = \mathcal{S}[\tau, s(\hat{\tau})]$ replace conformal time, and~$g^{(D_S, D_x)}_{\alpha\beta}$ is a symmetric, pseudo-Riemannian {$(0,2)$}-tensor.}
\end{Proposition}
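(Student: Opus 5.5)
The plan follows the template of Proposition~\ref{prop:dim_manifold}, with the temporal block replaced by an entropic block. The key point to establish is that $S=\mathcal{S}[\tau,s(\hat\tau)]$ is an admissible coordinate, i.e.\ a smooth reparametrization of conformal time. Once that holds, the construction is a coordinate change applied to an already valid pseudo-Riemannian metric.

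First, fix a $(D_\tau,D_x)$-dimensional pair $(\mathcal{M}^{(D_\tau,D_x)},g^{(D_\tau,D_x)}_{\alpha\beta})$ from Proposition~\ref{prop:dim_manifold}, with $D_\tau=D_S$. Take its line element in the perturbed form of Equation~(\ref{eq:ads_line_element}), with temporal block~(\ref{eq:temporal_component}) and spatial block~(\ref{eq:spatial_component}). Next, impose the physical input of this subsection: entropy is positively increasing. Concretely, assume that each $S_i=\mathcal{S}_i[\tau,s(\hat\tau)]$ is smooth and that the Jacobian $J^i{}_a=\partial S_i/\partial\tau^a$ has positive determinant, e.g.\ $\mathcal{S}$ is strictly monotone in each $\tau^a$ when $D_S=1$. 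By the inverse function theorem, the map $(\vec\tau,\vec r)\mapsto(\vec S,\vec r)$ is then a local diffeomorphism. It is a global diffeomorphism onto its image if $\mathcal{S}$ is injective, which strict monotonicity gives in the one-dimensional case. Hence $c=\{S_1,\dots,S_{D_S},r_1,\dots,r_{D_x}\}$ is a valid chart on the same differentiable manifold.

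Second, define $g^{(D_S,D_x)}_{\alpha\beta}$ as the pullback of $g^{(D_\tau,D_x)}$ under the inverse map. Equivalently, apply the transformation law~(\ref{eq:metric_transformation}) with $\partial\tau^a/\partial S_i=(J^{-1})^a{}_i$, which yields
\begin{equation}
ds^2_{(D_S,D_x)}=a^2(c)\left[-e^{2\Psi(c)}\,g^{(D_\tau)}_{ab}(J^{-1})^a{}_i(J^{-1})^b{}_j\,dS^i dS^j+e^{-2\Phi(c)}\,dr^2_{D_x}\right].
\end{equation}
Symmetry is immediate because the pullback of a symmetric tensor is symmetric. Invariance of $ds^2$ follows from the tensorial transformation law. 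The codomain is $\mathbb{R}$ because all quantities involved are real.

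Third, check that the signature is preserved, so that the result is pseudo-Riemannian. Sylvester's law of inertia applies because $J$ is invertible, so the congruence by $J^{-1}$ leaves the $(D_S,D_x)$ signature of the block-diagonal form unchanged. This places the pair in the generalized framework of Equation~(\ref{eq:Manifold_matric_pair_generalisation}) with $U=0$, $L=2$ and codomain $\mathbb{R}$.

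The main obstacle is the first step. The statement only says that $S$ "replaces" conformal time, so the regularity of $\mathcal{S}$ (smoothness and nonvanishing Jacobian, in particular strict rather than weak increase) must be made explicit as a hypothesis. Without it, the metric degenerates where $\partial_\tau\mathcal{S}=0$. I would first state strict monotonicity and smoothness of $\mathcal{S}$ as the precise meaning of ``positively increasing''. I would then note that at critical points the construction holds only on the open set where $\det J\neq0$.
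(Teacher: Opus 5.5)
Your proposal is correct. It rests on the same idea as the paper's proof: entropy is a monotone reparametrization of conformal time, so $dS=\partial_\tau\mathcal{S}\,d\tau$ together with the $(0,2)$ transformation law carries a metric over to the chart $\{S,r\}$.

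The execution differs in substance.
The paper records the monotone map $s$, the smooth functional $\mathcal{S}$ and the differential relations $dS_{D_S}=\nabla_\tau\mathcal{S}\,d\tau_{D_\tau}$. It then simply \emph{posits} a symmetric $(0,2)$ tensor $g^{(D_S,D_x)}_{\alpha\beta}$ in the chart $\{S,r\}$, and treats the pseudo-Riemannian signature as part of the definition rather than checking it. You instead \emph{derive} $g^{(D_S,D_x)}$ as the pullback of an existing $(D_\tau,D_x)$ metric. The inverse function theorem certifies that $\{S,r\}$ is a chart, and Sylvester's law of inertia certifies the signature.

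The paper's route needs no regularity of $\mathcal{S}$, but the relation to the original temporal metric is never actually used there. Yours makes that relation genuine, at the cost of the hypotheses $D_S=D_\tau$ and $\det J\neq0$, and of holding only locally (globally for $D_S=1$). Your argument also survives the paper's extension to $r$-dependent $\mathcal{S}$, where $dS_{D_S}=\nabla_{\tau x}\mathcal{S}\,dx_{D_x}$. The pullback then acquires $dS\,dr$ cross terms, but the full Jacobian is block-triangular with invertible diagonal blocks, so Sylvester still applies.

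Two precisions:
\begin{enumerate}
\item Strict monotonicity does not imply $\det J\neq0$ (consider $\tau\mapsto\tau^3$). The hypothesis to state is $\partial_\tau\mathcal{S}\neq0$.
\item At a critical point the component $g_{\tau\tau}(\partial_\tau\mathcal{S})^{-2}$ (for $D_S=1$) blows up rather than degenerates.
\end{enumerate}

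Finally, your $J^{-1}$ factors are exactly what the substitution $d\tau=(\partial_\tau\mathcal{S})^{-1}dS$ gives. The paper's subsequent special-case example instead writes $(\nabla_\tau\mathcal{S})^2$ in front of $dS_{D_S}^2$, so your form, not the example's, is the one consistent with the stated differential relation.
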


\begin{proof}
{Consider a $D$-dimensional differentiable manifold $\mathcal{M}^D$ with a submanifold $\mathcal{M}^{(D_S, D_x)} = \mathcal{M}^{D_S} \otimes \mathcal{M}^{D_x}$, where $D_S + D_x \leq D$.} Define entropy coordinates \linebreak  $S = \{S_1, \dots, S_{D_S}\} \in \mathcal{M}^{D_S}$, related to conformal time $\tau$ by the following:
\begin{equation}\label{eq:entropic_time_relation}
    \hat{S} := s(\hat{\tau}),
\end{equation}
where $s: \mathbb{R} \to \mathbb{R}$ is a differentiable mapping preserving monotonicity~\cite{Lee:2012}, and~$\hat{}$ denotes a unit vector. The~generalized entropy is as follows:
\begin{equation}\label{eq:entropic_functional}
    S := \mathcal{S}[\tau, s(\hat{\tau})],
\end{equation}
where $\mathcal{S}$ is a smooth functional mapping to $\mathcal{M}^{D_S}$ \cite{Spivak:1965}. The~differential relation is as follows:
\begin{equation}\label{eq:entropic_differential}
    dS = \partial_\tau \mathcal{S} \, d\tau,
\end{equation}
extended to multiple dimensions as follows:
\begin{equation}\label{eq:entropic_differential_multi}
    dS_{\rm D_S} = \nabla_\tau \mathcal{S} \, d\tau_{{  D}_{\tau}},
\end{equation}
where $\nabla_\tau = \{\partial_{\tau_1}, \dots, \partial_{\tau_{D_\tau}}\}$ transforms as a covector field under coordinate changes \linebreak  $\tau^\alpha \to \tau'^\beta$ \cite{Wald:1984}. For~spacetime coordinates $x = \{\tau, r\}$, the~differential extends to the following:
\begin{equation}\label{eq:entropic_differential_spacetime}
    dS = \partial_x \mathcal{S} \, d\tau,
\end{equation}
and in multiple dimensions,
\begin{equation}\label{eq:entropic_differential_spacetime_multi}
    dS_{\rm D_S} = \nabla_{\tau x} \mathcal{S} \, dx_{\rm D_x},
\end{equation}
where $\nabla_{\tau x} = \{\partial_{\tau_1}, \dots, \partial_{\tau_{D_\tau}}, \partial_{x_1}, \dots, \partial_{x_{D_x}}\}$ \cite{Lee:2012}. The~metric is as follows:
\begin{equation}\label{eq:entropic_replacing_line_element}
    ds_{\rm entropic\ replacing}^2 = g^{(D_S, D_x)}_{\alpha \beta}(c) dc^\alpha dc^\beta,
\end{equation}
where $c = \{S_1, \dots, S_{D_S}, r_1, \dots, r_{D_x}\}$, and~$g^{(D_S, D_x)}_{\alpha \beta}$ is symmetric and transforms as a {$(0,2)$}-tensor~\cite{Wald:1984}. {This aligns with Equation~(\ref{eq:Manifold_matric_pair_generalisation}) ($U=0$, $L=2$, and codomain $\mathbb{R}$), with~entropy coordinates replacing time via Equation~(\ref{eq:entropic_functional}).}
\end{proof}

\begin{Example}[Special Entropic Replacing Manifold]
{The entropic manifold--metric pair from Proposition~\ref{prop:entropic_replacing} has a special case with the line element:}
\begin{equation}\label{eq:special_entropic_replacing}
    ds_{\rm special\ entropic\ replacing}^2 = a^2(c) \left[ - e^{2\Psi(c)} \left( \nabla_\tau \mathcal{S} \right)^2 dS_{\rm D_S}^2 + e^{-2\Phi(c)} dr^2_{\rm D_r} \right],
\end{equation}
where $\left( \nabla_\tau \mathcal{S} \right)^2$ is derived from Equation~(\ref{eq:entropic_differential_multi}), and~the metric is pseudo-Riemannian with Lorentzian signature, suitable for cosmological applications ($U=0$, $L=2$, and codomain $\mathbb{R}$) \cite{ONeill:1983}.
\end{Example}

\begin{Proposition}[Combined Generalized Entropic Spacetime]
\label{prop:est}
On a $(D_S, D_\tau, D_r)$-dimensional manifold $\mathcal{M}^{(D_S, D_\tau, D_r)}$ with coordinates $c = \{S_1, \dots, S_{D_S}, \tau_1, \dots, \tau_{D_\tau}, r_1, \dots, r_{D_r}\}$, a~generalized entropic spacetime (EST) manifold--metric pair $(\mathcal{M}^{(D_S, D_\tau, D_r)}, g^{(D_S, D_\tau, D_r)}_{\alpha\beta} : C \otimes C \to \mathbb{R})$ can be constructed, incorporating entropy as an independent dimension.
\end{Proposition}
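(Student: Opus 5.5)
The construction will follow the pattern of Proposition~\ref{prop:entropic_replacing}, with one change: entropy becomes an independent block of coordinates instead of a replacement for $\tau$. The plan is to build the EST pair in three steps, namely the manifold, then the tensor, then the line element, and afterwards to check that it is a special case of Equation~(\ref{eq:Manifold_matric_pair_generalisation}) with $U=0$, $L=2$ and codomain $\mathbb{R}$.

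\textbf{Step 1 (manifold).} Start from a differentiable manifold $\mathcal{M}^D$ with $D_S+D_\tau+D_r\le D$. Take the product submanifold $\mathcal{M}^{(D_S,D_\tau,D_r)}=\mathcal{M}^{D_S}\otimes\mathcal{M}^{D_\tau}\otimes\mathcal{M}^{D_r}$, in the same way as in Propositions~\ref{prop:dim_manifold} and~\ref{prop:hipest}, and give it product charts with coordinates $c=\{S_1,\dots,S_{D_S},\tau_1,\dots,\tau_{D_\tau},r_1,\dots,r_{D_r}\}$. A product of smooth charts is again a smooth atlas, so the product inherits a differentiable structure. Each $S_i$ is treated as a genuine coordinate function. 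One could still relate it to $(\tau,r)$ through Equation~(\ref{eq:entropic_differential_spacetime_multi}), but it is not eliminated.

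\textbf{Step 2 (metric).} Define $g^{(D_S,D_\tau,D_r)}_{\alpha\beta}$ to be block diagonal:
\begin{equation}
ds^2_{\rm EST}=a^2(c)\left[e^{2Y(c)}dS^2_{D_S}-e^{2\Psi(c)}d\tau^2_{D_\tau}+e^{-2\Phi(c)}dr^2_{D_r}\right].
\end{equation}
The blocks $dS^2_{D_S}$, $d\tau^2_{D_\tau}$ and $dr^2_{D_r}$ are the curved forms of Equations~(\ref{eq:spatial_component}) and~(\ref{eq:temporal_component}), with their own curvatures. This is the same structure as Equation~(\ref{eq:fscpst_line_element}), with the $P$-block replaced by an $S$-block. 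Three properties then need checking.
\begin{itemize}
\item Symmetry holds because the tensor is diagonal.
\item Smoothness holds provided $a$, $Y$, $\Psi$, $\Phi$ and $S_k$ are smooth, with $a>0$.
\item Nondegeneracy holds because every diagonal entry is a positive function times $\pm$ a positive-definite block. The signature is therefore $(D_\tau$ negative$,\,D_S+D_r$ positive$)$, which makes $g$ pseudo-Riemannian.
\end{itemize}

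\textbf{Step 3 (invariance and embedding).} Impose the tensor transformation law of Equation~(\ref{eq:metric_transformation}). Invariance of $ds^2_{\rm EST}$ under coordinate changes then follows exactly as in Proposition~\ref{prop:dim_manifold}. Finally, set $\mathcal{F}_{\alpha\beta}[c,f(c)]:=g^{(D_S,D_\tau,D_r)}_{\alpha\beta}$, where $f$ collects $(a,Y,\Psi,\Phi)$, to identify EST as an instance of the generalized pair.

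\textbf{Expected obstacle.} The main difficulty is regularity at the coordinate singularities of the angular parts ($\theta_i=0,\pi$, and $r=0$ or $S_k(r)=0$). There the block metrics degenerate, so the diagonal form is only valid on the open chart domain. I would handle this in the standard way: assert the pseudo-Riemannian property on the regular chart region and appeal to the usual smooth extension of round-sphere metrics across the poles. A second issue is that $S$ is declared independent, so it is not a function of $\tau$. If the monotone relation of Equation~(\ref{eq:entropic_time_relation}) is imposed, it can only define a submanifold of $\mathcal{M}^{(D_S,D_\tau,D_r)}$, and the proof must make clear that the pair is built before any such restriction is applied.
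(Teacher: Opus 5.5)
Your proposal is correct and follows essentially the same route as the paper. Both put a symmetric $(0,2)$-tensor $g^{(D_S,D_\tau,D_r)}_{\alpha\beta}$ on the coordinates $\{S,\tau,r\}$ obeying the tensor transformation law, illustrate it with the block-diagonal special case (the paper's SEST line element, Equation~(\ref{eq:sest_line_element}), where your $Y$ plays the role of the paper's $\Xi$), and identify it as the $U=0$, $L=2$, codomain-$\mathbb{R}$ instance of Equation~(\ref{eq:Manifold_matric_pair_generalisation}). You go further than the paper in three places: the explicit signature and nondegeneracy check, the chart-domain caveat, and the remark that imposing Equation~(\ref{eq:entropic_time_relation}) would only cut out a submanifold, whereas the paper invokes that relation while also declaring $S$ an independent dimension.
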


\begin{proof}
{Consider a manifold $\mathcal{M}^{(D_S, D_\tau, D_r)}$ with coordinates
\begin{equation}
c = \{S_1, \dots, S_{D_S}, \tau_1, \dots, \tau_{D_\tau}, r_1, \dots, r_{D_r}\} \; ,
\end{equation}
where entropy $S$ is related to time via Equation~(\ref{eq:entropic_time_relation}).} The line element is as follows:
\begin{equation}\label{eq:est_line_element}
    ds_{\rm EST}^2 = g^{(D_S, D_\tau, D_r)}_{\alpha \beta}(c) \, dc^\alpha dc^\beta,
\end{equation}
where $g^{(D_S, D_\tau, D_r)}_{\alpha \beta}$ is symmetric and transforms as a {$(0,2)$}-tensor~\cite{Wald:1984}. A~special case, the~special entropic spacetime (SEST) has the following:
\begin{equation}\label{eq:sest_line_element}
    ds_{\rm SEST}^2 = a^2(c) \left[ e^{2\Xi(c)} dS_{\rm D_S}^2 - e^{2\Psi(c)} d\tau_{\rm D_\tau}^2 + e^{-2\Phi(c)} dr^2_{\rm D_r} \right] \; ,
\end{equation}
where $e^{2\Xi(c)} \simeq 1 + 2\Xi(c)$ controls entropic perturbations, and~$dS_{\rm D_S}^2$ may include curvature $k_S \in \{-1, 0, 1\}$ analogous to spatial curvature~\cite{ONeill:1983}. {The metric is pseudo-Riemannian aligns with Equation~(\ref{eq:Manifold_matric_pair_generalisation}) (in which $U=0$, $L=2$, and codomain $\mathbb{R}$), and~incorporates entropy as an independent dimension~\cite{CANTATA:2021ktz}.}
\end{proof}

\begin{Example}[Special Entropic Spacetime]
{The SEST from Proposition~\ref{prop:est} has the line element provided by Equation~(\ref{eq:sest_line_element}), with~curvature in the entropic component, illustrating a generalized entropic spacetime ($U=0$, $L=2$, and codomain $\mathbb{R}$) \cite{CANTATA:2021ktz}.}
\end{Example}

\begin{Proposition}[Generalized Informatic Spacetime]
\label{prop:gistm}
On a $(D_I, D_\tau, D_x)$-dimensional manifold $\mathcal{M}^{(D_I, D_\tau, D_x)}$ with coordinates $c^\mu = \{I^0, \dots, I^{D_I}, \tau^0, \dots, \tau^{D_\tau}, x^0, \dots, x^{D_x}\}$, a~generalized informatic spacetime (GISTM) manifold--metric pair $(\mathcal{M}^{(D_I, D_\tau, D_x)}, g_{\mu\nu}^{(D_I, D_\tau, D_x)} : C \otimes C \to \mathbb{R})$ can be constructed, where information is encoded as $I[P(E)] = -\log[P(E)]$.
\end{Proposition}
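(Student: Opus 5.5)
The plan follows the template of Propositions~\ref{prop:entropic_replacing} and~\ref{prop:est}: build the manifold, build the coordinate chart including the information coordinates, write down a symmetric $(0,2)$-tensor, check its transformation law, and then identify the result as a special case of Equation~(\ref{eq:Manifold_matric_pair_generalisation}) with $U=0$, $L=2$ and codomain $\mathbb{R}$.

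\textbf{Step 1 (the manifold).} I will take $\mathcal{M}^{(D_I, D_\tau, D_x)} = \mathcal{M}^{D_I} \otimes \mathcal{M}^{D_\tau} \otimes \mathcal{M}^{D_x}$ as a product of differentiable manifolds, or as a submanifold of $\mathcal{M}^D$ with enough dimensions. Charts come from products of charts, so the manifold is Hausdorff, second-countable and paracompact. By the Generalized Metrization Theorem it then admits a Riemannian auxiliary metric $g$, which serves whenever covariant differentials are needed.

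\textbf{Step 2 (the information coordinates).} For each $\alpha$, I set $I^\alpha = -\log P^\alpha(E)$, where $P^\alpha(E)\in(0,1]$ is a smooth probability function. The map $p\mapsto -\log p$ is a smooth, strictly decreasing diffeomorphism $(0,1]\to[0,\infty)$. So whenever the probabilities $P^\alpha$ are themselves used as coordinates, as in Proposition~\ref{prop:hipest}, the change of variables $P\mapsto I$ is a valid coordinate change on the open region $0<P<1$. The Jacobian is
\begin{equation*}
\partial I^\alpha/\partial P^\beta = -\delta^\alpha_\beta/P^\beta ,
\end{equation*}
which is nonsingular. This is the same monotone-reparametrization argument used for entropy in Equation~(\ref{eq:entropic_time_relation}).

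\textbf{Step 3 (the metric and its properties).} I define
\begin{equation*}
ds^2_{\rm GISTM} = g^{(D_I, D_\tau, D_x)}_{\mu\nu}(c)\,dc^\mu dc^\nu ,
\end{equation*}
with the explicit special case
\begin{equation*}
a^2(c)\big[e^{2\Xi(c)}dI^2_{D_I} - e^{2\Psi(c)}d\tau^2_{D_\tau} + e^{-2\Phi(c)}dx^2_{D_x}\big],
\end{equation*}
which is block-diagonal with a Lorentzian-type signature. Symmetry is immediate from the block-diagonal form. The transformation law follows as in Equation~(\ref{eq:metric_transformation}), and invariance of $ds^2$ follows from it.

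\textbf{Main obstacle.} The hardest part is the degeneracy at the boundary. As $P\to0$ we have $I\to\infty$, and at $P=1$ we have $I=0$, so the chart is only valid on an open domain. In addition, pulling back a metric written in $I$-coordinates to $P$-coordinates produces the factor $1/P^2$, which blows up as $P\to 0$. I will handle this by restricting to the open set $0<P<1$, noting that the smoothness and nondegeneracy of $g_{\mu\nu}$ require $P$ to be bounded away from zero. Optionally, I will also remark that the pulled-back block $dP^2/P^2$ is a Fisher-type information metric, which justifies calling the construction ``informatic''.
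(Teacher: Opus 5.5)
Your proposal is correct and follows essentially the same route as the paper. The paper also writes the general symmetric $(0,2)$ line element $g^{(D_I, D_\tau, D_x)}_{\mu\nu}(c)\,dc^\mu dc^\nu$ and exhibits the block-diagonal special case $a^2(c)\big[e^{2\Lambda(c)}dI^2_{D_I} - e^{2\Psi(c)}d\tau^2_{D_\tau} + e^{-2\Phi(c)}dx^2_{D_x}\big]$ (it writes $\Lambda$ where you write $\Xi$). It then notes symmetry and the $(0,2)$ transformation law and identifies the pair with the case $U=0$, $L=2$, codomain $\mathbb{R}$.

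Your product-manifold setup and your treatment of $I=-\log P$ as a diffeomorphic change of variables on $0<P<1$ add rigor the paper does not supply. One small correction: on the open set $0<P<1$ the factor $1/P^2$ is already smooth and positive, so smoothness and nondegeneracy do not require $P$ to be bounded away from zero. Nondegeneracy does require $a(c)\neq 0$.
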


\begin{proof}
{Consider a manifold $\mathcal{M}^{(D_I, D_\tau, D_x)}$ with coordinates
\begin{equation}
	c^\mu = \{I^0, \dots, I^{D_I}, \tau^0, \dots, \tau^{D_\tau}, x^0, \dots, x^{D_x}\} \; .
\end{equation}	} 

Define information as follows:
\begin{equation}\label{eq:shannon_information}
    I[P(E)] = -\log[P(E)],
\end{equation}
where $P(E)$ is the probability of event $E$ \cite{Shannon:1948}. The~line element is as follows:
\begin{equation}\label{eq:gistm_line_element}
    ds^2_{\rm GISTM} = g_{\mu\nu}^{(D_I, D_\tau, D_x)}(c) \, dc^{\mu} dc^{\nu},
\end{equation}
where $g_{\mu\nu}^{(D_I, D_\tau, D_x)}$ is symmetric and transforms as a {$(0,2)$}-tensor~\cite{Lee:2012}. A~special case is as follows:
\begin{equation}\label{eq:special_gistm}
    ds^2_{\rm GISTM} = a^2(c) \left[ e^{2\Lambda(c)} dI_{\rm D_I}^2 - e^{2\Psi(c)} d\tau_{\rm D_\tau}^2 + e^{-2\Phi(c)} dx^2_{\rm D_x} \right],
\end{equation}
where $e^{2\Lambda(c)} \simeq 1 + 2\Lambda(c)$ controls information perturbations. {The metric is pseudo-Riemannian, aligning with Equation~(\ref{eq:Manifold_matric_pair_generalisation}) ($U=0$, $L=2$, and codomain $\mathbb{R}$), and~incorporates information as an additional dimension~\cite{CANTATA:2021ktz}.}
\end{proof}

\begin{Example}[GISTM Manifold]
{The GISTM from Proposition~\ref{prop:gistm} has the line element given by Equation~(\ref{eq:special_gistm}), illustrating an informatic spacetime with information perturbations ($U=0$, $L=2$, and codomain $\mathbb{R}$) \cite{CANTATA:2021ktz}.}
\end{Example}

\begin{Proposition}[Probabilistic Entropic Spacetime Manifold--Metric Pairs]
\label{prop:pestmmp}
Considering a $(D_P, D_S, D_\tau, D_x)$-dimensional manifold $\mathcal{M}^{(D_P, D_S, D_\tau, D_x)}$ with coordinates
\begin{equation}
	D^\mu = \{P^{\mu_P}, S^{\mu_S}, \tau^{\mu_\tau}, x^{\mu_x}\} \; , 
\end{equation}	
a probabilistic entropic spacetime manifold--metric pair (PESTMMP) denoted with
\begin{equation}
 \mathcal{M}^{(D_P, D_S, D_\tau, D_x)}, g^{(D_P, D_S, D_\tau, D_x)}_{\mu\nu} : C \otimes C \to \mathbb{R}
\end{equation}	
can be constructed, combining probabilistic, entropic, and~spacetime dimensions.
\end{Proposition}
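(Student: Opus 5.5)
The plan is to follow the same constructive pattern used for Propositions~\ref{prop:hipest}, \ref{prop:est} and \ref{prop:gistm}. The coordinate blocks are assembled into a product manifold, and a block-diagonal symmetric $(0,2)$-tensor is written down on it.

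\textbf{Step 1 (the manifold).} Take $\mathcal{M}^{(D_P, D_S, D_\tau, D_x)} = \mathcal{M}^{D_P}\otimes\mathcal{M}^{D_S}\otimes\mathcal{M}^{D_\tau}\otimes\mathcal{M}^{D_x}$. Each factor is an open subset of the corresponding $\mathbb{R}^{D_\bullet}$, carrying coordinates $P^{\mu_P}$, $S^{\mu_S}$, $\tau^{\mu_\tau}$, $x^{\mu_x}$. A finite product of smooth manifolds is a smooth manifold of dimension $D_P+D_S+D_\tau+D_x$. It is also Hausdorff and second-countable, so the earlier metrizability discussion (the Generalized Metrization Theorem) applies to it. The entropy coordinates are tied to time through Equations~(\ref{eq:entropic_time_relation}) and (\ref{eq:entropic_functional}), with $s$ monotone and smooth. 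The probabilistic coordinates are treated exactly as in Proposition~\ref{prop:epflrw}.

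\textbf{Step 2 (the metric).} Define
\begin{equation*}
ds^2_{\rm PESTMMP} = a^2(c)\left[ e^{2Y(c)}dP_{D_P}^2 + e^{2\Xi(c)} dS_{D_S}^2 - e^{2\Psi(c)} d\tau_{D_\tau}^2 + e^{-2\Phi(c)} dx_{D_x}^2\right],
\end{equation*}
where each block metric $dP_{D_P}^2$, $dS_{D_S}^2$, $d\tau_{D_\tau}^2$, $dx_{D_x}^2$ is of the curved form given in Equations~(\ref{eq:spatial_component})--(\ref{eq:temporal_angular_metric}). The general case is simply $g^{(D_P,D_S,D_\tau,D_x)}_{\mu\nu}(c)\,dc^\mu dc^\nu$.

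\textbf{Step 3 (verification).} Symmetry holds by construction. The tensor transformation law is inherited from Equation~(\ref{eq:metric_transformation}), so $ds^2$ is invariant. Smoothness follows from the smoothness of $a$, $Y$, $\Xi$, $\Psi$ and $\Phi$. The signature is read off the diagonal blocks: it is positive on the $P$, $S$ and $x$ blocks and negative on the $\tau$ block. The pair is therefore a special case of Equation~(\ref{eq:Manifold_matric_pair_generalisation}) with $U=0$, $L=2$ and codomain $\mathbb{R}$.

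\textbf{Main obstacle.} The hard step is non-degeneracy at the coordinate singularities of the curved blocks, that is, the zeros of $S_k$ and of the $\sin\theta_i$ factors. There the block metrics degenerate in polar coordinates. I would first restrict to the open dense regular set, where every factor is a nonvanishing exponential or trigonometric function and so nondegeneracy is immediate. Smoothness across the degenerate loci would then follow from the standard fact that polar and spherical forms extend smoothly to the flat or constant-curvature metric.

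A second point is that, if one insists that $S$ be a genuine function of $\tau$ via Equation~(\ref{eq:entropic_functional}), then $S$ cannot be an independent coordinate. I would handle this exactly as in Proposition~\ref{prop:est}, treating $S$ as an independent dimension and using Equation~(\ref{eq:entropic_time_relation}) only to motivate it.
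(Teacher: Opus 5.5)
Your proposal is correct and follows essentially the same route as the paper. The paper likewise writes the general symmetric $(0,2)$ line element $g^{(D_P,D_S,D_\tau,D_x)}_{\mu\nu}(D)\,dD^\mu dD^\nu$ and the diagonal special case $a^2(D)\bigl[e^{2\Theta(D)}dP_{\rm D_P}^2 + e^{2\Xi(D)}dS_{\rm D_S}^2 - e^{2\Psi(D)}d\tau_{\rm D_\tau}^2 + e^{-2\Phi(D)}dx_{\rm D_x}^2\bigr]$, and then simply cites symmetry, the $(0,2)$ transformation law, the pseudo-Riemannian signature and the specialization $U=0$, $L=2$, codomain $\mathbb{R}$. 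Your extras — the curved block forms, restricting to the open set where $a$, $S_k$ and the $\sin\theta_i$ are nonzero to secure nondegeneracy, and treating $S$ as an independent coordinate rather than a function of $\tau$ via Equation~(\ref{eq:entropic_functional}) — go beyond what the paper checks but do not change the argument.
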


\begin{proof}
{Consider a manifold $\mathcal{M}^{(D_P, D_S, D_\tau, D_x)}$ with coordinates $D^\mu = \{P^{\mu_P}, S^{\mu_S}, \tau^{\mu_\tau}, x^{\mu_x}\}$, where entropy is defined as in Equation~(\ref{eq:entropic_functional}).} The line element is as follows:
\begin{equation}\label{eq:pestmmp_line_element}
    ds^2 = g^{(D_P, D_S, D_\tau, D_x)}_{\mu\nu}(D) \, dD^\mu dD^\nu,
\end{equation}
where $g^{(D_P, D_S, D_\tau, D_x)}_{\mu\nu}$ is symmetric and transforms as a {$(0,2)$}-tensor~\cite{Wald:1984}. A~special case is as follows:
\begin{equation}\label{eq:special_pestmmp}
    ds^2_{\rm PESTMMP} = a^2(D) \left[ e^{2\Theta(D)} dP_{\rm D_P}^2 + e^{2\Xi(D)} dS_{\rm D_S}^2 - e^{2\Psi(D)} d\tau_{\rm D_\tau}^2 + e^{-2\Phi(D)} dx^2_{\rm D_x} \right],
\end{equation}
where $e^{2\Theta(D)} \simeq 1 + 2\Theta(D)$ and other perturbation terms are defined similarly~\cite{CANTATA:2021ktz}. {The metric is pseudo-Riemannian, aligns with Equation~(\ref{eq:Manifold_matric_pair_generalisation}) ($U=0$, $L=2$, and codomain $\mathbb{R}$), and~combines probabilistic, entropic, and~spacetime dimensions~\cite{2018arXiv180100581B}.}
\end{proof}

\begin{Example}[PESTMMP Manifold]
{The PESTMMP from Proposition~\ref{prop:pestmmp} has the line element provided by Equation~(\ref{eq:special_pestmmp}), illustrating a spacetime with probabilistic and entropic dimensions ($U=0$, $L=2$, and codomain $\mathbb{R}$) \cite{2018arXiv180100581B, CANTATA:2021ktz}.}
\end{Example}

\begin{Proposition}[Generic Informatic Probabilistic Entropic Spacetime]
\label{prop:gipestmmp}
Considering a $(D_I, D_P, D_S, D_\tau, D_x)$-dimensional manifold $\mathcal{M}^{(D_I, D_P, D_S, D_\tau, D_x)}$ with coordinates
\begin{equation}
	c^\mu = \{I^{\mu_I}, P^{\mu_P}, S^{\mu_S}, \tau^{\mu_\tau}, x^{\mu_x}\} \; , 
\end{equation}	
a generic informatic probabilistic entropic spacetime manifold--metric pair (GIPESTMMP)
\begin{equation}
\mathcal{M}^{(D_I, D_P, D_S, D_\tau, D_x)}, \mathcal{G}_{\mu_1 \dots \mu_L}^{\nu_1 \dots \nu_U} : C_{\mu_1} \otimes \dots \otimes C_{\nu_U} \to \mathbb{C} 
\end{equation}	
can be constructed, with the~Shannon entropy relating information, probability, and~time.
\end{Proposition}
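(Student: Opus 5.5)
The proof will be a construction in the same style as Propositions~\ref{prop:est}--\ref{prop:pestmmp}. We extend the coordinate set by an information block and then promote the rank-2 metric to a $(U,L)$-rank functional tensor with complex codomain. This uses Proposition~1 (extension to $(U,L)$ rank), Proposition~2 (functionalization) and Proposition~\ref{prop:complex_codomain} (complex codomain, $\mathbb{C}\subset\mathbb{Q}$).

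\textbf{Step 1: coordinates.} Take the product manifold $\mathcal{M}^{(D_I,D_P,D_S,D_\tau,D_x)}=\mathcal{M}^{D_I}\otimes\mathcal{M}^{D_P}\otimes\mathcal{M}^{D_S}\otimes\mathcal{M}^{D_\tau}\otimes\mathcal{M}^{D_x}$ with coordinates $c^\mu=\{I^{\mu_I},P^{\mu_P},S^{\mu_S},\tau^{\mu_\tau},x^{\mu_x}\}$. Because it is a finite product of differentiable manifolds, it is Hausdorff, second-countable and paracompact. The Generalized Metrization Theorem then supplies an auxiliary symmetric positive-definite $g_{\mu\nu}$, built by a partition of unity, which we use to lower the indices $dc_{\nu_i}=g_{\nu_i\lambda}dc^\lambda$.

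\textbf{Step 2: relating information, probability, entropy and time.} On each block we impose
\begin{align}
I[P]=-\log P ,\qquad \mathcal{S}=-\sum_E P(E)\log P(E)=\langle I[P]\rangle_P ,
\end{align}
as in Equation~(\ref{eq:shannon_information}). We then let $P=P(\tau)$ as in Proposition~\ref{prop:hipest}, so that $\mathcal{S}=\mathcal{S}[\tau,s(\hat\tau)]$ as in Equation~(\ref{eq:entropic_functional}). The chain rule gives $dI=-P^{-1}\,dP$ and $dS=\nabla_\tau\mathcal{S}\,d\tau_{D_\tau}$. This is exactly the "Shannon entropy relating information, probability and time" required by the statement. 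Smoothness requires $P>0$, which we assume on the domain.

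\textbf{Step 3: the tensor.} Define $\mathcal{G}_{\mu_1\dots\mu_L}^{\nu_1\dots\nu_U}[c,f(c)]$ as block-diagonal and symmetric within its covariant and contravariant indices separately. Its only nonzero components are the fully diagonal ones on each block, with complex functional weights: for example $e^{L\Lambda(c)}$, $e^{L\Theta(c)}$, $e^{L\Xi(c)}$, $-i\,e^{L\Psi(c)}$ and $e^{-L\Phi(c)}$, each multiplied by $a^{U+L}(c)$, generalizing Equation~(\ref{eq:special_pestmmp}) and the $(0,7)$ example. The line element then follows Equation~(\ref{eq:generalized_line_element}). Tensoriality under $c\to c'$ follows because every component is defined as a $(U,L)$-tensor in one chart and transformed by the Jacobian law of Proposition~1. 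Functional dependence on $f$ is carried through by Proposition~2. Since the values lie in $\mathbb{C}$, Proposition~\ref{prop:complex_codomain} gives a well-defined pair.

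\textbf{Main obstacle.} The hard step is consistency between the functional relations of Step~2 and the claim that $I,P,S,\tau$ are independent coordinates. If $I$ is literally $-\log P$, the map $c\mapsto(I,P,\dots)$ is not a chart. I would resolve this as in Proposition~\ref{prop:est}: treat the Shannon relations as a constraint defining a submanifold (a graph, hence an embedded submanifold by the implicit function theorem, since $\partial I/\partial P=-1/P\neq0$). We then restrict $\mathcal{G}$ to that submanifold by pullback, which preserves the $(U,L)$-tensor property. A second, minor issue is that positivity does not hold for a complex codomain. Following Remark~\ref{sec:remark_1}, we only claim a metric in the generalized sense, and we define a distance, if desired, through the modulus $|\cdot|^{1/(U+L)}$ as in the $(U,L)$ metrization section.
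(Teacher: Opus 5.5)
Your Steps 1 and 3 match what the paper does. The paper takes the manifold with coordinates $c^\mu=\{I^{\mu_I},P^{\mu_P},S^{\mu_S},\tau^{\mu_\tau},x^{\mu_x}\}$, posits a $\mathbb{C}$-valued $(U,L)$-tensor $\mathcal{G}(c;k_c)$ obeying the Jacobian transformation law, and writes the line element. Your auxiliary $g_{\mu\nu}$ from a partition of unity and your explicit block-diagonal weights are harmless extra detail.

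The gap is in how you incorporate the Shannon relation. In Step 2 you impose $I=-\log P$, $P=P(\tau)$ and $S=\mathcal{S}[\tau,s(\hat\tau)]$ as relations among the coordinates. Then, to repair the resulting failure of $c\mapsto(I,P,S,\tau,x)$ to be a chart, you restrict to the graph of these relations. That step does not deliver the statement. The graph is a proper submanifold:
\begin{itemize}
\item with all three relations imposed it is parametrized by $(\tau,x)$ alone and has dimension $D_\tau+D_x$;
\item even the single relation $I=-\log P$ removes $D_I$ dimensions, and it presupposes $D_I=D_P$.
\end{itemize}
So the pair you end up with does not live on $\mathcal{M}^{(D_I,D_P,D_S,D_\tau,D_x)}$ with the $c^\mu$ as independent coordinates. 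The informatic, probabilistic and entropic directions are precisely the ones that disappear.

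There is a second problem with the restriction. Pullback along an embedding is defined only for covariant tensors, so restricting $\mathcal{G}$ by pullback is meaningless for $U>0$. You would first have to lower every upper index with $g_{\mu\nu}$ and pull back the resulting $(0,U+L)$-tensor that appears in the line element.

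The paper never meets your obstacle because it never constrains the coordinates. It defines
$S_{\rm Shannon}\{\tau,P(\tau,E),I[P(\tau,E)]\}=-\int_{\Omega^{\rm E}}dE\,P(\tau,E)\log[P(\tau,E)]$,
a functional of a probability density on an auxiliary event space $\Omega^{\rm E}$. Information, probability and time are related through this functional, while $I^{\mu_I}$, $P^{\mu_P}$, $S^{\mu_S}$ remain independent coordinates.

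To fix your proof, drop the submanifold step and do not identify the coordinates with $-\log P$ or $P(\tau)$. Let the Shannon functional enter only through the functional dependence of $\mathcal{G}$, i.e. the $f(c)$ slot of Proposition 2, for instance inside the weights of your Step 3. Steps 1 and 3 then give the stated pair on the full $(D_I+D_P+D_S+D_\tau+D_x)$-dimensional manifold. Your modulus-based treatment of positivity is consistent with the paper's metrization section.
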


\begin{proof}
{Consider a manifold $\mathcal{M}^{(D_I, D_P, D_S, D_\tau, D_x)}$ with coordinates $c^\mu = \{I^{\mu_I}, P^{\mu_P}, S^{\mu_S}, \tau^{\mu_\tau}, x^{\mu_x}\}$, where information is defined as in Equation~(\ref{eq:shannon_information}) and entropy as in Equation~(\ref{eq:entropic_functional}).} Define the Shannon entropy as follows:
\begin{equation}\label{eq:shannon_entropy}
    S_{\rm Shannon}\left\{ \tau, P(\tau, E), I[P(\tau, E)]\right\} = -\int_{\Omega^{\rm E}} dE \, P(\tau, E) \, \log[P(\tau, E)],
\end{equation}
which is smooth and well-defined~\cite{Spivak:1965, Shannon:1948}. The~line element is as follows:
\begin{equation}\label{eq:gipestmmp_line_element}
    ds^{U+L} = \mathcal{G}_{\mu_1 \dots \mu_L}^{\nu_1 \dots \nu_U}(c; k_c) \, dc^{\mu_1} \dots dc^{\mu_L} dc_{\nu_1} \dots dc_{\nu_U},
\end{equation}
where $\mathcal{G}_{\mu_1 \dots \mu_L}^{\nu_1 \dots \nu_U}$ is a $(U,L)$-tensor, transforming as follows:
\begin{equation}\label{eq:gipestmmp_transformation}
    \mathcal{G}_{\mu_1' \dots \mu_L'}^{\nu_1' \dots \nu_U'} = \left( \prod_{i=1}^U \frac{\partial c^{\nu_i'}}{\partial c^{\rho_i}} \right) \left( \prod_{j=1}^L \frac{\partial c^{\sigma_j}}{\partial c^{\mu_j'}} \right) \mathcal{G}_{\sigma_1 \dots \sigma_L}^{\rho_1 \dots \rho_U},
\end{equation}
with codomain $\mathbb{C}$ \cite{Adler:1995}. {The manifold--metric pair aligns with Equation~(\ref{eq:Manifold_matric_pair_generalisation}), with~$k_c$ describing intrinsic curvature, and~supports complex-valued metrics for quantum or informatic applications ($U, L$ arbitrary, and codomain $\mathbb{C}$) \cite{CANTATA:2021ktz, Shannon:1948}.}
\end{proof}

\begin{Example}[GIPESTMMP Manifold]
{The GIPESTMMP from Proposition~\ref{prop:gipestmmp} has the line element provided by Equation~(\ref{eq:gipestmmp_line_element}), with~Shannon entropy from Equation~(\ref{eq:shannon_entropy}), illustrating a spacetime with informatic, probabilistic, and~entropic dimensions ($U, L$ arbitrary, and codomain \linebreak  $\mathbb{C}$) \cite{Shannon:1948, CANTATA:2021ktz}.}
\end{Example}

\subsection{Infinite Dimensional Manifold--Metric~Pair}

\begin{Proposition}[Infinite-Dimensional Manifold--Metric Pair]
\label{prop:infinite_dim}
On an infinite-dimensional manifold $\mathcal{M}^{(\infty)}$ with coordinates $c^\alpha$, $\alpha \in [-\infty, +\infty]$, an~infinite-dimensional manifold--metric pair $(\mathcal{M}^{(\infty)}, \mathcal{F}_{\alpha_1 \dots \alpha_L}^{\beta_1 \dots \beta_U} : C_{\alpha_1} \otimes \dots \otimes C_{\beta_U} \to \mathbb{Q})$ can be constructed, with~a functional metric and integral-based line element.
\end{Proposition}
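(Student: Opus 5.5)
The proof will be constructive, mirroring the three-step scheme used for the Generalized Manifold--Metric Pair theorem (rank extension, functionalization, codomain generalization). The only new ingredient is that the index set is now continuous. First, I will fix a model for $\mathcal{M}^{(\infty)}$: take the coordinate space to be a space of real-valued functions $\alpha \mapsto c^\alpha$ on $\alpha \in [-\infty,+\infty]$, for instance a Hilbert space $L^2(\mathbb{R})$ or a Fréchet space of smooth rapidly decaying functions. $\mathcal{M}^{(\infty)}$ is then a manifold modelled on this space, with charts given by smooth maps into it. Each $C_{\alpha_i}$ is a copy of this coordinate space, exactly as the spaces $C_{\mu_i}$ were copies of $C$ in the finite case. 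Discrete index sums $\sum_\mu$ are replaced by integrals $\int d\alpha$, and Kronecker deltas by Dirac distributions. This is the same replacement already used informally in the sssgcPST line element.

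Second, I will define the functional tensor $\mathcal{F}_{\alpha_1 \dots \alpha_L}^{\beta_1 \dots \beta_U}[c, f(c)]$ as a distributional kernel in the variables $(\alpha_1,\dots,\beta_U)$, depending smoothly on the point $c$ and on a smooth function $f$. Its transformation law is the continuum analogue of the $(U,L)$ law in the first proposition of the Generalized Manifold--Metric Pair proof. The Jacobian $\partial c^{\alpha'}/\partial c^{\sigma}$ becomes the functional derivative $\delta c^{\alpha'}/\delta c^{\sigma}$, and the products over indices become iterated integrals over $\sigma_j,\rho_i$. Lowered differentials are formed with a symmetric positive kernel, $dc_\beta = \int d\lambda\, g_{\beta\lambda}\, dc^\lambda$; the natural choice is $g_{\beta\lambda} = \delta(\beta-\lambda)$, which is the continuum version of the Euclidean $g_i$ in the partition-of-unity argument. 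The line element is then
\begin{equation*}
ds^{U+L} = \int d\alpha_1\cdots d\alpha_L\, d\beta_1\cdots d\beta_U\, \mathcal{F}_{\alpha_1 \dots \alpha_L}^{\beta_1 \dots \beta_U}[c,f(c)]\, dc^{\alpha_1}\cdots dc^{\alpha_L}\, dc_{\beta_1}\cdots dc_{\beta_U}.
\end{equation*}
Invariance of $ds^{U+L}$ follows from the chain rule for functional derivatives, because the Jacobian kernels contract against their inverses. Finally, I will let the kernel take values in $\mathbb{Q}$, exactly as in the Codomain Generalization proposition; a quaternionic integral is understood componentwise over the basis $1,i,j,k$.

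The main obstacle is well-definedness: the integrals must converge, and the contractions of distributional kernels must make sense. I plan to handle this in two steps. First, restrict $\mathcal{F}$ to kernels that are continuous multilinear forms on the model space. For $L^2$ coordinates this holds if $\mathcal{F}$ is bounded, for example if it is a finite sum of products of delta kernels with smooth bounded coefficients, or if it lies in the projective tensor product of the dual spaces. Second, require the coordinate changes to be smooth diffeomorphisms of the model space with bounded Fréchet derivative, so that the transformed kernel is again a continuous form. A concrete example, such as the diagonal kernel $\mathcal{F}_{\alpha_1\dots\alpha_L} = a^L(\alpha_1)\prod_{j>1}\delta(\alpha_1-\alpha_j)$, will show the construction is nonempty. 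For this kernel the line element reduces to $\int d\alpha\, a^L(\alpha)(dc^\alpha)^L$, the continuum version of the diagonal $(0,7)$ example.
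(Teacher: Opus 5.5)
Your skeleton matches the paper's proof. The paper passes from the 2D Euclidean and 4D Minkowski line elements to a continuous index $\alpha$, replaces index sums by iterated integrals, and promotes $m_{\alpha_1\dots\alpha_L}$ to a functional $(U,L)$ kernel with values in $\mathbb{Q}$. It then simply asserts that $\mathcal{F}$ transforms as a $(U,L)$-tensor, with no model space, no convergence discussion and no example. Your model space, your reading of kernels as continuous multilinear forms, and your chain-rule invariance argument supply what the paper omits. However, there is a genuine error in exactly the step you flag as the main obstacle, well-definedness.

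You claim that, for $L^2$ coordinates, finite sums of products of delta kernels with smooth bounded coefficients give bounded forms. This is false once a chain of deltas ties three or more slots to a single variable, which is precisely your nonemptiness example when $U+L\ge 3$. Take $v_\epsilon=\epsilon^{-1/2}\chi_{[0,\epsilon]}$. Then $\|v_\epsilon\|_{L^2}=1$, but $\int a^3 v_\epsilon^3\,d\alpha\approx a(0)^3\epsilon^{-1/2}\to\infty$. More generally, $v\mapsto\int a\,v^L\,d\alpha$ is unbounded on $L^2(\mathbb{R})$ for every $L\ge 3$ and every continuous $a\not\equiv 0$. Even finiteness fails: $v=|\alpha|^{-1/3}\chi_{[-1,1]}\in L^2$ gives $\int v^3=\infty$. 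For $L=1$, boundedness requires $a\in L^2$, not merely $a$ bounded. So in the Hilbert model you name first, your example is not a form on the tangent spaces, and the existence claim is unsupported there.

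The repair is cheap. One option is the Schwartz-space model you also mention. There, by the kernel theorem, continuous $L$-linear forms are exactly tempered distributions on $\mathbb{R}^L$, so your ``distributional kernel'' language becomes literally correct and the diagonal kernel is admissible for polynomially bounded $a$. Another option is a Hilbert model embedded in $L^\infty$, such as $H^1(\mathbb{R})$, where $\bigl|\int a^L v_1\cdots v_L\,d\alpha\bigr|\le\|a\|_\infty^L\|v_1\|_{L^2}\|v_2\|_{L^2}\prod_{j\ge 3}\|v_j\|_{L^\infty}$ for $L\ge 2$.

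Also make sure $g_{\beta\lambda}$ is a genuine $(0,2)$-tensor field: equal to $\delta(\beta-\lambda)$ in one reference chart and transformed by the Jacobian kernels elsewhere. If you keep $\delta(\beta-\lambda)$ in every chart, the lowered $dc_\beta$ are not covariant and invariance fails for $U>0$. Writing the line element intrinsically as $v\mapsto\mathcal{F}(v,\dots,v,g^\flat v,\dots,g^\flat v)$ on $T\mathcal{M}^{(\infty)}$ makes invariance automatic. It also avoids having to make sense of compositions of distributional kernels as integrals.
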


\begin{proof}
{Consider a 2D Euclidean manifold $\mathcal{M}^{2D}$ as a starting point to generalize to infinite dimensions.} The 2D line element is as follows:
\begin{equation}\label{eq:2d_euclidean}
    \Delta s^2 = \Delta x^2 + \Delta y^2 = m_{\alpha_1 \alpha_2}(c) \Delta c^{\alpha_1} \Delta c^{\alpha_2},
\end{equation}
forming the manifold--metric pair:
\begin{equation}\label{eq:2d_manifold_pair}
    (\mathcal{M}^{2D}, m) \equiv \left( \mathcal{M}^{2D}, m_{\alpha_1 \alpha_2} : C_1 \otimes C_2 \to \mathbb{R}^+ \right),
\end{equation}
where $m_{\alpha_1 \alpha_2}$ is symmetric and positive-definite~\cite{Lee:2012}. For~a 4D Minkowski space with coordinates $c^\alpha = \{t, x, y, z\}$, the~line element is as follows:
\begin{equation}\label{eq:4d_minkowski}
    \Delta s^2 = - v_c dt^2 + \Delta x^2 + \Delta y^2 + \Delta z^2 = m_{\alpha_1 \alpha_2}(c) \Delta c^{\alpha_1} \Delta c^{\alpha_2},
\end{equation}
forming
\begin{equation}\label{eq:4d_manifold_pair}
    (\mathcal{M}^{4D}, m) \equiv \left( \mathcal{M}^{4D}, m_{\alpha_1 \alpha_2} : C_1 \otimes C_2 \otimes C_3 \otimes C_4 \to \mathbb{R} \right),
\end{equation}
with a pseudo-Riemannian metric~\cite{Wald:1984}. Generalize to an infinite-dimensional manifold $\mathcal{M}^{(\infty)}$ with coordinates $c^\alpha$, $\alpha \in [-\infty, +\infty]$. The~metric is as follows:
\begin{equation}\label{eq:infinite_metric}
    m_{\alpha_1 \dots \alpha_L} = m_{\alpha_1 \dots \alpha_L}(c),
\end{equation}
and the line element is
\begin{equation}\label{eq:infinite_line_element}
    \Delta s^L = \int_{-\infty}^{+\infty} d\alpha_1 \dots \int_{-\infty}^{+\infty} d\alpha_L \, m_{\alpha_1 \dots \alpha_L}(c) \, \Delta c^{\alpha_1} \dots \Delta c^{\alpha_L},
\end{equation}
where $m_{\alpha_1 \dots \alpha_L}$ is symmetric~\cite{Spivak:1965}. Extended to a functional metric,
\begin{equation}\label{eq:infinite_functional_metric}
    (\mathcal{M}^{(\infty)}, \mathcal{F}) \equiv \left( \mathcal{M}^{(\infty)}, \mathcal{F}_{\alpha_1 \dots \alpha_L}^{\beta_1 \dots \beta_U} : C_{\alpha_1} \otimes \dots \otimes C_{\beta_U} \to \mathbb{Q} \right),
\end{equation}
with line element
\begin{equation}\label{eq:infinite_functional_line_element}
	\hspace{-2cm}
    \Delta s^{U+L} = \int_{-\infty}^{+\infty} d\beta_1 \dots \int_{-\infty}^{+\infty} d\beta_U \int_{-\infty}^{+\infty} d\alpha_1 \dots \int_{-\infty}^{+\infty} d\alpha_L \, \mathcal{F}_{\alpha_1 \dots \alpha_L}^{\beta_1 \dots \beta_U}[c, f(c)] \, \Delta c^{\alpha_1} \dots \Delta c^{\alpha_L} \Delta c_{\beta_1} \dots \Delta c_{\beta_U},
\end{equation}
where $\mathcal{F}$ transforms as a $(U,L)$-tensor~\cite{Adler:1995}. {This aligns with Equation~(\ref{eq:Manifold_matric_pair_generalisation}), supporting infinite-dimensional and quaternion-valued metrics ($U, L$ arbitrary, codomain $\mathbb{Q}$) \cite{CANTATA:2021ktz}.}
\end{proof}

\begin{Example}[Infinite-Dimensional Euclidean and Minkowski Manifolds]
{The infinite-dimensional manifold--metric pair from Proposition~\ref{prop:infinite_dim} has line elements provided by Equation~(\ref{eq:2d_euclidean}) for 2D Euclidean space, Equation~(\ref{eq:4d_minkowski}) for 4D Minkowski space, and~(\ref{eq:infinite_functional_line_element}) for the generalized infinite-dimensional case, illustrating the progression to infinite dimensions ($U, L$ arbitrary, and codomain \linebreak  $\mathbb{Q}$) \cite{Spivak:1965, CANTATA:2021ktz}.}
\end{Example}



\subsection{Nick Early's Combinatorics~Argument}
{ In this section, we prove the generalization of a $D$-dimensional set to a $(D+1)$-dimensional set, and~the generalization of the two-rank tensor to a ($U,L$)-rank tensor.} Nick Early's combinatorics argument, inspired by his recent work with his collaborators~\cite{Cachazo:2019ngv}, suggests that given a projected set of complex number $\mathbb{C} \mathbb{P}^1$, a~complex number set of dimension 1, $\mathbb{C}^1$, and~a complex number set of dimensions 2, $\mathbb{C}$, then there is the relation between these three elements as follows:
\begin{align}
	\mathbb{C} \mathbb{P} = \mathbb{C}^{2} / \mathbb{C}^1_{\neq 0} ,
\end{align}
with a one line-proof that follows from
\begin{align}
	\mathbb{C} \mathbb{P}^1 = 	\mathbb{C} \mathbb{P}^{2-1} = \mathbb{C}^{2} / \mathbb{C}^1_{\neq 0} .
\end{align}

This relation implies that the property of the product of two elements, $a,b \in \mathbb{C}$, is related to the scaled product of these two elements, i.e.,~we write the following:
\begin{align}
(a,b) \sim (\lambda a, \lambda b) \equiv \lambda (a,b).
\end{align}

This argument basically proves the existence of a relation that is between one-dimensional and two-dimensional through a modulus~space. 

Then, the previous relation is generalized as follows. Given a projected set of complex number $\mathbb{C} \mathbb{P}^D$, a~complex number set of dimension $D$, i.e.,~the set $\mathbb{C}^{D}$, and~a complex number set of dimensions $D+1$, i.e.,~the set $\mathbb{C}^{D+1}$, then there is the relation between these three elements as follows:
\begin{align}
	\mathbb{C} \mathbb{P}^D = \mathbb{C}^{D+1}  / \mathbb{C}^1_{\neq 0} ,	
\end{align}
with a one line-proof that follows from
\begin{align}
	\mathbb{C} \mathbb{P}^D = \mathbb{C} \mathbb{P}^{D+1-1} = \mathbb{C}^{D+1}  / \mathbb{C}^1_{\neq 0}. 	
\end{align}

This argument basically proves the existence of a relation between the ($D$)-dimensional and ($D+1$)-dimensional sets through a modulus space. This argument supports the idea of generalizing the two-rank tensor to a ($U,L$)-rank~tensor.

\section{Conclusions}


This study has developed a comprehensive mathematical formalism for advanced manifold--metric pairs, significantly advancing the theoretical frameworks of geometry, topology, and~their applications in mathematical physics. Through a rigorous methodology grounded in mathematical construction proofs, logical foundations, and functional and tensor analysis, we have successfully constructed a diverse array of D-dimensional manifolds paired with corresponding metric spaces, incorporating higher-rank tensor metrics, complex and quaternionic codomains, and~probabilistic structures. Our key results include the formulation of advanced and/or generalized manifold--metric pairs that accommodate various dependences of systems, such as time-dependent scaling, as~exemplified in cosmological models like the Friedmann--Lemaitre--Robertson-Walker spacetime. Furthermore, we establish generalized metrizability for topological manifolds via the generalization of the Urysohn metrization theorem, ensuring compatibility with Euclidean topologies. Additionally, the~integration of information theory, entropy, and~probability into our framework has enabled the construction of innovative probabilistic and entropic manifold--metric pairs, such as the GIPESTMMM manifold, which offer novel insights into infinite-dimensional and quotient spaces with applications in theoretical physics and~cosmology.

The methodologies employed, including the use of propositions to systematically verify higher-rank tensor constructions and the application of a partition of unity to derive smooth, positive-definite global metrics, have provided a robust foundation for these advancements. These results not only enhance the understanding of manifold--metric interactions but also open new avenues for modeling complex physical systems across scales, from~astronomical to cosmological phenomena. The~incorporation of symmetry and positive-definiteness conditions ensures that our metrics preserve essential geometric properties, making them suitable for applications in general relativity and beyond. Furthermore, the~probabilistic and entropic extensions of our formalism pave the way for interdisciplinary applications, bridging mathematics with information theory and statistical mechanics. Finally, this study can be extended through the use of advanced tensor theories~\cite{sym17050777}.

In conclusion, this work establishes a versatile and unified framework that pushes the boundaries of traditional manifold theory, offering both theoretical rigor and practical applicability. Future research could extend these findings by exploring additional codomain structures, such as octonionic metrics, or~by applying the formalism to specific physical systems, such as quantum field theories or emergent spacetime models. We anticipate that this framework will serve as a cornerstone for further investigations into the interplay of geometry, topology, and~physics, fostering new discoveries in both theoretical and applied~domains.


\vspace{6pt}
\funding{This research received no external~funding.}


\informedconsent{Not applicable.}

\dataavailability{No new data were created or analyzed in this study. Data sharing is {not applicable to this article.}} 

\acknowledgments{The author would like to acknowledge his former teachers, professors, and~colleagues for their constant questioning and improving of authors' knowledge. The~authors would like also to thank Herbert Spohn, Jeume Gomis, and Nick Early for their valuable discussions that improved the presentation of this study. {Part of this study} 
was accomplished during the COVID-19 pandemic;~therefore, the authors would like to express their sincere gratitude to all the social, medical, political staff, as~well as their friends, which made the pandemic less~painful.}

\conflictsofinterest{The author declares no conflict of~interest.} 

\begin{adjustwidth}{-\extralength}{0cm}

\reftitle{References}

\PublishersNote{}
\end{adjustwidth}
\end{document}